\newtheorem{theorem}{Theorem}[section]
\newtheorem{lemma}[theorem]{Lemma}
\newtheorem{corollary}[theorem]{Corollary}
\theoremstyle{definition}
\newtheorem{definition}[theorem]{Definition}
\newtheorem{example}[theorem]{Example}
\theoremstyle{remark}
\newtheorem{remark}[theorem]{Remark}
\numberwithin{equation}{section}
\begin{document}
\setcounter{page}{1}

 \title[Decomposition of tracial positive maps]{Decomposition of tracial positive maps and   applications in quantum information}

\author[A. Dadkhah, M. Kian, M.S. Moslehian]{ Ali Dadkhah$^{1}$, Mohsen Kian$^{*2}$ \MakeLowercase{and} Mohammad Sal Moslehian$^{1}$}

\address{$1$\ Department of Pure Mathematics, Center Of Excellence in Analysis on Algebraic Structures (CEAAS), Ferdowsi University of Mashhad, P. O. Box 1159, Mashhad 91775, Iran}
\email{dadkhah61@yahoo.com}
\email{moslehian@um.ac.ir}

\address{$2$\ Department of Mathematics, University of Bojnord, P.O. Box 1339, Bojnord
94531, Iran}
\email{kian@ub.ac.ir}

\renewcommand{\subjclassname}{\textup{2020} Mathematics Subject Classification}
\subjclass[2020]{Primary 47C15, 46L10; Secondary 47A63, 46L05}

\keywords{Completely positive nonlinear map; multilinear map; decomposition; variance; covariance; quantum information; uncertainty relation.}

\maketitle
\begin{abstract}
Every positive multilinear map between $C^*$-algebras is separately weak$^*$-continuous. We show that the joint weak$^*$-continuity is equivalent to the joint weak$^*$-continuity of the multiplications of $C^*$-algebras under consideration. We study the behavior of general tracial positive maps on properly infinite von Neumann algebras and by applying the Aron--Berner extension of multilinear maps, we establish that under some mild conditions every tracial positive multilinear map between general $C^*$-algebras enjoys a decomposition $\Phi=\varphi_2 \circ \varphi_1$, in which $\varphi_1$ is a tracial positive linear map with the commutative range and $\varphi_2$ is a tracial completely positive map with the commutative domain. As an immediate consequence, tracial positive multilinear maps are completely positive. Furthermore, we prove that if the domain of a general tracial completely positive map $\Phi$ between $C^*$-algebra is a von Neumann algebra, then $\Phi$ has a similar decomposition.
As an application, we investigate the generalized variance and covariance in quantum mechanics via arbitrary positive maps. Among others, an uncertainty relation inequality for commuting observables in a composite physical system is presented.
\end{abstract}

\section{Introduction and Preliminaries}
Let $\mathscr{A}$ be a unital $C^*$-algebra with the unit $I$. A self-adjoint subspace $\mathcal{S}\subseteq \mathscr{A}$ containing $I$ is called an operator system. We use $\mathcal{S}^+$ and $\mathcal{S}^{++}$ to denote the sets of all positive and positive invertible elements of $\mathcal{S}$, respectively.
For $1\leq i\leq k$, let $\mathscr{A}_i$ be some unital $C^*$-algebra
 with the unit $I_i$. Then $\bigoplus_{i=1}^k \mathscr{A}_i $ stands for the direct sum of $\mathscr{A}_i$'s, which is a unital $C^*$-algebra with the unit $\textbf{I}=(I_1,\ldots, I_k)$. In the case when $\mathscr{A}_i=\mathscr{A}$ for all $1\leq i\leq k$, we simply denote $\bigoplus_{i=1}^k \mathscr{A}_i $ by $\mathscr{A}^{(k)}$ and use the notation $A^{(m)}$ for $m$-tuple $(A, \ldots, A)$. More generally, for a $C^*$-algebra $\mathscr{A}$, the direct sum $\bigoplus_{i=1}^\infty \mathscr{A}$, denoted by $\mathscr{A}^{\oplus_{i=1}^\infty}$, is the $C^*$-algebra of all $(A_1, A_2, \ldots)$ such that $\|(A_1, A_2, \ldots)\|:= \sup_i \|{A_i}\|$ is finite. If $\mathcal{S}_i\subseteq \mathscr{A}_i$ are some operator systems in the $C^*$-algebras $\mathscr{A}_i$ for $ 1\leq i \leq k$, then the direct sum $\bigoplus_{i=1}^k \mathcal{S}_i $ is an operator system in $\bigoplus_{i=1}^k\mathscr{A}_i $.
 Throughout the paper, $\mathbb{B}(\mathscr{H})$ stands for the $C^*$-algebra of all bounded linear operators on a complex Hilbert space $(\mathscr{H}, \langle\cdot,\cdot\rangle)$ with the unit $I$. Due to the Gelfand--Naimark--Segal theorem, any $C^*$-algebra $\mathscr{A}$ can be considered as a closed $*$-subalgebra of $\mathbb{B}(\mathscr{H})$ for some Hilbert space $\mathscr{H}$, and we say that $ \mathscr{A}$ acts on $\mathscr{H}$.\\
 A $C^*$-algebra $\mathscr{B}$ is called injective if for every unital $C^*$-algebra $\mathscr{A}$, every operator system $\mathcal{S}\subseteq \mathscr{A}$ and each completely positive linear map $\varphi: \mathcal{S} \to \mathscr{B}$, there exists a completely positive linear map
$\tilde{\varphi}: \mathscr{A} \to \mathscr{B}$ such that $\tilde{\varphi}|_\mathcal{S} =\varphi $ and $\|\tilde{\varphi}\|=\| \varphi\|$. In~particular, Arveson's extension theorem ensures that $\mathbb{B}(\mathscr{H})$ is injective for any Hilbert space $\mathscr{H}$; see \cite[Theorem 7.5]{Paulbook}. It is also known that for a compact Hausdorff space $\Omega$, the $C^*$-algebra $C(\Omega)$ is injective if and only if $\Omega$ is a Stonean space. \\
Two projections $P$ and $Q$ in a von Neumann algebra $\mathscr{M}$ are called equivalent if there exists an element $U\in \mathscr{M}$ such that
$P=U^*U$ and $Q=UU^*$. We write $P\sim Q$ when $P$ and $Q$ are equivalent. A projection $P$ is called
finite if $P\sim Q$ and $P\leq Q$ implies $P=Q$. Otherwise, $P$ is said to be infinite. A nonzero projection $P$ is called properly infinite if for every nonzero central projection $E\in \mathscr{M}$, the projection $EP$ is infinite. A von Neumann algebra is said to be finite or properly infinite if its identity has the corresponding property; see \cite[Chapter V, Definitions 1.15 and 1.16]{TAK1}.
 According to \cite[Chapter V, Proposition 1.23 (ii)]{TAK1}, if $\mathscr{M}$ is a commutative von Neumann algebra, then $M_n(\mathscr{M})$, the algebra of all $n\times n$ matrices with entries in $\mathscr{M}$, is a finite von Neumann algebra. In particular, $M_n(\mathbb{C})$ is a finite von Neumann algebra. \\
Let $\mathscr{B}$ be a unital $C^*$-algebra with the unit $I$. Then a multimap $\Phi : \bigoplus_{i=1}^k \mathcal{S}_i \to \mathscr{B} $ is called unital if $\Phi(I_1,\ldots,I_k) =I$ and is said to be positive when $\Phi\left(\bigoplus_{i=1}^k \mathcal{S}_i^+\right)\subseteq \mathscr{B}^+$. We say that $\Phi$ is tracial if $\Phi(\textbf{AB})=\Phi(\textbf{BA})$ for every $\textbf{A,B} \in \bigoplus_{i=1}^k \mathcal{S}_i$. A map $\Phi : \bigoplus_{i=1}^k \mathcal{S}_i \to \mathscr{B} $ is called multilinear if it is a $k$-linear map in the sense that $\Phi ( A_1, \ldots, \lambda A_j+B,\ldots, A_k )= \lambda \Phi ( A_1, \ldots, A_j,\ldots, A_k )+\Phi ( A_1, \ldots, B,\ldots, A_k )$ for all $\lambda \in \mathbb{C}$ and $1\leq j\leq k$. A map $\Phi : \mathscr{A} \to \mathscr{B}$ is said to be $(m, n)$-mixed homogeneous for some nonnegative integers $m$ and $n$ if $\Phi(zA)= z^m \bar{z}^n\Phi(A)$ for all $z\in \mathbb{C}$ and $A\in \mathscr{A}$. Throughout the paper, when a map or a multimap is not necessarily linear, multilinear, or $(m, n)$-mixed homogeneous, we write them simply map or multimap, respectively.

 The following lemma is useful in our work, which is an easy consequence of \cite[Theorem 1.3.3]{bht}; see also \cite{MKX}.
 \vspace{-.3cm}
\begin{lemma}\label{epsilpo}
 Let $A$ and $B$ be a positive element of a unital $C^*$-algebra. Then $\begin{bmatrix}
A & X \\ X^* & B
\end{bmatrix}\geq 0 $ if and only if $B\geq X^* (A+\varepsilon I)^{-1} X$ for all $\varepsilon>0$.
\end{lemma}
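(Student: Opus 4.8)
The plan is to reduce everything to the elementary fact that a $*$-congruence $M\mapsto T^*MT$ by an \emph{invertible} element $T$ of a $C^*$-algebra both preserves and reflects positivity, carried out inside the $C^*$-algebra $M_2(\mathscr{A})$. The only genuine subtlety is that $A$ itself need not be invertible, which is exactly why the $\varepsilon$-perturbation enters the statement; it plays the role that the (generalized) Schur complement with a range condition plays in the matrix version \cite[Theorem 1.3.3]{bht}.

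For the forward implication, suppose $\begin{bmatrix} A & X \\ X^* & B\end{bmatrix}\ge 0$ in $M_2(\mathscr{A})$. Adding the positive element $\begin{bmatrix}\varepsilon I & 0 \\ 0 & 0\end{bmatrix}$ shows $\begin{bmatrix} A+\varepsilon I & X \\ X^* & B\end{bmatrix}\ge 0$ for every $\varepsilon>0$, and now $P:=A+\varepsilon I\ge\varepsilon I$ is invertible. Conjugating by the invertible upper-triangular element $T=\begin{bmatrix} I & -P^{-1}X \\ 0 & I\end{bmatrix}$ — whose adjoint is $\begin{bmatrix} I & 0 \\ -X^*P^{-1} & I\end{bmatrix}$ because $P^{-1}$ is self-adjoint — a direct computation gives
\[
T^*\begin{bmatrix} P & X \\ X^* & B\end{bmatrix}T=\begin{bmatrix} P & 0 \\ 0 & B-X^*P^{-1}X\end{bmatrix},
\]
which is therefore positive; reading off its $(2,2)$-entry yields $B\ge X^*(A+\varepsilon I)^{-1}X$. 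As $\varepsilon>0$ was arbitrary, this direction is done.

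For the converse, assume $B\ge X^*(A+\varepsilon I)^{-1}X$ for all $\varepsilon>0$. With $P=A+\varepsilon I$ as above, the block-diagonal element $\begin{bmatrix} P & 0 \\ 0 & B-X^*P^{-1}X\end{bmatrix}$ is positive, and conjugating it by the invertible element $T^{-1}=\begin{bmatrix} I & P^{-1}X \\ 0 & I\end{bmatrix}$ recovers $\begin{bmatrix} A+\varepsilon I & X \\ X^* & B\end{bmatrix}\ge 0$. Finally, letting $\varepsilon\to 0^+$, the element $\begin{bmatrix} A+\varepsilon I & X \\ X^* & B\end{bmatrix}$ converges in norm to $\begin{bmatrix} A & X \\ X^* & B\end{bmatrix}$, and since the positive cone of $M_2(\mathscr{A})$ is norm-closed, the limit is positive. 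I do not anticipate any real obstacle; the only points requiring a little care are the norm passage $\varepsilon\to 0^+$ and the remark that self-adjointness of $P^{-1}$ makes $T$ a genuine $*$-congruence, so that positivity transfers in both directions.
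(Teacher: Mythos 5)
Your proof is correct. Note that the paper does not actually write out a proof of this lemma: it simply records it as ``an easy consequence of'' Bhatia's Theorem 1.3.3, i.e.\ the criterion that $\begin{bmatrix} A & X \\ X^* & B\end{bmatrix}\geq 0$ (with $A,B\geq 0$) exactly when $X=A^{1/2}KB^{1/2}$ for some contraction $K$, from which the $\varepsilon$-regularized Schur-complement form follows. Your route is different and self-contained: you regularize $A$ to $A+\varepsilon I$, diagonalize by the $*$-congruence with the invertible upper-triangular $T$, and use that congruence by an invertible element preserves and reflects positivity, together with norm-closedness of the positive cone to remove $\varepsilon$ in the converse. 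All the individual steps check out (the computation $T^*\begin{bmatrix}P & X\\ X^* & B\end{bmatrix}T=\begin{bmatrix}P & 0\\ 0 & B-X^*P^{-1}X\end{bmatrix}$ is right, a block-diagonal element is positive iff its blocks are, and compressing to the $(2,2)$-corner of a positive $2\times 2$ operator matrix gives a positive element). What your approach buys is independence from the contraction-factorization theorem; what the paper's citation buys is brevity. Either is acceptable here.
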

Hence, the positivity of the operator matrix $\begin{bmatrix} A& X\\ X^*& 0 \end{bmatrix}$ entails the vanishing of $X$. If ${A}$ is an $n\times n $ matrix and ${B}$ is an $m\times m$ matrix, then ${A}\oplus {B}:= \begin{bmatrix}
 {A} & \textbf{0}_{n\times m } \\ \textbf{0}_{m\times n} & {B}
 \end{bmatrix}$. It is easy to check that
 \[ {A}, {B} \geq 0 \Longleftrightarrow {A}\oplus {B} \geq 0.\] We use $X \circ Y$ to denote the Schur (Hadamard) product of $X$ and $Y$. According to \cite[Proposition 12]{shon} if the domain of a unital multilinear map $\Phi$ is a commutative $C^*$-algebra, then $\Phi$ is completely positive. In particular, if $\mathscr{A}$ is a unital commutative $C^*$-algebra, then the positive bilinear map $\pi : \mathscr{A} \bigoplus \mathscr{A} \to \mathscr{A}$ given by $\pi(A,B)=AB$ is completely positive. Therefore, the Schur product of each pair of positive matrices in $M_n(\mathscr{A})$ ($n\in \mathbb{N}$) is positive.
\subsection*{Complete Boundedness and $n$-positivity of Multimaps}
There are at least two known ways to define the complete boundedness and $n$-positivity of a multimap $\Phi : \bigoplus_{i=1}^k \mathcal{S}_i \to \mathscr{B} $. \\
\textbf{Type 1.} The first definition was presented in the works of Effros \cite{effros}, Christensen and Sinclair \cite{crist2, crist}, and Paulsen and Smith \cite{smith} as follows. To define the complete boundedness and the complete positivity, the definition is given only in the case when all domain $C^*$-algebras are the same. However, it can be extended to the general case; see \cite[Sec. 2]{joita1} and \cite[Sec. Introduction and 1.1]{crist}. For every $n\in \mathbb{N}$, a (multilinear) map $\Phi : \mathscr{A}^{(k)} \to \mathscr{B} $ induces a (multilinear) map $\Phi^n : M_n(\mathscr{A})^{(k)} \to M_n(\mathscr{B})$ by
\[ \Phi^n \left(A_1,\ldots, A_k \right)=\left[\sum_{l,r, s,\ldots,t=1}^n \Phi(A_{1_{il}}, A_{2_{lr}},\ldots, A_{k_{tj}})\right]_{i,j=1}^n,
\]
where $A_l=[A_{l_{ij}}] \in M_n(\mathscr{A})$, $l=1, \ldots, k$.
In the multilinear case, a map $\Phi : \bigoplus_{i=1}^k \mathscr{A}_i \to \mathscr{B} $ is called completely bounded if $\| \Phi\|_{cb}= \sup \{ \| \Phi^n\| : n \in \mathbb{N}\} <\infty$. If $k$ is an even natural number, then a multimap $\Phi: \mathscr{A}^{(k)} \to \mathscr{B}$ is called $n$-positive if $\Phi^n(A_1, \ldots, A_k) \geq 0$ for all $(A_1, \ldots, A_k)=(A_k^*, \ldots, A_1^*) \in M_n(\mathscr{A})^{(k)}$. In the case when $k$ is odd, $\Phi$ is said to be $n$-positive if $\Phi^n(A_1, \ldots, A_k) \geq 0$ for all $(A_1, \ldots, A_k)=(A_k^*, \ldots, A_1^*) \in M_n(\mathscr{A})^{(k)}$ with $A_m \geq 0$, where $A_m$ is the midpoint of $k$-tuple $(A_1, \ldots, A_k)$.
The map $\Phi$ is said to be completely positive of type 1 if $\Phi^n$ is positive for all $n\in\mathbb{N}$.\\
As an example, for any $C^*$-algebra $\mathscr{A}$, the map $\Pi : \mathscr{A}^{(k)} \to \mathscr{A}$ defined by $\Pi(A_1,\ldots, \\ A_k)=A_1 A_2 \ldots A_k$ is a completely positive multilinear map of type 1. It can be claimed that the definition of $n$-positivity of type 1 is the minimal condition that provides the complete positivity for multilinear maps such as $\Pi$ on general $C^*$-algebras.
A version of the Stinespring theorem for completely contractive multilinear maps of type 1 was obtained by Paulsen and Smith \cite{smith}. They also gave a correspondence between completely bounded multilinear maps of type 1 $\Phi : \bigoplus_{i=1}^k \mathcal{S}_i \to \mathscr{B} $ and completely bounded linear maps $\Phi : \bigotimes_{i=1}^k \mathcal{S}_i \to \mathscr{B} $
on the tensor products endowed with the Haagerup norm; see \cite[Propositions 1.3 and 3.1]{smith}. Some other important studies on Stinespring's theorem on Hilbert $C^*$-modules and $C^*$-algebras in the setting of multilinear maps, concerning the definitions of complete positivity of type 1 and complete boundedness of type 1, can be found in \cite{Bhat2, heo1, heo2,  joita1}. Furthermore, some covariant representations for a symmetry semigroup $C^*$-dynamical system were presented in \cite{bel1, bel2}.\\
\textbf{Type 2.} In accordance with the usage of linear cases, a multimap $\Phi : \bigoplus_{i=1}^k \mathcal{S}_i \to \mathscr{B} $ is called $n$-positive of type 2 if the induced map $\Phi_n : \bigoplus_{i=1}^k M_n \left(\mathcal{S}_i\right) \to M_n\left( \mathscr{B}\right) $ given by $ \Phi_n\left(\left[A_{ij}^1\right],\ldots, \left[A_{ij}^k\right]\right)= \left[ \Phi\left(A_{ij}^1,\ldots, A_{ij}^k\right)\right]_{i,j=1}^n$ is positive. We say that $\Phi$ is completely positive of type 2 if $\Phi_n$ is positive for all $n\in\mathbb{N}$. In the framework of positive maps between $C^*$-algebras, this definition was first presented in the work of Ando and Choi \cite{CHOICOM}. They showed that every completely positive map of type 2 is uniquely decomposed to completely positive mixed homogeneous maps of type 2. It allows us to reduce the study of completely positive maps of type 2 to completely positive mixed homogeneous maps and completely positive multilinear maps. Moreover, they gave a Stinespring type representation for linear completely positive maps of type 2 between $C^*$-algebras; see \cite[Theorems 5, 6, and 7]{CHOICOM}. The authors of \cite{HIAI} showed that every completely positive multilinear map $\varphi :\bigoplus_{i=1}^k \mathscr{A}_i \to \mathscr{B}$ of type 2 between $C^*$-algebras corresponds to a completely positive linear map $\rho :\bigotimes_{i=1}^k \mathscr{A}_i \to \mathscr{B}$ with respect to the projective tensor product  (i.e., $C^*$-tensor product with respect to the largest $C^*$-crossnorm). Dong \cite{shon} considered the definition of $n$-positivity of type 2 and generalized some properties of $n$-positive linear maps for $n$-positive and completely positive multilinear maps. Recently, Dehghani, the second author, and Seo \cite{DKS} proved a Choi--Davis--Jensen type inequality for positive multilinear maps and submultiplicative operator convex functions. \\
We give some examples to show that the definitions of $n$-positivity of type 1 and of type 2 are generally independent of each other.
\begin{example}
Consider the map $\varphi : M_2(C[0,1])^{(4)} \to M_2(C[0,1])$ given by
\[\varphi (A_1, A_2, A_3, A_4) = A_1 \circ A_2 \circ A_3 \circ A_4,\]
where $C[0,1]$ is the unital $C^*$-algebra of all complex valued continuous functions on $[0,1]$ with the identity ${1}$.
Since the Hadamard product of positive matrices with entries in a commutative unital $C^*$-algebra is a positive matrix, it is easy to check that $\varphi$ is a completely positive multilinear map of type 2. However, we show that $\varphi$ is not even a positive map of type 1. Let
\[ A_1= \begin{bmatrix}
x & x \\ {1} & 0
\end{bmatrix}, \qquad A_2= \begin{bmatrix}
{1} & {1} \\ {1} & {1}
\end{bmatrix}, \qquad A_3= \begin{bmatrix}
{1} & {1} \\ {1} & {1}
\end{bmatrix}, \qquad A_4= \begin{bmatrix}
x & {1}\\ x & 0
\end{bmatrix}.
\]
Then, trivially, $(A_1, A_2, A_3, A_4)= (A_4^*, A_3^*, A_2^*, A_1^*)$. However,
$$\varphi (A_1, A_2, A_3, A_4)= \begin{bmatrix}
x^2 & x \\ x & 0
\end{bmatrix} \not \geq 0.$$
\end{example}

 As mentioned earlier, if $\mathscr{A}$ is a $C^*$-algebra, then the map $\Pi : \mathscr{A}^{(k)} \to \mathscr{A}$ defined by $\Pi(A_1, \ldots, A_k)=A_1 A_2 \ldots A_k$ is a completely positive multilinear map of type 1. However, if $\mathscr{A}$ is a noncommutative $C^*$-algebra and $k\geq 2$, then $\Pi$ is not a positive map of type 2.

Finally, we present an easy example to show that the $n$-positivity of type 1, at least in the setting of nonmultilinear maps, can be a strong condition on a multimap. Let $k$ be an even natural number. Consider the multimap $\Lambda: \mathscr{A}^{(k)} \to \mathscr{A}^{(k-1)}$ given by $\Lambda (A_1, \ldots, A_k)=(A_1, \ldots, A_{k-1})$. Then $\Lambda$ is a completely positive linear map of type 2. However, $\Lambda$ is not even a positive map of type 1. Indeed, we have $(-I, I, \ldots, I, -I)= (-I^*, I^*, \ldots, I^*, -I^*)$, but
 $\Lambda(-I, I, \ldots, I, -I)=(-I, I, \ldots, I)\not \geq 0$.

 Accordingly, since the main part of our purpose is to study of (not necessarily linear or multilinear) positive maps, we consider the definition of $n$-positivity of type 2, and for simplicity, we use the terms ``$n$-positive'' and ``completely positive'' instead of ``$n$-positive of type 2'' and ``completely positive of type 2", respectively.

 The main goal of this paper is to investigate (not necessarily linear or multilinear) positive maps between $C^*$-algebras and to explore applications of positive maps in quantum information theory.

The preliminary Section 2 is devoted to some examples and basic properties of positive maps. The relationships between completely positive maps, completely positive multilinear maps, and completely positive $(m, n)$-mixed homogeneous maps are reviewed.

In Section 3, we prove that any positive multilinear map between $C^*$-algebras is separately weak$^*$-continuous, while the joint weak$^*$-continuity is equivalent to a condition on the domain $C^*$-algebras, that is the joint continuity of the multiplications. The separately weak$^*$-continuous extension of multilinear maps enables us to present a decomposition for tracial multilinear between general $C^*$-algebras. Moreover, we demonstrate decompositions of tracial completely positive maps between $C^*$-algebras.

In Section 4, we develop some classical uncertainty relations by considering some classes of positive maps between $C^*$-algebras. Among other results, some uncertainty relation inequalities for commuting observables in a composite physical system are given: if $ \Phi: \mathscr{A}\bigotimes \mathscr{B} \to \mathscr{C}$ is a unital positive linear map between unital $C^*$-algebras and $A\otimes I_\mathscr{B}$ and $ I_ \mathscr{A} \otimes B$ are compatible observables in the composite system, then
 \[ {\rm Var}_\Phi(A\otimes I_\mathscr{B}) {\rm Var}_\Phi(I_\mathscr{A}\otimes B) \geq \frac{1}{16} \frac{ \left|\Phi([A, C]) \Phi([B,D])\right|^2}{(\|C-\alpha I_\mathscr{A}\| \|D-\beta I_\mathscr{B}\|)^2 }\]
 for all self-adjoint elements $ C\in \mathscr{A}$ and $D\in\mathscr{B}$ and every $\alpha, \beta \in \mathbb{C}$ with nonzero $ \|C-\alpha I_\mathscr{A}\|$ and $\|D-\beta I_\mathscr{B}\|$.

\section{Basic Properties and Examples}
In this section, we present some examples of positive maps between $C^*$-algebras. Moreover, we express the relationship between completely positive and completely positive multilinear maps.

It is known from \cite[Theorem 5.1]{Hiai} that if $\alpha\geq n-2$, then the power function $\phi_\alpha:\mathbb{R}\to \mathbb{R}$ given by $\phi_\alpha(x)= |x|^\alpha$ is an $n$-positive map. In the other word, if $[x_{ij}]_{i,j=1}^n$ is a positive matrix and $\alpha\geq n-2$, then the matrix $[|x_{ij}|^\alpha]_{i,j=1}^n$ is positive. In addition, according to \cite[Theorems 2.2 and 2.4 ]{Fitz}, if $\alpha $ is not an integer, then the condition $\alpha \geq n-2$ for $\phi_\alpha$ is sharp. Moreover, if $[x_{ij}]_{i,j=1}^n \in M_n(\mathbb{C})$ is a positive matrix, then $[\bar{x}_{ij}]_{i,j=1}^n \in M_n(\mathbb{C})^+$. Thanks to the Schur product theorem, we have $[|{x}_{ij}|^2]_{i,j=1}^n \in M_n(\mathbb{C})^+$. Therefore, if $\alpha\geq n-2$, then the map $\varphi_\alpha:\mathbb{C}\to \mathbb{C}$ defined by $\varphi_\alpha(x)= |x|^{2\alpha}$ is an $n$-positive map. We are ready to give the following examples.
\begin{example}\label{exam1}
 If $\alpha_1, \ldots, \alpha_k $ are not integers, then for every $1\leq i\leq k$ consider the maps $\phi^{\alpha_i}: M_m(\mathbb{R}) \to M_m (\mathbb{R})$ given by $\phi^{\alpha_i}\left(\left[x_{ij}\right]_{i,j=1}^m\right)= \Big[\left|x_{i,j}\right|^{\alpha_i}\Big]_{i,j=1}^m$. According to what we said above, for every $1\leq i\leq k$, the multimap $\phi^{\alpha_i}$ is $n$-positive if and only if $\alpha_i \geq nm-2$. Now define the map $\Phi: M_m(\mathbb{R})^{(k)} \to M_m (\mathbb{R})$ by \[\Phi \left(\left[x_{ij}^1\right]_{i,j=1}^m, \ldots, \left[x_{ij}^k\right]_{i,j=1}^m\right)= \phi^{\alpha_1}\left( \left[x_{i,j}^1\right]_{i,j=1}^m\right) \circ \cdots \circ \phi^{\alpha_k} \left(\left[x_{i,j}^k\right]_{i,j=1}^m\right).\] One observes that $\Phi$ is $n$-positive if and only if ${\rm min} \{ \alpha_i : 1\leq i\leq k\} \geq nm-2$.
\end{example}
\begin{example}
Let $\alpha_1, \ldots, \alpha_k $ be some positive real numbers. Then for every $1\leq i\leq k$, consider the maps $\varphi^{\alpha_i}: M_m(\mathbb{C}) \to M_m (\mathbb{C})$ given by $\varphi^{\alpha_i}\left(\left[x_{ij}\right]_{i,j=1}^m\right)= \Big[\left|x_{i,j}\right|^{2\alpha_i}\Big]_{i,j=1}^m$. Due to the descriptions in the paragraph preceding Example \ref{exam1}, if $\alpha_i \geq nm-2$ ($1\leq i\leq k$), then the multimap $\varphi^{\alpha_i}$ is an $n$-positive map, $1\leq i\leq k$. Let us define the map $\Psi: M_m(\mathbb{C})^{(k)} \to M_m (\mathbb{C})$ by \[\Psi \left(\left[x_{ij}^1\right]_{i,j=1}^m, \ldots, \left[x_{ij}^k\right]_{i,j=1}^m\right)= \varphi^{\alpha_1}\left( \left[x_{i,j}^1\right]_{i,j=1}^m\right) \circ \cdots \circ \varphi^{\alpha_k} \left(\left[x_{i,j}^k\right]_{i,j=1}^m\right).\] If ${\rm min} \{ \alpha_i : 1\leq i\leq k\} \geq nm-2$, then $\Psi$ is an $n$-positive multimap. In particular, if $\alpha_1, \ldots, \alpha_k $ are natural numbers, then $\Psi$ is a completely positive $(\alpha_1\times \cdots \times \alpha_k, \alpha_1\times \cdots \times \alpha_k)$-mixed homogeneous map.
\end{example}

\begin{example}
Let the multilinear map $\Phi:{M}_{m_1}(\mathbb{C})\times \cdots \times {M}_{m_k}(\mathbb{C})\to {M}_{m_1 \times \cdots \times m_k}(\mathbb{C})$ be defined by $\Phi(A_1, \ldots, A_k)=A_1\otimes \cdots \otimes A_k$. Then $\Phi$ is a completely positive multilinear map. As another example, it can be seen that the multilinear map $\Psi:{M}_m^{(k)}(\mathbb{C})\to\mathbb{C}$ given by $\Psi(A_1, \ldots, A_k)=\mathrm{Tr} A_1 \cdots \mathrm{Tr} A_k$ is a tracial completely positive multilinear map.

The map $\Theta: M_2(\mathbb{C})^{(2)} \to M_2(\mathbb{C}) $ defined by $ \Theta(A,B)=A^T\otimes B^T$ is positive and multilinear. However, $\Theta$ is not $2$-positive. In fact, consider the positive matrix $E=\left[\begin{matrix}
 E_{11} & E_{12} \\
 E_{21} & E_{22}
 \end{matrix}\right]$,
 where $E_{ij}$'s are the matrix units in $M_2(\mathbb{C})$. However, $\Theta_2(E,E)$ is not positive; we have
 \begin{align*}
 \Theta_2(E,E)=\left[\begin{matrix}
 \Theta(E_{11},E_{11}) & \Theta(E_{12},E_{12}) \\ \Theta(E_{21},E_{21}) & \Theta(E_{22},E_{22}) \end{matrix}\right]=\left[\begin{matrix}
 E_{11}\otimes E_{11} & E_{21}\otimes E_{21}\\ E_{12}\otimes E_{12} & E_{22}\otimes E_{22}
 \end{matrix}\right]\ngeq0.
 \end{align*}
 \end{example}

\subsection{Some Basic Properties}
It is known from \cite[Proposition 12]{shon} that if $\mathscr{A}_i$, $1\leq i\leq k$, are commutative unital $C^*$-algebras and $\mathscr{B}$ is a unital $C^*$-algebra, then every unital positive multilinear map $\Phi : \bigoplus_{i=1}^k\mathscr{A}_i \to \mathscr{B} $ is completely positive. Hence,
if $\textbf{A}=(A_1,\ldots, A_k)$ is a normal element, then the restriction of $\Phi$ on $C^*(\textbf{A}, \textbf{I})$, the $C^*$-algebra generated by $\textbf{A}$ and $\textbf{I}$, is completely positive.
Therefore, the positivity of the matrix
\[ \begin{bmatrix}
\textbf{A}^* \textbf{A} & \textbf{A}^*\\
\textbf{A} & \textbf{I}
\end{bmatrix}\in \bigoplus_{i=1}^k M_2(C^*(\textbf{A}, \textbf{I})) \]
 and the complete positivity of $\Phi$ on $C^*(\textbf{A}, \textbf{I})$ imply that
 \[ \begin{bmatrix}
\Phi(\textbf{A}^* \textbf{A}) & \Phi( \textbf{A}^*)\\
\Phi( \textbf{A}) & \Phi(\textbf{I})
\end{bmatrix}\geq 0.\]
 Hence, by applying Lemma \ref{epsilpo}, we arrive at the following Choi type inequality:
 \begin{align}\label{kadison}
 \Phi(\textbf{A}^*\textbf{A})\geq \Phi(\textbf{A}^*)\Phi(\textbf{A}).
 \end{align}
 In particular, if $\textbf{A}$ is a self-adjoint operator, then
 \begin{align}\label{kadison2}
 \Phi(\textbf{A}^2)\geq \Phi(\textbf{A})^2.
 \end{align}
Furthermore, if $\Phi : \bigoplus_{i=1}^k \mathscr{A}_i \to \mathscr{B} $ is a $2$-positive map, then inequality \eqref{kadison} is also valid for arbitrary elements in $\bigoplus_{i=1}^k \mathscr{A}_i$.\\
A positive multilinear map is clearly self-adjoint (see \cite[Lemma 1]{shon}), but the converse is not true in general. Moreover, every positive multilinear map is bounded; see \cite{Bh,DM3}.
However, if $\bigoplus_{i=1}^k \mathscr{A}_i$ is unital  $\Phi : \bigoplus_{i=1}^k \mathscr{A}_i \to \mathscr{B} $ is $2$-positive, then it is self-adjoint. Indeed, it is easy to check that for every $\textbf{A}=(A_1,\ldots, A_k)\in \bigoplus_{i=1}^k \mathscr{A}_i$, the matrix
\[ \begin{bmatrix}
\|\textbf{A}\| \textbf{I} &  \textbf{A}^*\\
 \textbf{A} & \|\textbf{A}\| \textbf{I}
\end{bmatrix}\in \bigoplus_{i=1}^k M_2(\mathscr{A}_i) \]
is positive. Hence, $2$-positivity of $\Phi$ ensures that
\[ \begin{bmatrix}
\Phi\left(\|\textbf{A}\| \textbf{I}\right) & \Phi\left( \textbf{A}^*\right)\\
\Phi\left( \textbf{A}\right) & \Phi\left(\|\textbf{A}\| \textbf{I}\right)
\end{bmatrix}\geq 0, \]
 which immediately implies $\Phi\left( \textbf{A}^*\right) = \Phi\left( \textbf{A}\right)^*$. \\
 The monotonicity of positive multilinear maps on positive elements can be easily proved. However, for positive multimaps, the positivity is not enough to imply the monotonicity. For example, the map $\Phi : M_2(\mathbb{C})^{(k)} \to \mathbb{C}$ given by
 \[\Phi([x_{ij}^1],\ldots, [x_{ij}^k] )=\begin{cases}\| ([x_{ij}^1],\ldots, [x_{ij}^k] )\| & \text{if\ } ([x_{ij}^1],\ldots, [x_{ij}^k] )\leq \textbf{I}, \\ 0 & \text{otherwise}, \end{cases}\] is a positive multimap, which is not monotone. However, if $\Phi :\mathscr{A} \to \mathscr{B}$ is a $2$-positive multimap between $C^*$-algebras, then it is monotone on positive elements. Indeed, it follows from \cite[Inequality 4]{DM3} that
 \[ A\geq B\geq 0 \Longleftrightarrow \begin{bmatrix}
 A & B \\ B & B
 \end{bmatrix}\geq 0.\]
 Hence, the $2$-positivity of $\Phi$ immediately shows the assertion to be held. More generally, it is also ensures that every $2n$-positive map $\Phi: \mathscr{A} \to \mathscr{B}$ between $C^*$-algebras is $n$-monotone (i.e., $\Phi$ is monotone on $M_n(\mathscr{A})^+$); see \cite{DM3, DMK3, Nagisa} for more details.

\subsection{An Overview to Nonlinear Completely Positive Maps}\label{overviewnonlinear}
An interesting and important result on completely positive maps between $C^*$-algebras is a result of Ando and Choi \cite[Theorem 2]{CHOICOM} as follows. Let $\Phi:\mathscr{A} \to \mathscr{B}$ be a completely positive map between $C^*$-algebras.
Then there are completely positive $(m,n)$-mixed homogeneous maps
 $\Phi_{m,n} : \mathscr{A}\to \mathscr{B}$ ($m,n= 0, 1, \ldots $) such that
\begin{equation}\label{namayeshcom}
\Phi (A)= \sum_{m=0}^\infty \sum_{n=0}^\infty \Phi_{m,n}(A) \qquad (A\in \mathscr{A}),
\end{equation}
where the convergence of the series is in the operator norm topology on $\mathscr{B}$. Also, the structure of $\Phi_{m,n}$'s reads as follows.
For every $A\in \mathscr{A}$, the map $F : \mathbb{C} \to \mathscr{B}$ defined by $F(z)=\Phi(zA)$ is infinitely
differentiable and
\begin{equation}
\Phi_{m,n}(A)=\frac{1}{m! n!} \frac{\partial^{m+n}}{\partial^m z \partial^n \bar{z}} \Phi(zA)|_{z=0} \qquad (z\in \mathbb{C}).
\end{equation}
The structure of $\Phi_{m,n}$'s ensures that numerous properties of $\Phi$ can be established for $\Phi_{m,n}$'s as well. For instance, if $\Phi$ is tracial, so are $\Phi_{m,n}$'s ($m,n= 0, 1, \ldots $). Moreover, $\Phi_{0,0}(A)=\Phi(0)$.\\
As mentioned in \cite[Theorem 1.4]{HIAI} and \cite[Theorem 5]{CHOICOM}, if $\Phi_{m.n}:\mathscr{A} \to \mathscr{B}$ is a completely positive $(m,n)$-mixed homogeneous map with $m+n>0$, then there is a completely positive $(m+n)$-linear map ${\Phi}^{(m,n)}: \mathscr{A}^{(m)}\bigoplus \bar{\mathscr{A}}^{(n)} \to \mathscr{B}$ such that
\begin{equation}\label{homomulti}
\Phi_{m,n}(A)= \Phi^{(m,n)}(A^{(m)}, \bar{A}^{(n)}),
\end{equation}
where $\bar{\mathscr{A}}$ is the $C^{*}$-algebra conjugate to $\mathscr{A}$. Recall from \cite[Section 1]{HIAI} that the $C^{*}$-algebra $\bar {\mathscr{A}}$ conjugated to $\mathscr{A}$ is defined by the
same underlying set as $\mathscr{A}$ and as the same as addition, multiplication, involution,
and norm of $\mathscr{A}$. If we denote by $\bar{X}$ the
element in $\bar{\mathscr{A}}$ corresponding to $X\in \mathscr{A}$, then the scalar multiplication from $ \mathbb{C} \times \bar{\mathscr{A}}$ to $\bar{\mathscr{A}}$ is defined by $(z, \bar{A})\mapsto \overline{\bar{z}A}$, where $\bar{z}$ means the complex conjugate of $z$. The map $A \mapsto\bar{A}$ from $\mathscr{A}$ to $\bar {\mathscr{A}}$ is a conjugate linear
isomorphism and therefore is a completely positive map.
The above discussion together with the $(m,n)$-mixed homogeneous representation \eqref{namayeshcom} for completely positive maps between $C^*$-algebras show that the study of completely positive maps can be reduced to that of completely positive multilinear maps. This is a motivation for our investigation.

\section{Decomposition of Tracial Positive Maps }
 Let $X_1, \ldots, X_k, Y$ be Banach spaces and let $\Gamma: X_1\times X_2\times \cdots \times X_k \to Y$ be a bounded multilinear map. Then there exists a multilinear extension of $\Gamma$ to a map $\tilde{\Gamma}: X_1^{**}\times X_2^{**}\times \cdots \times X_k^{**} \to Y^{**}$, which is called the Aron--Berner extension of $\Gamma$. In the case when $X_1, \ldots, X_k$ are $C^*$-algebras and $Y$ is dual space of a complex Banach space, Johnson, Kadison, and Ringrose \cite{johnson} showed that the Aron--Berner extension of a bounded separately ultraweak-weak$^*$-continuous multilinear map $\Gamma$ is unique, separately ultraweak-weak$^*$-continuous and preserves the norm of $\Gamma$. We refer the interested reader to \cite{bombal, maitl, Pera} for more information. Although the goal of this paper is not to study the mentioned subject, however, they play key roles in obtaining some results of our special interest. In particular, Theorem \ref{th-e3} shows that, for acquiring the joint weak$^*$-continuity instead of separate weak$^*$-continuity, a strong condition on the multiplication in the domain $C^*$-algebras would be needed.
Recall that, according to the Goldstein theorem, every Banach space $X$ can be considered as a dense subspace of $X^{**}$ with respect to the weak$^*$-topology, that is, $\sigma(X^{**}, X^*)$.
\begin{lemma}\label{arbexte}
 Let $\mathscr{A}_1, \cdots, \mathscr{A}_k $ be unital $C^*$-algebras and $\mathscr{B}$ be a $C^*$-algebra. Then every positive multilinear map $\Phi:\bigoplus_{i=1}^k \mathscr{A}_i \to \mathscr{B}$ is separately weak$^*$-continuous. That is, for every net $\{ A_{\alpha}\} \subseteq \mathscr{A}_j$ $(1\leq j\leq k)$ and all $C_i \in \mathscr{A}_i$ $(i\neq j,\ 1\leq i\leq k$), it holds that
\[ A_{\alpha} \xrightarrow{{\rm weak}^*} A\quad \Longrightarrow \quad \Phi(C_1, \ldots, A_{\alpha}, \ldots, C_k) \xrightarrow{{\rm weak}^*} \Phi(C_1, \ldots, A, \ldots, C_k). \]
\end{lemma}
\begin{proof}
 Since positive linear maps are bounded, the conclusion is valid in the case when $k=1$. This immediately implies that every positive multilinear map $\Psi : \bigoplus_{i=1}^k \mathscr{A}_i \to \mathscr{B}$ is separately weak$^*$-continuous.
For the sake of convenience, we shortly illustrate it as follows. Fix $j\in \{1, 2,\ldots, k\}$. For every $1\leq i\leq k$ with $ i\neq j$, let $C_i\in\mathscr{A}_i$. Let the map $\Phi_{(j)} : \mathscr{A}_j \to \mathscr{B}$ be defined by \[\Phi_{(j)}(X) = \Phi(C_1, \ldots, C_{j-1}, X, C_{j+1}, \ldots, C_k).\] Due to the multilinearity of $\Phi$, without loss of generality, we may assume that $C_i \in \mathscr{A}_{i}^+$ ($ 1\leq i\leq k, i\neq j$). Accordingly, $\Phi_{(j)}$ would be a positive linear map, which is certainly bounded. Consequently, for every $f\in \mathscr{B}^*$, it holds that $f\circ \Phi_{(j)} \in\mathscr{A}_j^*$. Therefore, the assumption $ A_{\alpha} \xrightarrow{{\rm weak}^*} A$ implies that
\small{
\begin{equation*}
f\circ \Phi\left(C_1, \ldots,A_{\alpha}, \ldots, C_k\right)=f\circ \Phi_{(j)}\left(A_{\alpha}\right) \to f\circ \Phi_{(j)}\left(A\right)=f\circ \Phi\left(C_1, \ldots, A, \ldots, C_k\right)
\end{equation*}}
 for every $f\in \mathscr{B}^*$, as we claimed. \\
\end{proof}

A general extension theorem for multilinear operators was presented by Bombal et al. \cite[Theorem 1]{bombal} as follows: Let $X_1, \ldots, X_k, Y$ be Banach spaces, and let $\Gamma: X_1\times X_2\times \cdots \times X_k \to Y$ be a bounded multilinear map such that for all $i \neq j$; $1\leq i, j\leq k$, every
bounded linear map $T: X_i \to X_j^*$ is weakly compact (i.e., $T$ maps the unit sphere of $X_i$ into a weakly relatively compact set in $X_j^*$).
Then there is a unique bounded multilinear map $\tilde{\Gamma}: X_1^{**}\times X_2^{**}\times \cdots \times X_k^{**} \to Y^{**}$ which extends ${\Gamma}$ and is separately weak$^*$-continuous. Moreover, $\|\Gamma\|= \|\tilde{\Gamma}\|$. Furthermore, according to \cite[Corollary 2]{bombal} if either $\Gamma$ is a weakly compact operator or every bounded linear map from $X_i$ into $Y$ is weakly compact, then $\tilde{\Gamma}$ is $Y$-valued. The existence part of the next result is known (see \cite[Theorem 3.2]{johnson} and \cite[Lemma 3.1]{maitl}). We prove it by employing the following lemma due to Argerami \cite{Argerami} for the completeness of discussion.

\begin{lemma}\label{ARGE}
 $\mathscr{A}^+$ is a weak$^*$-dense subset of ${\mathscr{A}^{**}}^+$ for any $C^*$-algebra $\mathscr{A}$.
\end{lemma}
\begin{proof}
It is enough to show that $S_\mathscr{A}^+$ is weak$^*$-dense in $S_\mathscr{A^{**}}^+$ where $S_\mathscr{X}^+= \{ X \in \mathscr{X} : \|X\|=1 , X\geq 0\}$. Let $\mathscr{A}^{''}$ be the enveloping von Neumann algebra of $\mathscr{A}$. By the Sherman--Takeda theorem \cite[Chapter III, Theorem 2.4]{TAK1}, $\mathscr{A}^{**}$ with respect to the weak$^*$-topology can be identified with $\mathscr{A}^{''}$ with respect to $\sigma$-weak topology and the identification preserves the positivity. According to \cite[Theorem 1.7.3]{Davidson}, $S_\mathscr{A}^+$ is strong operator-dense in $S_\mathscr{A^{''}}^+$, which ensures $S_\mathscr{A}^+$ is weak operator-dense in $S_\mathscr{A^{''}}^+$. The weak operator topology and $\sigma$-weak topology coincide on the unit ball, so $S_\mathscr{A}^+$ is $\sigma$-weak-dense in $S_\mathscr{A^{''}}^+$. Therefore, the mentioned identification implies that $S_\mathscr{A}^+$ is weak$^*$-dense in $S_\mathscr{A^{**}}^+$.
\end{proof}

\begin{lemma}\label{extension1}
Every positive multilinear map $ \Phi: \bigoplus_{i=1}^k \mathscr{A}_i \to \mathscr{B}$ between unital $C^*$-algebras has a unique, separately weak$^*$-continuous extension $\tilde{\Phi}: \bigoplus_{i=1}^k \mathscr{A}_i^{**} \to \mathscr{B}^{**}$, such that $\|\Phi\|=\|\tilde{\Phi}\|$ and $\tilde{\Phi}$ is positive and multilinear. Moreover, if either $\mathscr{B}$ is a von Neumann algebra or $\Phi$ is weakly compact, then $\tilde{\Phi}$ is $\mathscr{B}$-valued.
\end{lemma}
\begin{proof}
Due to \cite[Corollary II.9]{Akemann}, every bounded linear map from a $C^*$-algebra to the predual of a von Neumann algebra is weakly compact. Hence, every bounded linear map form $\mathscr{A}_i$ to $\mathscr{A}_j^*$ is weakly compact for $1\leq i,j\leq k$. Now the conditions of \cite[Theorem 1]{bombal} (mentioned in the above paragraph) are fulfilled for the multilinear map $\Phi: \bigoplus_{i=1}^k \mathscr{A}_i \to \mathscr{B}$. Therefore, the existence of a unique separately weak$^*$-continuous extension $\tilde{\Phi}: \bigoplus_{i=1}^k \mathscr{A}_i^{**} \to \mathscr{B}^{**}$ with $\|\tilde{\Phi}\|=\|\Phi\|$ is guaranteed.

 The Goldstine theorem follows that if $\mathscr{X}$ is a Banach space, then $\mathscr{X}$ is a weak$^*$-dense subset of $\mathscr{X}^{**}$. This ensures that for every $j=1,\dots,k$ and every $A_j \in \mathscr{A}_j^{**} $, there exists a net $A_{\alpha^{(j)}} $ in $ \mathscr{A}_j$ that is weak$^*$-convergent to $A_j$.
By the construction \cite[Theorem 1]{bombal}, we can define the map $\tilde{\Phi}$ from $\bigoplus_{i=1}^k\mathscr{A}_i^{**}$ to $\mathscr{B}^{**}$ by the iterated limits as follows:
\[(A_1, \ldots, A_k)\mapsto {\rm weak}^*-\lim_{\alpha^{(1)}}\cdots {\rm weak}^*-\lim_{\alpha^{(k)}}\Phi(A_{\alpha^{(1)}},\ldots, A_{\alpha^{(k)}})\]
that is the separately weak$^*$-continuous multilinear extension of $\Phi$. It follows from Lemma \ref{ARGE} that  $\tilde{\Phi}$ is positive.  In the case when $\Phi$ is weakly compact or $\mathscr{B}$ is a von Neumann algebra, the assertion follows from \cite[Corollary 2]{bombal} by using \cite[Corollary II.9]{Akemann}.
 \end{proof}

The next theorem shows that the joint weak$^*$-continuity of positive multilinear maps requires a strong condition on the domain $C^*$-algebras.
\begin{theorem}\label{th-e3}
 Let $\Omega$ be a set of unital $C^*$-algebras and $\mathscr{B}$ be a $C^*$-algebra. Then the followings are equivalent:
 \begin{enumerate}
 \item Every positive multilinear map $\Phi:\bigoplus_{i=1}^k \mathscr{A}_i \to \mathscr{B}$ is jointly weak$^*$-continuous in which $\mathscr{A}_1, \cdots, \mathscr{A}_k \in \Omega$.
 \item The multiplication is a jointly weak$^*$-continuous operation in every $\mathscr{A}\in \Omega$.
 \item If $\{A_{\alpha}\}_\alpha $ is a net in $\mathscr{A}$ for some $\mathscr{A}\in \Omega$, then
 \[ A_{\alpha} \xrightarrow{{\rm weak}^*} A \qquad \Longrightarrow \qquad |A_{\alpha}|^2 \xrightarrow{{\rm weak}^*} |A|^2.\]
 \end{enumerate}
\end{theorem}
\begin{proof}
 We show that (1) $\Longrightarrow$ (2)$\Longrightarrow$ (3) $\Longrightarrow$ (1).

 (1) $\Longrightarrow$ (2). Let $\mathscr{A} \in \Omega$ and $\{A_{\alpha}\}_\alpha, \{B_{\beta}\}_\beta \subset \mathscr{A}$ be some nets such that $A_{\alpha} \xrightarrow{{\rm weak}^*} A$ and $B_{\beta} \xrightarrow{{\rm weak}^*} B$. Define the map $\Phi : \mathscr{A} \bigoplus \mathscr{A} \to \mathscr{A} \bigotimes \mathscr{A}$ by
$\Phi(A,B) = A\otimes B$, where $\bigotimes$ denotes the spatial tensor product of $C^*$-algebras. Then it is easy to check that $\Phi$ is a completely positive multilinear map that is jointly weak$^*$-continuous if (1) is valid. Let $f\in \mathscr{A}^*$ be an arbitrary bounded linear functional. Then $f$ induces a bounded bilinear map $g: \mathscr{A} \bigoplus \mathscr{A} \to \mathbb{C}$ given by $g \left( A, B\right)= f(A B)$. Hence, there is a unique bounded linear functional $\tilde{g} \in\left(\mathscr{A} \bigotimes \mathscr{A}\right)^*$ such that $\tilde{g}(A\otimes B)= g(A,B)=f(AB)$. Therefore, by using assumption (1), we get
\begin{align*}
f(A_\alpha B_\beta)= \tilde{g}(A_\alpha \otimes B_\beta)= (\tilde{g}\circ \Phi)(A_\alpha, B_\beta) \xrightarrow{{\rm jointly}} (\tilde{g}\circ \Phi)(A, B) =\tilde{g}(A\otimes B)=f(AB),
\end{align*}
which proves (2).

(2) $\Longrightarrow$ (3). It is immediately follows from (2) according to the fact that the involution is
weak$^*$-continuous.

(3) $\Longrightarrow$ (1). We proceed by induction on $k$. Due to Lemma \ref{arbexte}, the conclusion is valid for $k=1$.
Let the conclusion be valid for $k=n$, let $ \Phi: \bigoplus_{i=1}^{n+1} \mathscr{A}_i \to \mathscr{B}$ be a positive multilinear map, and let $f$ be an arbitrary element of $ \mathscr{B}^* $. We show that if a net $\left\{ \left(A_{\alpha^{(1)}}, \ldots, A_{\alpha^{(n+1)}}\right)\right\}$ in $\bigoplus_{i=1}^{n+1} \mathscr{A}_i $
is weak$^*$-convergent to an element $(A_1,\ldots, A_{n+1})$, then
$$(f\circ\Phi)( A_{\alpha^{(1)}}, \ldots, A_{\alpha^{(n+1)}}) \rightarrow (f\circ\Phi) (A_1,\ldots, A_{n+1}).$$
According to the Jordan decomposition for bounded linear functionals on $C^*$-algebras, $f$ can be considered as a linear combination of four positive linear functionals on $\mathscr{B}$.
Hence, without loss of generality, we may assume that $f$ is a positive linear functional. Furthermore, since the addition, involution and scaler multiplication are weak$^*$-continuous and $\Phi$ is multilinear, we may assume that $A_i$'s as well as the members of $\left\{A_{\alpha^{(i)}}\right\}$'s ($ 1\leq i\leq n+1$) are self-adjoint.\\ We first state two facts as follows: \\
\textbf{Fact 1}. Let $\Psi: \bigoplus_{i=1}^{m+1} \mathscr{A}_i \to \mathscr{B}$ be a positive multilinear map and let $X \in \mathscr{A}_{{m+1}}$ be a positive element. Then $\Psi$ induces the positive $m$-linear map $\Psi_X: \bigoplus_{i=1}^{m} \mathscr{A}_i \to \mathscr{B}$ defined by $\Psi_X (C_1, \ldots, C_m)= \Psi( C_1, \ldots, C_m, X)$. Moreover, if the conclusion holds for $k=m$, then $\Psi_X$ is jointly weak$^*$-continuous.\\
\textbf{Fact 2}. We claim that if
\begin{itemize}
\item[(i)] $Z_i\in {\mathscr{A}_i}$ for $ 1\leq i \leq n$ are self-adjoint,
\item[(ii)] $\{Z_{\alpha^{(i)}}\}$ are some nets of self-adjoint elements in ${\mathscr{A}_i}$ $(1\leq i \leq n)$ with $Z_{\alpha^{(i)}}\xrightarrow{{\rm weak}^*}Z_i$,
\item[(iii)] $\{Z_{\alpha^{(n+1)}}\}$ is a net of self-adjoint elements in $\mathscr{A}_{n+1}$ that is weak$^*$-convergent to zero,
\end{itemize}
then $ \Phi(Z_{\alpha^{(1)}}, \ldots, Z_{\alpha^{(n+1)}}) $ is jointly weak$^*$-convergent to zero. To prove our claim, assume that the nets $ \{Z_{\alpha^{(1)}}\}, \ldots, \{Z_{\alpha^{(n)}}\}$ and $\{Z_{\alpha^{(n+1)}}\}$ enjoy properties ${\rm (i)}$--${\rm (iii)}$. Note that For every $ \left(\alpha^{(1)}, \ldots, \alpha^{(n+1)}\right)$, the elements $\left(I_1, \ldots, I_n, Z_{\alpha^{(n+1)}}\right)$ and \\ $\left(Z_{\alpha^{(1)}}, \ldots, Z_{\alpha^{(n)}}, I_{n+1} \right)$ are commutative and self-adjoint elements of $\bigoplus_{i=1}^{n+1} \mathscr{A}_i$, so the restriction of $\Phi$ on the $C^*$-subalgebra generated by the set \[\left\{\left(I_1, \ldots, I_n, Z_{\alpha^{(n+1)}}\right),\left(Z_{\alpha^{(1)}}, \ldots, Z_{\alpha^{(n)}}, I_{n+1} \right), \left(I_1, \ldots, I_{n+1}\right)\right\},\] as a commutative unital $C^*$-algebra,
is a completely positive multilinear map; see \cite[Proposition 12]{shon}. Hence,
\[ \Phi_2 \left(\begin{bmatrix}\left(I_1, \ldots, I_n, Z_{\alpha^{(n+1)}}\right) &0 \\ \left(Z_{\alpha^{(1)}}, \ldots, Z_{\alpha^{(n)}}, I_{n+1} \right) & 0
\end{bmatrix}\begin{bmatrix}\left(I_1, \ldots, I_n, Z_{\alpha^{(n+1)}}\right) &0 \\ \left(Z_{\alpha^{(1)}}, \ldots, Z_{\alpha^{(n)}}, I_{n+1} \right) & 0
\end{bmatrix}^* \right) \geq 0.
\]
Hence, by a computation, we get
\[ \begin{bmatrix} \Phi \left(I_1, \ldots, I_n, Z_{\alpha^{(n+1)}}^2\right) &\Phi \left(Z_{\alpha^{(1)}}, \ldots, Z_{\alpha^{(n)}}, Z_{\alpha^{(n+1)}}\right) \\ \Phi \left(Z_{\alpha^{(1)}}, \ldots, Z_{\alpha^{(n)}}, Z_{\alpha^{(n+1)}}\right)& \Phi \left(Z_{\alpha^{(1)}}^2, \ldots, Z_{\alpha^{(n)}}^2, I_{n+1}\right)
\end{bmatrix} \geq 0.
\]
Now, since positive linear functionals are completely positive, we have
\[ \begin{bmatrix} (f\circ \Phi) \left(I_1, \ldots, I_n, Z_{\alpha^{(n+1)}}^2\right) & (f\circ\Phi)\left(Z_{\alpha^{(1)}}, \ldots, Z_{\alpha^{(n)}}, Z_{\alpha^{(n+1)}}\right) \\ (f\circ\Phi)\left(Z_{\alpha^{(1)}}, \ldots, Z_{\alpha^{(n)}}, Z_{\alpha^{(n+1)}}\right) & (f\circ \Phi)\left(Z_{\alpha^{(1)}}^2, \ldots, Z_{\alpha^{(n)}}^2, I_{n+1}\right)
\end{bmatrix} \geq 0
\] for every positive linear functional $f\in\mathscr{B}^*$.
Therefore,
\begin{align}\label{ki1}
 \left((f\circ \Phi)\left(I_1, \ldots, Z_{\alpha^{(n+1)}}^2\right)\right)& \left( ( f\circ \Phi) \left(Z_{\alpha^{(1)}}^2, \ldots, Z_{\alpha^{(n)}}^2, I_{n+1}\right)\right)\nonumber\\
 &\qquad\geq \left((f\circ\Phi) )\left(Z_{\alpha^{(1)}}, \ldots, Z_{\alpha^{(n+1)}}\right)\right)^2.
\end{align}
Using assumption (3) and according to \textbf{Fact 1}, we realize that\\ $(f\circ \Phi) \left(Z_{\alpha^{(1)}}^2, \ldots, Z_{\alpha^{(n)}}^2, I_{n+1}\right)= (f\circ\Phi_{I_{n+1}}) \left(Z_{\alpha^{(1)}}^2, \ldots, Z_{\alpha^{(n)}}^2\right)$ is a (jointly) convergent net in $\mathbb{C}$, so the set
\[\left\{(f\circ \Phi) \left(Z_{\alpha^{(1)}}^2, \ldots, Z_{\alpha^{(n)}}^2, I_{n+1}\right); \left(\alpha^{(1)}, \ldots, \alpha^{(n)}\right) \geq \left(\alpha^{(1)}_0, \ldots, \alpha^{(n)}_0\right) \right\}\]
 is a bounded set for some $ \left(\alpha^{(1)}_0, \ldots, \alpha^{(n)}_0\right)$. Furthermore, since $\{Z_{\alpha^{(n+1)}}\}\xrightarrow{{\rm weak}^*} 0$, assumption (3) together with the separate weak$^*$-continuity of $\Phi$ (Lemma \ref{arbexte}) imply that $(f\circ \Phi)\left(I_1, \ldots, Z_{\alpha^{(n+1)}}^2\right)$ tends to zero. Hence, \eqref{ki1} gives $$(f\circ\Phi)\left(Z_{\alpha^{(1)}}, \ldots, Z_{\alpha^{(n)}}, Z_{\alpha^{(n+1)}}\right)\xrightarrow{{\rm jointly}} 0.$$
 This proves our claim.
Now assume that $\left\{ (A_{\alpha^{(1)}}, \ldots, A_{\alpha^{(n+1)}})\right\}$ is an arbitrary net of self-adjoint elements in $ \bigoplus_{i=1}^{n+1}\mathscr{A}_i$, which is weak$^*$-convergent to $(A_1,\ldots, A_{n+1})$. If $f\in \mathscr{B}^* $ is a positive functional, then we have
{\small
\begin{align*}
 (f \circ \Phi)&(A_{\alpha^{(1)}}, \ldots, A_{\alpha^{(n+1)}}) - (f\circ \Phi)(A_1,\ldots, A_{n+1})\\ & = (f\circ \Phi) (A_{\alpha^{(1)}}, \ldots, A_{\alpha^{(n+1)}}) - (f\circ \Phi)(A_1,\ldots, A_{n+1})
 \\ & \ \ + (f\circ \Phi) (A_{\alpha^{(1)}}, \ldots, A_{\alpha^{(n)}}, A_{n+1} ) - (f\circ \Phi) (A_{\alpha^{(1)}}, \ldots, A_{\alpha^{(n)}}, A_{n+1})\\ & = (f\circ \Phi) \left(A_{\alpha^{(1)}}, \ldots, A_{\alpha^{n}}, [A_{\alpha^{(n+1)}} -A_{n+1}]\right) \\ & \ \ + (f\circ \Phi) (A_{\alpha^{(1)}}, \ldots, A_{\alpha^{(n)}}, A_{n+1} )- (f\circ \Phi)(A_1,\ldots, A_{n+1}) \\ & = (f\circ \Phi) (A_{\alpha^{(1)}}, \ldots, A_{\alpha^{(n)}}, [A_{\alpha^{(n+1)}} -A_{n+1}]) \\ &\ \ + \left( (f\circ \Phi_{ A_{n+1}}) (A_{\alpha^{(1)}}, \ldots, A_{\alpha^{(n)}} )- (f\circ \Phi_{ A_{n+1}})(A_1,\ldots, A_{n}) \right).
\end{align*}}
Since $[A_{\alpha^{(n+1)}} -A_{n+1}]$ is weak$^*$-convergent to 0, \textbf{Fact 2} implies that
$$ (f\circ \Phi) (A_{\alpha^{(1)}}, \ldots,A_{\alpha^{(n)}}, [A_{\alpha^{(n+1)}} -A_{n+1}])\xrightarrow{{\rm jointly}} 0.$$
 Moreover, it follows from \textbf{Fact 1} that
\[\left( (f\circ \Phi_{ A_{n+1}}) (A_{\alpha^{(1)}}, \ldots, A_{\alpha^{(n)}} )- (f\circ \Phi_{ A_{n+1}})(A_1,\ldots, A_{n}) \right)\xrightarrow{{\rm jointly}} 0.\]
 Consequently, $\Phi: \bigoplus_{i=1}^{n+1} \mathscr{A}_i \to \mathscr{B}$ is jointly weak$^*$-continuous, which means that the conclusion holds for $k=n+1$.
\end{proof}

We remark that in the case of the joint weak$^*$-continuity of positive multilinear maps, the extended mapping in Lemma \ref{extension1} is weak$^*$-continuous.

Choi and Tsui \cite{choi} showed that for every tracial positive linear map $\Phi: \mathscr{A} \to \mathbb{B}(\mathscr{H})$, there exist a compact Hausdorff $\Omega$, a tracial positive linear map $\varphi_1: \mathscr{A} \to C(\Omega)$, and a tracial positive linear map $\varphi_2: C(\Omega) \to \mathbb{B}(\mathscr{H})$ such that $\Phi=\varphi_2 \circ \varphi_1$. This result is very useful. At first glance, it ensures the complete positivity of tracial positive linear maps. Moreover, it can be used to improve the properties of tracial functionals to tracial positive linear maps between $C^*$-algebras.\\
We aim to give an extension of Choi--Tsui's Theorem in the setting of positive multilinear and completely positive maps. To achieve our results, we state some lemmas as follows.
\begin{lemma}\label{oneone}
Let $ \Phi: \mathscr{A} \to \mathscr{B}$ be a $3$-positive map between unital $C^*$-algebras. If $ P\geq Q \in \mathscr{A}^+$ such that $\Phi(P)=\Phi(Q)$, then $\Phi(P-Q)=\Phi(0)$.
\end{lemma}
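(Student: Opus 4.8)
The plan is to distill from the single order relation $P\ge Q$ a positive $3\times 3$ matrix over $\mathscr{A}$ whose entries all lie in $\{P,Q,P-Q,0\}$, push it through the $3$-positivity of $\Phi$ (recall that $\Phi_3$ acts entrywise, so no linearity of $\Phi$ is needed), and then let the hypothesis $\Phi(P)=\Phi(Q)$ collapse the output.

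Put $R:=P-Q\in\mathscr{A}^{+}$. The first step is to verify that
\[
M:=\begin{bmatrix} P & Q & R\\ Q & Q & 0\\ R & 0 & R\end{bmatrix}\ \ge\ 0\qquad\text{in }M_3(\mathscr{A}).
\]
Writing $P=Q+R$, this is immediate from the factorization $M=CC^{*}+DD^{*}$, where $C$ is the matrix whose first column is $(Q^{1/2},Q^{1/2},0)$, $D$ the matrix whose first column is $(R^{1/2},0,R^{1/2})$, and all remaining columns vanish; each summand is positive, hence so is $M$. (Alternatively one can build $M$ out of two copies of the equivalence $A\ge B\ge 0\Leftrightarrow\left[\begin{smallmatrix}A&B\\ B&B\end{smallmatrix}\right]\ge 0$, but the direct factorization is shorter.)

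Next, apply the induced positive map $\Phi_3$ to $M$ and abbreviate $S:=\Phi(P)=\Phi(Q)$, $T:=\Phi(R)$, $U:=\Phi(0)$. Then
\[
N:=\begin{bmatrix} S & S & T\\ S & S & U\\ T & U & T\end{bmatrix}\ \ge\ 0\qquad\text{in }M_3(\mathscr{B}).
\]
Now let $v$ be the scalar column vector $(1,-1,0)^{\mathrm t}\in M_{3,1}(\mathbb{C})\subseteq M_{3,1}(\mathscr{B})$. A one-line computation gives $Nv=(0,0,T-U)^{\mathrm t}$, hence $v^{*}Nv=0$; since $N\ge 0$ this forces $Nv=0$ (writing $N=B^{*}B$, one gets $(Bv)^{*}(Bv)=0$, so $Bv=0$, so $Nv=0$), and the third coordinate of $Nv=0$ reads $T-U=0$, i.e.\ $\Phi(P-Q)=\Phi(0)$. (Equivalently, conjugating $N$ by the elementary matrix that subtracts the first row and column from the second yields $\left[\begin{smallmatrix}S&0&T\\ 0&0&U-T\\ T&U-T&T\end{smallmatrix}\right]\ge 0$, whose $\{2,3\}$ principal submatrix $\left[\begin{smallmatrix}0&U-T\\ U-T&T\end{smallmatrix}\right]\ge 0$ forces $U-T=0$ by the observation following Lemma~\ref{epsilpo}, after swapping the two coordinates.)

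The only step requiring genuine insight is producing $M$: it must be positive, must feature $P$ and $Q$ so that $\Phi(P)=\Phi(Q)$ can be invoked, must carry both $P-Q$ and $0$, and must be laid out so that after the substitution precisely one surviving slot pairs $\Phi(P-Q)$ against $\Phi(0)$ next to a zero diagonal entry. Everything after that—the positivity of $M$ and the extraction of the conclusion—is routine algebra internal to the $C^{*}$-algebras $M_3(\mathscr{A})$ and $M_3(\mathscr{B})$, with no need to realize $\mathscr{B}$ on a Hilbert space.
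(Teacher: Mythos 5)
Your proof is correct. In fact your test matrix $M=\left[\begin{smallmatrix} P & Q & R\\ Q & Q & 0\\ R & 0 & R\end{smallmatrix}\right]$ is, up to a permutation of the three indices, exactly the matrix the paper builds by summing $\left[\begin{smallmatrix} P-Q & P-Q & 0\\ P-Q & P-Q & 0\\ 0 & 0 & 0\end{smallmatrix}\right]$ and $\left[\begin{smallmatrix} 0 & 0 & 0\\ 0 & Q & Q\\ 0 & Q & Q\end{smallmatrix}\right]$, and your factorization $M=CC^{*}+DD^{*}$ is the same two-rank-one decomposition; so the first half of the argument coincides with the paper's. Where you genuinely diverge is the extraction step. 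The paper pads $\Phi_3(M)+\varepsilon I$ to a $4\times4$ matrix with an auxiliary positive invertible $T$, invokes Lemma~\ref{epsilpo} to form a Schur complement, and only then uses $\Phi(P)=\Phi(Q)$ to land on a block $\left[\begin{smallmatrix}\star & X\\ X & 0\end{smallmatrix}\right]\ge 0$ forcing $X=0$. You instead observe that $\Phi(P)=\Phi(Q)$ makes the relevant $2\times2$ corner of $N$ the degenerate block $\left[\begin{smallmatrix}S & S\\ S & S\end{smallmatrix}\right]$, so $v=(1,-1,0)^{\mathrm t}$ satisfies $v^{*}Nv=0$, whence $Nv=0$ and the third coordinate gives $\Phi(P-Q)=\Phi(0)$ outright. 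This is cleaner: it avoids the $\varepsilon$-regularization, the auxiliary matrix, and Lemma~\ref{epsilpo} entirely (your parenthetical alternative does reuse the observation following that lemma, but your main route is self-contained), at no loss of generality. Incidentally, the same ``degenerate block'' trick applied to the paper's own $\Phi_3$-image with the vector $(0,1,-1)^{\mathrm t}$ would shortcut their proof identically.
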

\begin{proof}
If $P\geq Q\geq 0$, then we can write
\[ \begin{bmatrix}
 P-Q & P-Q & 0 \\ P-Q & P-Q & 0 \\ 0 & 0 & 0
\end{bmatrix} \geq 0, \qquad \begin{bmatrix}
0 &0 & 0 \\ 0 & Q & Q \\ 0 & Q & Q
\end{bmatrix} \geq 0.
\] Summing the above matrices and then utilizing $3$-positivity of $\Phi$ give
 \[ \begin{bmatrix}
 \Phi(P-Q) & \Phi(P-Q) & \Phi(0) \\ \Phi(P-Q) & \Phi(P) & \Phi(Q)\\ \Phi(0) & \Phi(Q) & \Phi(Q)
\end{bmatrix} \geq 0.
\]
If $\varepsilon >0$ is a real number, then it follows from the positivity of above matrix that
{\small
\[ \begin{bmatrix}
 \Phi(P-Q)+\varepsilon I & \Phi(P-Q)+ \varepsilon I & \Phi(0)+ \varepsilon I & 0 \\ \Phi(P-Q)+ \varepsilon I & \Phi(P)+ \varepsilon I & \Phi(Q)+ \varepsilon I& 0 \\ \Phi(0)+ \varepsilon I & \Phi(Q)+ \varepsilon I & \Phi(Q)+ \varepsilon I & 0
 \\ 0 & 0 & 0 & T
\end{bmatrix} \geq 0.
\] }
Hence, by using Lemma \ref{epsilpo}, we deduce
{\small
\begin{align*} & \begin{bmatrix}
 \Phi(P-Q)+ \varepsilon I & \Phi(P-Q)+ \varepsilon I \\ \Phi(P-Q)+ \varepsilon I & \Phi(P)+ \varepsilon I
\end{bmatrix} \\ &\qquad \geq \begin{bmatrix}
\Phi(0)+ \varepsilon I & 0 \\ \Phi(Q)+ \varepsilon I & 0
\end{bmatrix} \begin{bmatrix}
 \left( \Phi(Q)+ \varepsilon I\right)^{-1} & 0
 \\ 0 & T^{-1}
\end{bmatrix} \begin{bmatrix}
\Phi(0)+ \varepsilon I & \Phi(Q)+ \varepsilon I \\ 0 & 0
\end{bmatrix}.
 \end{align*}}
Therefore,
{\small
 \[ \begin{bmatrix}
 \Phi(P-Q)+ \varepsilon I & \Phi(P-Q)+ \varepsilon I \\ \Phi(P-Q)+ \varepsilon I & \Phi(P)+ \varepsilon I
\end{bmatrix} \geq \begin{bmatrix}
(\Phi(0)+ \varepsilon I) \left( \Phi(Q)+ \varepsilon I\right)^{-1}(\Phi(0)+ \varepsilon I) & \Phi(0)+ \varepsilon I\\ \Phi(0)+ \varepsilon I & \Phi(Q)+ \varepsilon I
\end{bmatrix}.
 \]}
 Consequently, the equality $\Phi(Q)= \Phi(P)$ implies that

 \[ \begin{bmatrix}
\star & \Phi(P-Q)-\Phi(0) \\ \Phi(P-Q)-\Phi(0) &0
\end{bmatrix} \geq 0,
 \]
in which $\star$ is a positive operator. Now, the positivity of the above matrix implies that $ \Phi(P-Q)= \Phi(0)$.
\end{proof}
We need the following lemma from \cite{DMK3}.
\begin{lemma}\cite[Lemma 2.4 (2)]{DMK3}\label{oneone2}
Let $ \Phi: \mathscr{A} \to \mathscr{B}$ be a $3$-positive map between unital $C^*$-algebras. If $\Phi(Z)=\Phi(0)$ for some positive invertible operator $Z\in \mathscr{A}$, then $\Phi(X)=\Phi(0)$ for every $X\in \mathscr{A}$.
\end{lemma}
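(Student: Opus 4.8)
The plan is to obtain the statement from one well-chosen positive $3\times 3$ operator matrix, in a manner reminiscent of the proof of Lemma~\ref{oneone}.

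First I would record the cheap reductions. A $3$-positive map is in particular $2$-positive, hence self-adjoint (so $\Phi(A^*)=\Phi(A)^*$ for all $A$) and monotone on positive elements, by the facts established after \eqref{kadison2}. Since $Z$ is positive and invertible, choose $\delta>0$ with $\delta I\le Z$. From $0\le \delta I\le Z$, monotonicity yields $\Phi(0)\le \Phi(\delta I)\le \Phi(Z)=\Phi(0)$, so $\Phi(\delta I)=\Phi(0)=:c$. The useful consequence is that $\Phi$ now takes the value $c$ both at $0$ and at an \emph{invertible} positive element of small norm.

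Next, fix an arbitrary $X\in\mathscr{A}$. By Lemma~\ref{epsilpo}, $\frac1\delta X^*X\ge X^*(\delta I+\varepsilon I)^{-1}X$ for every $\varepsilon>0$, so $\begin{bmatrix}\delta I & X\\ X^* & \frac1\delta X^*X\end{bmatrix}\ge 0$; padding it with a zero row and column gives
\[
\hat M:=\begin{bmatrix}\delta I & X & 0\\ X^* & \frac1\delta X^*X & 0\\ 0 & 0 & 0\end{bmatrix}\in M_3(\mathscr{A})^+,
\]
with no restriction on the size of $X$. Applying $\Phi_3$ and using $\Phi(\delta I)=\Phi(0)=c$ together with self-adjointness of $\Phi$,
\[
\Phi_3(\hat M)=\begin{bmatrix} c & \Phi(X) & c\\ \Phi(X)^* & \Phi(\tfrac1\delta X^*X) & c\\ c & c & c\end{bmatrix}\ge 0 .
\]
Conjugating by the scalar matrix $V=\begin{bmatrix}1 & 0 & -1\\ 0 & 1 & -1\end{bmatrix}$ --- that is, subtracting the third row and column from the first two --- makes the four corner $c$'s cancel and leaves
\[
V\,\Phi_3(\hat M)\,V^*=\begin{bmatrix}0 & \Phi(X)-c\\ \Phi(X)^*-c & \Phi(\tfrac1\delta X^*X)-c\end{bmatrix}\ge 0 .
\]
By the remark immediately after Lemma~\ref{epsilpo}, a positive $2\times 2$ operator matrix with a zero diagonal entry has vanishing off-diagonal entry; hence $\Phi(X)=c=\Phi(0)$.

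I expect the only real difficulty to be locating $\hat M$, not verifying it. The naive attempt --- feeding $\begin{bmatrix}\delta I & X\\ X^* & \frac1\delta X^*X\end{bmatrix}$ directly into $\Phi_2$ --- produces merely the one-sided Schwarz-type inequality $\Phi(\tfrac1\delta X^*X)\ge \Phi(X)^*(c+\varepsilon)^{-1}\Phi(X)$, which does \emph{not} force $\Phi(X)=c$ once $c\neq0$ (take $c=I$ and $\Phi(X)$ an isometry). The point of invoking $3$-positivity and of the extra zero row/column --- whose $\Phi$-image is $\Phi(0)=c$ --- is precisely that, after the congruence, an \emph{honest} $0$ appears on the diagonal next to $\Phi(X)-c$; the undetermined entry $\Phi(\tfrac1\delta X^*X)$ plays no role in the final forcing, so it causes no trouble.
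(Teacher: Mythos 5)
Your argument is correct: every step checks out, including the positivity of $\hat M$ (via Lemma~\ref{epsilpo} and padding by a zero row and column), the compression $V\,\Phi_3(\hat M)\,V^*\ge 0$ for the scalar $2\times 3$ matrix $V$, and the final forcing $\Phi(X)=c$ from a positive $2\times 2$ operator matrix with a genuine $0$ on the diagonal. Note that the paper itself gives no proof of this lemma --- it is quoted from \cite[Lemma 2.4 (2)]{DMK3} --- so there is nothing internal to compare against; but your proof is exactly in the style of the paper's own proof of Lemma~\ref{oneone} ($3$-positivity of a padded positive matrix, Lemma~\ref{epsilpo}, and the ``zero diagonal entry kills the off-diagonal'' observation), and your closing remark correctly identifies why mere $2$-positivity of the unpadded matrix would not suffice. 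The only cosmetic point is that the detour through $\delta I$ is avoidable: one can work with $Z$ itself and the $(2,2)$ entry $X^*Z^{-1}X$, since monotonicity is only used to transfer $\Phi(Z)=\Phi(0)$ to an invertible element, which $Z$ already is.
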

If $\Phi: \bigoplus_{i=1}^k \mathscr{A}_i \to \mathscr{B}$ is a positive multilinear map and $\Phi(\textbf{I})=0$, then the above lemma ensures that $\Phi$ is identically zero. Indeed, if $\textbf{X}$ is a self-adjoint element of $\bigoplus_{i=1}^k \mathscr{A}_i$, then the restriction of $\Phi$ on the $C^*$-algebra generated by $\left\{\textbf{I}, \textbf{X} \right\}$ is a completely positive map. Hence, Lemma \ref{oneone2} ensures that $\Phi(\textbf{X})=0$. For an arbitrary element $\textbf{A}$ of $\bigoplus_{i=1}^k \mathscr{A}_i$, the assertion easily follows by using the self-adjoint decomposition of $\textbf{A}$ and applying the multilinearity of $\Phi$.\\
We say that a multimap $ \Phi: \bigoplus_{i=1}^k \mathscr{A}_i \to \mathscr{B}$ is tracial on $\mathscr{A}_j$ for some $ 1\leq j \leq k$ if
$\Phi(A_1, \ldots, AB, \ldots, A_k)= \Phi(A_1, \ldots, BA, \ldots, A_k)$ for all $A_i \in \mathscr{A}_i$ ($1\leq i\leq k, i\neq j$) and every $A,B \in \mathscr{A}_j$. Moreover, $\Phi$ is called tracial if it is tracial on $ \mathscr{A}_i$ for each $1\leq i\leq k$.

\begin{lemma}\label{3posit}
Let $\mathscr{M}$ be a properly infinite von Neumann algebra and let $\mathscr{B}$ be a unital $C^*$-algebra.
\begin{itemize}
\item[(i)]If $ \Phi: \mathscr{M} \to \mathscr{B}$ is a $3$-positive map such that $\Phi(X^*X)=\Phi(XX^*)$ for all $X\in \mathscr{M}$, then $\Phi$ is a constant map.
\item[(ii)] If $ \Phi: \mathscr{M} \to \mathscr{B}$ is a tracial linear map, then $\Phi$ identically zero.
\end{itemize}
\end{lemma}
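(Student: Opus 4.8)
The plan is to prove both parts by an isometry ``swindle'' inside $\mathscr{M}$, exploiting the fact that a properly infinite identity can be split into, and shifted onto, copies of itself.

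For \textbf{(i)}, I would start from the halving of the identity: since $I\in\mathscr{M}$ is properly infinite, there is a projection $E_1\le I$ with $E_1\sim I$ and $E_2:=I-E_1\sim I$ (the halving lemma for properly infinite projections; see \cite[Chapter~V]{TAK1}). Choose a partial isometry $V_1\in\mathscr{M}$ with $V_1^*V_1=I$ and $V_1V_1^*=E_1$; applying the hypothesis $\Phi(X^*X)=\Phi(XX^*)$ to $X=V_1$ gives $\Phi(I)=\Phi(E_1)$. Since $I\ge E_1\ge 0$ and $\Phi$ is $3$-positive, Lemma~\ref{oneone} yields $\Phi(E_2)=\Phi(I-E_1)=\Phi(0)$. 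Now pick $V_2\in\mathscr{M}$ with $V_2^*V_2=I$, $V_2V_2^*=E_2$ (available since $E_2\sim I$); the same hypothesis applied to $X=V_2$ gives $\Phi(I)=\Phi(E_2)=\Phi(0)$. As $I$ is positive and invertible, Lemma~\ref{oneone2} then forces $\Phi(X)=\Phi(0)$ for every $X\in\mathscr{M}$, i.e. $\Phi$ is constant.

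For \textbf{(ii)}, note first that $\Phi(0)=0$ by linearity. Iterating the halving lemma, $\mathscr{M}$ contains mutually orthogonal projections $(Q_n)_{n\ge1}$ each equivalent to $I$. Fix a partial isometry $W$ with $W^*W=I$, $WW^*=Q_1$, and partial isometries $w_n$ with $w_n^*w_n=Q_n$, $w_nw_n^*=Q_{n+1}$; set $V:=\sum_{n\ge1}w_n$, which converges strongly to a partial isometry in $\mathscr{M}$ with $V^*V=\sum_n Q_n$ and $V^nQ_1(V^*)^n=Q_{n+1}$ for all $n\ge 0$. Given $X\in\mathscr{M}$, put $Y:=WXW^*\in Q_1\mathscr{M}Q_1$. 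The elements $V^nY(V^*)^n$ have pairwise orthogonal supports $Q_{n+1}$ and norm $\le\|Y\|$, so $A:=\sum_{n\ge0}V^nY(V^*)^n$ converges strongly to an element of $\mathscr{M}$ supported under $V^*V$; hence $AV^*V=A$, and the telescoping identity $VAV^*=A-Y$ gives $Y=AV^*V-VAV^*=[AV^*,V]$. Therefore, by linearity and traciality, $\Phi(Y)=\Phi(AV^*V)-\Phi(VAV^*)=\Phi(AV^*V)-\Phi(AV^*V)=0$, while $\Phi(X)=\Phi(XW^*W)=\Phi(WXW^*)=\Phi(Y)$. Thus $\Phi\equiv 0$.

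The remaining work is routine bookkeeping: existence and composition rules of the partial isometries $V_1,V_2,W,w_n,V$, the fact that $A$ is a norm-bounded strong limit and hence lies in the von Neumann algebra $\mathscr{M}$, and the telescoping. The step requiring the most care is the one in \textbf{(ii)} where $\Phi$ is not assumed norm- or $\sigma$-weakly continuous, so one may not apply $\Phi$ termwise to the infinite series defining $A$; the vanishing of $\Phi$ on $Y$ must be deduced purely algebraically from the finite commutator expression $Y=[AV^*,V]$. (Alternatively, for \textbf{(ii)} one could invoke the known fact that every element of a properly infinite von Neumann algebra is a sum of commutators, but the swindle above is self-contained and just as short.)
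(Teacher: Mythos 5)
Your proof of part (i) is essentially the paper's: the authors also halve the identity into $P\sim I-P\sim I$, use the partial isometries to get $\Phi(I)=\Phi(P)=\Phi(I-P)$, apply Lemma~\ref{oneone} to conclude $\Phi(I-P)=\Phi(0)$, hence $\Phi(I)=\Phi(0)$, and finish with Lemma~\ref{oneone2}; your two-step version with $E_1,E_2$ is the same argument. Part (ii), however, is where you genuinely diverge, and in a useful direction. The paper proves (ii) by restricting $\Phi$ to the commutative $C^*$-algebra generated by $P$ and $I$, invoking positivity to get complete positivity there, and then reusing part (i); note that this silently assumes $\Phi$ is \emph{positive} (and positivity is also needed to pass from $\Phi(I)=0$ to $\Phi\equiv 0$), even though the statement of Lemma~\ref{3posit}(ii) only says ``tracial linear map.'' Your commutator swindle --- expressing $WXW^*=[AV^*,V]$ with $A=\sum_{n\ge0}V^nY(V^*)^n$ a bounded strongly convergent block-diagonal sum, so that $\Phi$ kills it by traciality and linearity alone --- needs neither positivity nor any continuity of $\Phi$, and so actually proves the lemma as literally stated and in greater generality (it is the classical fact that every element of a properly infinite von Neumann algebra is a commutator, hence such algebras admit no nonzero tracial linear maps). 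Your computations check out: $V^*V=\sum_nQ_n$ dominates the support of $A$, the telescoping $VAV^*=A-Y$ is correct, and you rightly flag that $\Phi$ must only be applied to the finite algebraic identity $Y=[AV^*,V]$, never termwise to the series. The trade-off is that the paper's route is shorter given that (i) is already in hand, while yours is self-contained and removes the hidden positivity hypothesis.
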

 \begin{proof}
(i) Since $\mathscr{M}$ is properly infinite, according to \cite[Chapter V, Proposition 1.36]{TAK1}, there exists a projection $P \in \mathscr{M}$ such that $I-P\sim P \sim I$. Hence, there are some partial isometries $V, S\in \mathscr{M}$ such that $S^*S=V^*V=I$, $VV^*=P$, and $I-P=SS^*$. Therefore, by using the tracial assumption on $\Phi$, we get
 $\Phi(I)=\Phi(P)=\Phi(I-P)$. Using the equality $\Phi(I)=\Phi(P)$ and applying Lemma \ref{oneone}, we ensure that $\Phi(I-P) =\Phi(0)$. Consequently, $\Phi(I)=\Phi(0)$. Now, Lemma \ref{oneone2} implies that $\Phi(X)=\Phi(0)$ for every $X \in \mathscr{M}$.

(ii) If $\Phi$ is a tracial positive linear map, then the restriction of $\Phi$ on the $C^*$-algebra generated by $P$ and $I$ is a completely positive map. Hence, using part (i), we deduce that $\Phi(I)=\Phi(0)$, which implies the assertion.
 \end{proof}

Note that the operator norm of any $C^*$-algebra is an example of a $2$-positive map with $\| X^*X\|= \| XX^*\|$ for all $X$. Hence, in the nonlinear case, the $3$-positivity condition in Lemma \ref{3posit} is necessary.
\begin{remark}\label{remarksupport}
If $\mathscr{A}_1, \ldots, \mathscr{A}_k, \mathscr{B}$ are unital $C^*$-algebras such that $\mathscr{A}_j$ is a properly infinite von Neumann algebra for some $1\leq j\leq k$, then every positive mulitlinear map $ \Psi: \bigoplus_{i=1}^k \mathscr{A}_i \to \mathscr{B}$ that is tracial on $\mathscr{A}_j$, must be identically zero. Indeed, if we define the map $\Psi_{(j)} : \mathscr{A}_j \to \mathscr{B}$ by $ \Psi_{(j)}(A_j)= \Psi(I_1, \ldots, A_j, \ldots, I_k)$, then $\Psi_{(j)}$ is a tracial positive linear map. According to the second part of Lemma \ref{3posit}, we get
\[0= \Psi_{(j)}(I_j)= \Psi(I_1, \ldots, I_k).\]
 The result now follows by noting the discussion after Lemma \ref{oneone2}.
\end{remark}

Now we give a decomposition for tracial positive multilinear maps by employing some ideas of \cite{choi}. Noting a question raised in \cite[page 60]{choi}, we explore some suitable conditions to achieve a factorization.
For an algebra $\mathscr{X}$ and a subalgebra $\mathscr{Y}$ of $\mathscr{X}$, we denote the algebraic center of $\mathscr{Y}$ with respect to $\mathscr{X}$ by $\mathcal{Z}(\mathscr{Y})$.
\begin{theorem}\label{main22}
Let $ \Phi: \bigoplus_{i=1}^k \mathscr{A}_i \to \mathscr{B}$ be a tracial positive multilinear map between unital $C^*$-algebras. Suppose that one of the following properties is satisfied:
\begin{enumerate}
\item $\mathscr{B}$ is an injective $C^*$-algebra;
\item $\mathscr{B}$ is a von Neumann algebra;
\item The center of $\bigoplus_{i=1}^k \mathscr{A}_i$ (with respect to $\bigoplus_{i=1}^k \mathscr{A}_i$) is injective;
\item $\bigoplus_{i=1}^k \mathscr{A}_i$ is a von Neumann algebra;
\item $\Phi$ is weakly compact.
\end{enumerate}
Then there exist a compact Hausdorff space $\Omega$, a tracial positive linear map $\varphi_1 : \bigoplus_{i=1}^k \mathscr{A}_i\to C(\Omega)$, and a tracial positive multilinear map $\varphi_2 : C(\Omega) \to \mathscr{B}$ such that $\Phi=\varphi_2 \circ \varphi_1$. Moreover, if $\Phi$ is unital, then $\varphi_1$ and $\varphi_2$ can be chosen unital.
\end{theorem}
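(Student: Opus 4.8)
The plan is to adapt the argument of Choi and Tsui \cite{choi}: pass to the universal enveloping von Neumann algebras, discard the properly infinite summands (where a tracial positive map is forced to vanish), and on the remaining finite parts factor through the center-valued traces by means of Dixmier's averaging theorem together with the traciality of $\Phi$; the four hypotheses enter only at the last step, to bring the outer factor down from $\mathscr{B}^{**}$ to $\mathscr{B}$.

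\emph{Reduction to biduals and killing the infinite part.} By Corollary \ref{extension1}, $\Phi$ extends to a jointly weak$^*$-weak$^*$-continuous positive multilinear map $\widetilde\Phi:\bigoplus_{i=1}^k\mathscr{A}_i^{**}\to\mathscr{B}^{**}$; since each $\mathscr{A}_i$ is weak$^*$-dense in $\mathscr{A}_i^{**}$ and the product of $\mathscr{A}_i^{**}$ is jointly weak$^*$-continuous on bounded sets, the traciality of $\Phi$ passes to $\widetilde\Phi$ (approximate $\textbf{A},\textbf{B}\in\bigoplus_i\mathscr{A}_i^{**}$ by bounded nets from $\bigoplus_i\mathscr{A}_i$ via Goldstine). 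In case (4) one skips this and works inside the $\mathscr{A}_i$ themselves; in case (2) one uses instead the extension $\widehat\Phi$ of Corollary \ref{extension1} whose range lies in the weak-operator closure of $\mathscr{B}$, which equals $\mathscr{B}$ once $\mathscr{B}$ is realized as a von Neumann algebra. Now write $\mathscr{A}_i^{**}=\mathscr{M}_i^f\oplus\mathscr{M}_i^\infty$ with $\mathscr{M}_i^\infty$ properly infinite (or zero) and $P_i^f$ the central projection onto $\mathscr{M}_i^f$. If $\mathscr{M}_j^\infty\neq 0$, then the positive multilinear map obtained from $\widetilde\Phi$ by constraining the $j$-th variable to $\mathscr{M}_j^\infty$ is tracial on the properly infinite von Neumann algebra $\mathscr{M}_j^\infty$, hence identically zero by Remark \ref{remarksupport}; expanding $A_i=P_i^fA_i+P_i^\infty A_i$ and using multilinearity gives $\widetilde\Phi(A_1,\dots,A_k)=\widetilde\Phi(P_1^fA_1,\dots,P_k^fA_k)$, so only the finite parts $\mathscr{M}_i^f$ matter.

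\emph{Averaging and the factorization over $\mathscr{B}^{**}$.} Let $T_i:\mathscr{M}_i^f\to\mathcal Z(\mathscr{M}_i^f)$ be the center-valued trace. Fixing a slot $i$ and, by multilinearity, positive elements in the other slots, the slot-$i$ map $X\mapsto\widetilde\Phi(\dots,X,\dots)$ is bounded and satisfies $\widetilde\Phi(\dots,UXU^*,\dots)=\widetilde\Phi(\dots,U^*UX,\dots)=\widetilde\Phi(\dots,X,\dots)$ for every unitary $U\in\mathscr{M}_i^f$; since, by Dixmier's averaging theorem, $T_i(X)$ is a norm limit of convex combinations of unitary conjugates of $X$, norm-continuity of that map yields $\widetilde\Phi(\dots,T_i(X),\dots)=\widetilde\Phi(\dots,X,\dots)$. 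Iterating over $i=1,\dots,k$,
\[
\widetilde\Phi(P_1^fA_1,\dots,P_k^fA_k)=\widetilde\Phi\bigl(T_1(P_1^fA_1),\dots,T_k(P_k^fA_k)\bigr).
\]
Each $\mathscr{C}_i:=\mathcal Z(\mathscr{M}_i^f)$ is a commutative von Neumann algebra, say $\mathscr{C}_i=C(\Omega_i)$; put $\Omega:=\bigsqcup_i\Omega_i$, so $C(\Omega)=\bigoplus_i\mathscr{C}_i$. Then $\varphi_1:=\bigoplus_i\bigl(T_i\circ(P_i^f\,\cdot\,)\bigr)\big|_{\mathscr{A}_i}:\bigoplus_i\mathscr{A}_i\to C(\Omega)$ is positive, linear and tracial — because $T_i(P_i^fAB)=T_i((P_i^fA)(P_i^fB))=T_i((P_i^fB)(P_i^fA))=T_i(P_i^fBA)$ by centrality of $P_i^f$ and traciality of $T_i$ — while $\Psi:=\widetilde\Phi\big|_{\bigoplus_i\mathscr{C}_i}:C(\Omega)\to\mathscr{B}^{**}$ is positive, multilinear and, having commutative domain, tracial; and $\Phi=\Psi\circ\varphi_1$.

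\emph{Descent to $\mathscr{B}$, unital case, and the main difficulty.} It remains to replace $\Psi$ by a map into $\mathscr{B}$. In case (1), injectivity of $\mathscr{B}$ supplies a unital completely positive idempotent $E:\mathscr{B}^{**}\to\mathscr{B}$ (extend $\mathrm{id}_{\mathscr{B}}$ along $\mathscr{B}\hookrightarrow\mathscr{B}^{**}$); since $\Phi$ already takes values in $\mathscr{B}$, $E\circ\Psi\circ\varphi_1=E\circ\Phi=\Phi$, so $\varphi_2:=E\circ\Psi$ works. In case (2) the canonical normal $^*$-epimorphism $\pi:\mathscr{B}^{**}\to\mathscr{B}$ with $\pi|_{\mathscr{B}}=\mathrm{id}$ plays the role of $E$ (equivalently, take $\varphi_2:=\widehat\Phi|_{\bigoplus_i\mathscr{C}_i}$). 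In case (4), the averaging step carried out inside the $\mathscr{A}_i$ gives $C(\Omega)=\bigoplus_i\mathcal Z(\mathscr{A}_i^f)\subseteq\bigoplus_i\mathscr{A}_i$ directly, so $\varphi_2:=\Phi|_{C(\Omega)}$ is $\mathscr{B}$-valued. In case (3), each $\mathcal Z(\mathscr{A}_i)$, being a direct summand of the injective algebra $\mathcal Z(\bigoplus_i\mathscr{A}_i)$, is injective, so $\mathrm{id}$ extends to a conditional expectation $\mathcal E_i:\mathscr{A}_i^{**}\to\mathcal Z(\mathscr{A}_i)$; replacing $T_i$ by $\mathcal E_i\circ T_i\circ(P_i^f\,\cdot\,)$ makes $\varphi_1$ take values in $C(\Omega):=\mathcal Z(\bigoplus_i\mathscr{A}_i)\subseteq\bigoplus_i\mathscr{A}_i$ and keeps it tracial (a tracial map followed by $\mathcal E_i$), and $\varphi_2:=\Phi|_{C(\Omega)}$ is $\mathscr{B}$-valued. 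Finally, if $\Phi$ is unital then so is $\widetilde\Phi$ by weak$^*$-continuity, whence $\varphi_1(\textbf{I})=(P_i^f)_i$ is the unit of $C(\Omega)$ and $\varphi_2$ of that unit equals $\widetilde\Phi(P_1^f,\dots,P_k^f)=\widetilde\Phi(\textbf{I})=I_{\mathscr B}$, so $\varphi_1,\varphi_2$ may be chosen unital. The technical heart is the averaging step, where the noncommutativity of all $k$ variables is ironed out at once while tracking traciality and slot-wise norm-continuity; among the four cases of the descent, (3) is the most delicate, since there the outer factor cannot be pulled down by a map $\mathscr{B}^{**}\to\mathscr{B}$ and one must verify — by a further Dixmier-type computation on the centers $\mathcal Z(\mathscr{M}_i^f)$ — that realizing $\varphi_1$ inside the center on the domain side still reproduces $\Phi$.
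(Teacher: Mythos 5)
Your proposal follows essentially the same route as the paper's proof: extend $\Phi$ to the biduals via Corollary \ref{extension1} and transfer traciality to the extension, discard the properly infinite summands of $\mathscr{A}_i^{**}$ using Remark \ref{remarksupport}, average each finite summand onto its center by traciality and multilinearity, and descend from $\mathscr{B}^{**}$ to $\mathscr{B}$ (or realize the centers inside the domain) exactly according to which of hypotheses (1)--(4) holds. The only noteworthy variation is that you implement the averaging with the norm-closed Dixmier convex hull and slot-wise norm-continuity, where the paper uses the weak$^*$-closed convex hull of unitary conjugates and weak$^*$-continuity of $\Psi$; this is a harmless, arguably cleaner, substitute for the same step.
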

\begin{proof}
 Let $\tilde{\Phi}: \bigoplus_{i=1}^k \mathscr{A}_i^{**} \to \mathscr{B}^{**}$ be the positive multilinear separately weak$^*$-continuous extension of $\Phi$ mentioned in Lemma \ref{extension1}. It follows from the separate weak$^*$-continuity of $\tilde{\Phi}$ and the separate weak$^*$-continuity of the multiplication that $\tilde{\Phi}$ is also tracial.
 It follows from \cite[Chapter V, Theorem 1.19]{TAK1} that every projection in a von Neumann algebra is uniquely written as the sum of centrally orthogonal
projections $M$ and $N$ such that $M$ is finite and $N$ is properly infinite. Hence, since $\mathscr{A}_i^{**}$'s are von Neumann algebras, there are finite von Neumann algebras $\mathscr{M}_{i}$ and properly infinite von Neumann algebras $\mathscr{N}_{i}$ such that $\mathscr{A}_i^{**}=\mathscr{M}_{i} \oplus \mathscr{N}_{i}$, $1\leq i \leq k$. Let $C_{M_i}$ be the convex hull of $\{ U^*M_i U : U\in \mathscr{M}_{i}\text{ is unitary}\}$ in $\mathscr{M}_{i}$. In virtue of \cite[Proposition 2.4.5]{sakai}, $\mathcal{Z}(\mathscr{M}_{i}) \cap \left({\rm weak}^*-{\rm cl}\{{C}_{M_i}\}\right) $ is singleton for each $1\leq i \leq k$. Hence for every $1\leq i \leq k$, the map $\psi_i $ from $\mathscr{M}_{i}$ onto $\mathcal{Z}(\mathscr{M}_{i})$ can be defined by
\[\psi_i(M_i)\in \mathcal{Z}(\mathscr{M}_{i}) \cap \left({\rm weak}^*-{\rm cl}\{{C}_{M_i}\}\right) \qquad (M_i \in \mathscr{M}_{i}). \]
Moreover, $\psi_i$'s are center-valued tracial positive conditional expectations; see \cite[Theorem 2.4.6]{sakai}. It is easy to see that $\psi_i$'s ($1\leq i \leq k$) induce a tracial positive linear map
\begin{align*}\Gamma :\bigoplus_{i=1}^k \mathscr{A}_i^{**}=\bigoplus_{i=1}^k\left(\mathscr{M}_{i} \oplus \mathscr{N}_{i}\right) & \to \mathcal{Z}\left(\bigoplus_{i=1}^k \mathscr{M}_{i}\right) \subseteq \mathcal{Z}\left(\bigoplus_{i=1}^k \mathscr{A}_i^{**}\right) \\
(M_1\oplus N_1, \ldots, M_k\oplus N_k) &\mapsto (\psi_1(M_1), \ldots, \psi_k(M_k)).
\end{align*}
We prove the theorem when (1), (2) or (5) is valid. If $\mathscr{B}$ is injective, then the identity map ${\rm id}: \mathscr{B} \to \mathscr{B}$, as a completely positive linear map, induces a completely positive linear map $\Lambda : \mathscr{B}^{**} \to \mathscr{B}$ such that $\Lambda|_{\mathscr{B}}={\rm id}$. Hence, $\Psi:=\Lambda \circ \tilde{\Phi}$ is a tracial positive multilinear map from $\bigoplus_{i=1}^k \mathscr{A}_i^{**}$ to $\mathscr{B}$. If either $\mathscr{B}$ is a von Neumann algebra or $\Phi$ is weakly compact, according to Lemma \ref{extension1}, $\tilde{\Phi}$ is $\mathscr{B}$-valued. Hence, we may consider the extended map $\tilde{\Phi}$ as a map from $\bigoplus_{i=1}^k \mathscr{A}_i^{**}$ to $\mathscr{B}$ which is shown by $\Psi$. In all cases, $\Psi|_{\bigoplus_{i=1}^k\mathscr{A}_i}=\Phi$. Now, we shall show that $(\Psi \circ \Gamma)(A_1,\ldots, A_k) = \Psi(A_1,\ldots, A_k)$ for every $(A_1,\ldots, A_k)\in\bigoplus_{i=1}^k \mathscr{A}_i^{**}$. To this end, let us claim  that\\

\textbf{Claim ($\star$)}: If $\mathscr{M}_i$'s ($1\leq i\leq n$) are von Neumann algebras and $\mathscr{N}_i$'s ($1\leq i\leq n$) are properly infinite von Neumann algebras, then every tracial positive multilinear map $ \Phi: \bigoplus_{i=1}^n \left(\mathscr{M}_i \oplus \mathscr{N}_i\right)\to \mathscr{B}$ is completely determined by the restriction of $\Phi$ on $ \bigoplus_{i=1}^n \mathscr{M}_i$. More precisely, if $M_i\in \mathscr{M}_i$ and $N_i\in \mathscr{N}_i$ $ (1\leq i\leq n)$, then $\Phi (M_1\oplus N_1, \ldots, M_n\oplus N_n)= \Phi(M_1, \ldots, M_n)$.

To prove the claim, using the multilinearity of $\Phi$, we can write
\begin{align*}
\Phi( {M}_1\oplus {N}_1, \ldots, {M}_n\oplus {N}_n) &=\Phi( {M}_1, \ldots, {M}_n) \\ & \,\, \ + \Phi( {N}_1, {M}_2, \ldots, {M}_n) +\cdots+ \Phi( {N}_1, \ldots, {N}_n)\\
 & = \Phi( {M}_1, \ldots, {M}_n) + 0 +0+ \cdots + 0 \tag{by Remark \ref{remarksupport}} \\ & = \Phi( {M}_1, \ldots, {M}_n),
\end{align*}
which proves claim ($\star$).

According to the definition of $\Gamma$ and employing the fact that $\Psi$ is a tracial positive multilinear map, due to claim ($\star$), it is sufficient to verify that $$\Psi (\psi_1(M_1), \ldots, \psi_k(M_k))= \Psi(M_1, \ldots, M_k)$$ for all $(M_1, \ldots, M_k) \in \bigoplus_{i=1}^k\mathscr{M}_{i}$.

To achieve it, let $(M_1, \ldots, M_k) \in \bigoplus_{i=1}^k\mathscr{M}_{i}$ and $\sum_{j=1}^{p_i} \lambda_{i,j} U_{i,j} M_i U_{i,j}^*$ be arbitrary elements of $C_{M_i}$'s $(1\leq i \leq k)$ such that $\sum_{j=1}^{p_i} \lambda_{i,j}=1$. Then
\begin{align*}
&\Psi\left(\sum_{j=1}^{p_1} \lambda_{1,j} U_{1,j} M_1 U_{1,j}^*, \ldots, \sum_{j=1}^{p_k} \lambda_{k,j} U_{k,j} M_k U_{k,j}^*\right)\\
 &\qquad= \sum_{j_1=1}^{p_1}\sum_{j_2=1}^{p_2} \cdots \sum_{j_k=1}^{p_k} \lambda_{1,j_1} \cdots \lambda_{k,j_k}
 \Psi\left(U_{1,j_1} M_1 U_{1,j_1}^*, \ldots, U_{k,j_k} M_k U_{k,j_k}^*\right) \tag{since $\Psi$ is multilinear}
\\& \qquad = \sum_{j_1=1}^{p_1}\sum_{j_2=1}^{p_2} \cdots \sum_{j_k=1}^{p_k} \lambda_{1,j_1} \cdots \lambda_{k,j_k} \Psi( M_1, \ldots, M_k) \tag{since $\Psi$ is tracial} \\
 &\qquad= \Psi( M_1, \ldots, M_k). \
\tag{since $\sum_{j_s =1}^{p_s} \lambda_{j_s} = 1,\, 1\leq s\leq k$}
\end{align*}
It follows that $\Psi \circ \Gamma = \Psi$ on $\bigoplus_{i=1}^k C_{M_i}$. The assertion is concluded
by utilizing the weak$^*$-continuity of $\Psi$.

Next, note that the commutative $ C^*$-algebra $\mathcal{Z}\left(\bigoplus_{i=1}^k \mathscr{M}_{i}\right)$ can be identified with $C(\Omega)$ for some locally compact Hausdorff space $\Omega$.
Consider the following diagram:
 \[ \bigoplus_{i=1}^k \mathscr{A}_i^{**} \xrightarrow{\hspace*{.2cm}\Gamma\hspace*{.2cm}} C(\Omega)\xrightarrow{\hspace*{.2cm}\tilde{\Phi} |_{C(\Omega)}\hspace*{.2cm}} \mathscr{B}^{**} \xrightarrow{\hspace*{.2cm}\Lambda\hspace*{.2cm}} \mathscr{B}. \]
Taking $\varphi_1:= \Gamma|_\mathscr{A}$ and $\varphi_2 :=\Psi|_{C(\Omega)}$, the proof of Theorem in cases (1), (2), and (5) is completed.

Finally, if either $\mathcal{Z}\left(\bigoplus_{i=1}^k \mathscr{A}_i\right)$, considered as the center of $ \bigoplus_{i=1}^k \mathscr{A}_i$ with respect to $\bigoplus_{i=1}^k \mathscr{A}_i$, is injective or $\bigoplus_{i=1}^k \mathscr{A}_i$ is a von Neumann algebra, then there is a completely positive linear map $\Upsilon : \mathcal{Z}\left(\bigoplus_{i=1}^k \mathscr{A}_i^{**}\right) \to \mathcal{Z}\left(\bigoplus_{i=1}^k \mathscr{A}_i\right)$ such that $\Upsilon|_{\mathcal{Z}(\bigoplus_{i=1}^k \mathscr{A}_i)}={\rm id}$. Indeed, if $\mathcal{Z}\left(\bigoplus_{i=1}^k \mathscr{A}_i\right)$ is injective, then the identity map induces a completely positive map $\Upsilon : \mathcal{Z}\left(\bigoplus_{i=1}^k \mathscr{A}_i^{**}\right) \to \mathcal{Z}\left(\bigoplus_{i=1}^k \mathscr{A}_i\right)$. In the case when $ \bigoplus_{i=1}^k \mathscr{A}_i$ is a von Neumann algebra, since $\mathcal{Z}\left(\bigoplus_{i=1}^k \mathscr{A}_i\right)$ is also a von Neumann algebra, the assertion follows from \cite[Proposition 1.17.8]{sakai}; see also the previous part of the current proof. Let $\Gamma :\bigoplus_{i=1}^k \mathscr{A}_i^{**} \to \mathcal{Z}\left(\bigoplus_{i=1}^k \mathscr{A}_i^{**}\right)$ be the tracial positive linear map constructing in the previous part and let $\Theta : \mathcal{Z}\left(\bigoplus_{i=1}^k \mathscr{A}_i\right) \to \mathscr{B}$ be the restriction of $\Phi$ on $\mathcal{Z}\left(\bigoplus_{i=1}^k \mathscr{A}_i\right)$. Then we have the following diagram:
 \[ \bigoplus_{i=1}^k \mathscr{A}_i^{**} \xrightarrow{\hspace*{.2cm}\Gamma\hspace*{.2cm}} \mathcal{Z}\left(\bigoplus_{i=1}^k \mathscr{A}_i^{**}\right)\xrightarrow{\hspace*{.2cm}\Upsilon\hspace*{.2cm}} \mathcal{Z}\left(\bigoplus_{i=1}^k \mathscr{A}_i\right)\xrightarrow{\hspace*{.2cm}\Theta\hspace*{.2cm}} \mathscr{B}. \]
Setting $\varphi_1:= \Gamma|_\mathscr{A}$ and $\varphi_2:= \Theta \circ \Upsilon$
 and using a similar argument as in the proof of the previous part, we reach the desired assertion.
\end{proof}

\begin{corollary}\label{completcons}
If $ \Phi: \bigoplus_{i=1}^k \mathscr{A}_i \to \mathscr{B}$ is a tracial positive multilinear map between unital $C^*$-algebras and either of $\mathscr{B}$ or $\bigoplus_{i=1}^k \mathscr{A}_i$ has one of the properties (1), (2) ,(3), (4), and (5) in Theorem \ref{main22}, then $\Phi$ is completely positive. Moreover, for every $\textbf{P}\in\bigoplus_{i=1}^k \mathscr{A}_i^+$, the inducing positive multilinear map $\Psi : \bigoplus_{i=1}^k \mathscr{A}_i \to \mathscr{B}$ given by $\Psi_{\textbf{P}}(\textbf{A}) = \Phi(\textbf{PA})$ is completely positive.
\end{corollary}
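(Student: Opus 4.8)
The plan is to read the statement off the decomposition provided by Theorem~\ref{main22}, combined with two classical facts: a positive linear map with commutative range is completely positive, and a positive multilinear map with commutative domain is completely positive. First I would invoke Theorem~\ref{main22} --- its hypothesis is exactly the hypothesis here, since one of $\mathscr{B}$ and $\bigoplus_{i=1}^k\mathscr{A}_i$ is assumed to satisfy one of (1)--(4) --- to obtain a compact Hausdorff space $\Omega$, a tracial positive linear map $\varphi_1:\bigoplus_{i=1}^k\mathscr{A}_i\to C(\Omega)$, and a tracial positive multilinear map $\varphi_2:C(\Omega)\to\mathscr{B}$ with $\Phi=\varphi_2\circ\varphi_1$. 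Since $\varphi_1$ is a positive linear map into the commutative $C^*$-algebra $C(\Omega)$, it is completely positive (this is classical; see \cite{Paulbook}). Since $\varphi_2$ has commutative domain, it is completely positive by \cite[Proposition~12]{shon} (when $\Phi$ is unital one may take $\varphi_1,\varphi_2$ unital and apply that result directly; the non-unital case reduces to it routinely, or can be handled directly via the Schur product theorem recalled in Section~1).

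Next, $\Phi=\varphi_2\circ\varphi_1$ is a composition of completely positive maps, and therefore completely positive: at the level of $n$-fold amplifications one has $\Phi_n=(\varphi_2)_n\circ(\varphi_1)_n$, which is positive since each factor is positive. Hence $\Phi$ is completely positive.

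For the final assertion, note first that $\Psi_{\textbf{P}}$ is multilinear, since each coordinate map $A_i\mapsto P_iA_i$ is linear. To see that it is positive, use the factorization above and the traciality of the \emph{linear} factor $\varphi_1$: for every $\textbf{A}\in\bigoplus_{i=1}^k\mathscr{A}_i$,
\[
\Psi_{\textbf{P}}(\textbf{A})=\Phi(\textbf{PA})=\varphi_2\bigl(\varphi_1(\textbf{PA})\bigr)=\varphi_2\bigl(\varphi_1(\textbf{P}^{1/2}\textbf{A}\textbf{P}^{1/2})\bigr),
\]
the last equality being $\varphi_1(\textbf{X}\textbf{Y})=\varphi_1(\textbf{Y}\textbf{X})$ with $\textbf{X}=\textbf{P}^{1/2}$ and $\textbf{Y}=\textbf{P}^{1/2}\textbf{A}$. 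Thus $\Psi_{\textbf{P}}=\varphi_2\circ\eta_{\textbf{P}}$, where $\eta_{\textbf{P}}(\textbf{A})=\varphi_1(\textbf{P}^{1/2}\textbf{A}\textbf{P}^{1/2})$ is a linear map into $C(\Omega)$ which is positive, because $\textbf{A}\geq0$ forces $\textbf{P}^{1/2}\textbf{A}\textbf{P}^{1/2}\geq0$ and $\varphi_1$ is positive. Being positive with commutative range, $\eta_{\textbf{P}}$ is completely positive, and composing with the completely positive $\varphi_2$ shows that $\Psi_{\textbf{P}}$ is a positive multilinear map which is, moreover, completely positive.

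The result is essentially a corollary of Theorem~\ref{main22}. The two points that genuinely need attention are the invocations of the principle that positivity together with a commutative range (for linear maps) or a commutative domain (for multilinear maps) already forces complete positivity, and --- in the last part --- the observation that positivity of the compression $\Psi_{\textbf{P}}$ is not automatic but must be extracted from the traciality of $\Phi$ through the identity $\varphi_1(\textbf{PA})=\varphi_1(\textbf{P}^{1/2}\textbf{A}\textbf{P}^{1/2})$; I expect this last step to be the only mildly nontrivial one.
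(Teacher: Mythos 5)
Your proposal is correct and follows essentially the same route as the paper: both deduce complete positivity of $\Phi$ from the decomposition of Theorem~\ref{main22} via the commutative range of $\varphi_1$ and commutative domain of $\varphi_2$, and both handle $\Psi_{\textbf{P}}$ by using traciality to rewrite $\Phi(\textbf{P}\textbf{A})$ as $\Phi(\textbf{P}^{1/2}\textbf{A}\textbf{P}^{1/2})$. The only cosmetic difference is that you absorb the conjugation by $\textbf{P}^{1/2}$ into a new completely positive linear factor $\eta_{\textbf{P}}$ with commutative range, whereas the paper applies the congruence $\mathrm{diag}(\textbf{P}^{1/2},\ldots,\textbf{P}^{1/2})[\textbf{A}_{i,j}]\,\mathrm{diag}(\textbf{P}^{1/2},\ldots,\textbf{P}^{1/2})\geq 0$ directly to the already-established complete positivity of $\Phi$.
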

\begin{proof}
The complete positivity of $\Phi$ is deduced from Theorem \ref{main22}. Indeed, if $\Phi = \varphi_2 \circ \varphi_1$ is the decomposition for $\Phi$ as mentioned in Theorem \ref{main22}, then the range of positive linear map $\varphi_1$ is commutative, so it is completely positive. Moreover, the domain of $\varphi_2$ is commutative; hence it is also a completely positive multilinear map. The above facts ensure that $\varphi_2 \circ \varphi_1$ is completely positive. To see the complete positivity of $\Psi_{\textbf{P}}$, let $\textbf{P}\in \bigoplus_{i=1}^k \mathscr{A}_i^+$. Then the tracial property of $\Phi$ implies that $\Psi(\textbf{A}) = \Phi\left(\textbf{P}^\frac{1}{2}\textbf{ A}\textbf{P}^\frac{1}{2}\right)$. Now let $[\textbf{A}_{i,j}]\in M_n\left(\bigoplus_{i=1}^k \mathscr{A}_i\right)^+\simeq \bigoplus_{i=1}^k M_n\left(\mathscr{A}_i\right)^+$. Then we can write
\[ 0\leq {\rm diag}_n (\textbf{P}^\frac{1}{2}, \ldots, \textbf{P}^\frac{1}{2}) [\textbf{A}_{i,j}]_{i,j=1}^n {\rm diag}_n(\textbf{P}^\frac{1}{2}, \ldots, \textbf{P}^\frac{1}{2})= \left[\textbf{P}^\frac{1}{2}\textbf{A}_{i,j}\textbf{P}^\frac{1}{2}\right]_{n\times n},\]
where ${\rm diag}_n ({X}_1, \ldots, {X}_n)$ is used to denote the $n\times n$ diagonal matrix $ {X}_1\oplus \cdots \oplus {X}_n$.
Since $\Phi$ is completely positive, we get
$$0\leq \Phi_n \left(\left[\textbf{P}^\frac{1}{2}\textbf{A}_{i,j}\textbf{P}^\frac{1}{2}\right]\right)= \Psi_n \left(\left[\textbf{A}_{i,j}\right]\right)$$
for every $n\in \mathbb{N}$.
\end{proof}

Our next assertion gives a version of the Choi--Tsui result for nonlinear tracial completely positive maps.
\begin{theorem}\label{th2.11}
Let $\mathscr{M}$ be a von Neumann algebra and let $\mathscr{B}$ be a unital $C^*$-algebra. If $\Phi: \mathscr{M} \to \mathscr{B}$ is a tracial completely positive map, then there exist a compact Hausdorff space $\Omega$, a tracial positive linear map $\varphi_1 :\mathscr{M} \to C(\Omega)$, and a tracial completely positive map $\varphi_2 : C(\Omega) \to \mathscr{B}$ such that $\Phi=\varphi_2 \circ \varphi_1$. Moreover, if $\Phi$ is unital, then $\varphi_1$ and $\varphi_2$ can be chosen unital.
\end{theorem}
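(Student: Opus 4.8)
The plan is to bootstrap from the multilinear case (Theorem~\ref{main22}) via the Ando--Choi mixed-homogeneous decomposition. First I would invoke \eqref{namayeshcom} to write $\Phi=\sum_{m,n\geq 0}\Phi_{m,n}$, where each $\Phi_{m,n}$ is a tracial completely positive $(m,n)$-mixed homogeneous map and $\Phi_{0,0}\equiv\Phi(0)$. For $m+n>0$, representation \eqref{homomulti} provides a (symmetric, without loss of generality) completely positive $(m+n)$-linear map $\Phi^{(m,n)}:\mathscr{M}^{(m)}\oplus\bar{\mathscr{M}}^{(n)}\to\mathscr{B}$ with $\Phi_{m,n}(A)=\Phi^{(m,n)}(A^{(m)},\bar A^{(n)})$. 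Since $\mathscr{M}$ is a von Neumann algebra, so is its conjugate $\bar{\mathscr{M}}$, hence $\mathscr{M}^{(m)}\oplus\bar{\mathscr{M}}^{(n)}$ is a von Neumann algebra and condition~(4) of Theorem~\ref{main22} is at our disposal --- \emph{provided} we first verify that $\Phi^{(m,n)}$ is again tracial. That verification is the heart of the matter, and I return to it at the end.

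Granting it, Theorem~\ref{main22} yields $\Phi^{(m,n)}=\varphi_2^{(m,n)}\circ\Gamma_{m,n}$, and inspection of its proof shows that, on the original algebra, $\Gamma_{m,n}$ is the center-valued trace of the finite summand applied coordinate-wise, while $\varphi_2^{(m,n)}$ agrees with $\Phi^{(m,n)}$ on the center. Writing $\mathscr{M}=\mathscr{M}_f\oplus\mathscr{M}_\infty$ with $\mathscr{M}_f$ finite and $\mathscr{M}_\infty$ properly infinite, let $\tau:\mathscr{M}\to\mathcal{C}:=\mathcal{Z}(\mathscr{M}_f)$ be the canonical center-valued trace of $\mathscr{M}_f$, extended by $0$ on $\mathscr{M}_\infty$; this is a normal tracial positive linear map which restricts to the identity on $\mathcal{C}$, is unital onto $\mathcal{C}$, and --- crucially --- does not depend on $(m,n)$. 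Then $\Gamma_{m,n}(A_1,\dots,A_m,\bar B_1,\dots,\bar B_n)=(\tau(A_1),\dots,\tau(A_m),\overline{\tau(B_1)},\dots,\overline{\tau(B_n)})$, so evaluating on the diagonal and applying \eqref{homomulti} at the point $\tau(A)\in\mathscr{M}$ gives $\Phi_{m,n}(A)=\varphi_2^{(m,n)}(\tau(A)^{(m)},\overline{\tau(A)}^{(n)})=\Phi^{(m,n)}(\tau(A)^{(m)},\overline{\tau(A)}^{(n)})=\Phi_{m,n}(\tau(A))$ for all $m+n>0$, and trivially $\Phi_{0,0}(A)=\Phi(0)=\Phi_{0,0}(\tau(A))$. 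Summing over $m,n$ and using \eqref{namayeshcom} at the point $\tau(A)$ gives $\Phi(A)=\sum_{m,n}\Phi_{m,n}(\tau(A))=\Phi(\tau(A))$ for every $A\in\mathscr{M}$. Hence $\Phi=\varphi_2\circ\varphi_1$ with $\varphi_1:=\tau:\mathscr{M}\to\mathcal{C}$ a tracial positive linear map and $\varphi_2:=\Phi|_{\mathcal{C}}:\mathcal{C}\to\mathscr{B}$ a tracial completely positive map (the restriction of a tracial completely positive map to a $C^*$-subalgebra is again tracial and completely positive). As $\mathcal{C}$ is a commutative von Neumann algebra it is $*$-isomorphic to $C(\Omega)$ for a compact Hausdorff $\Omega$, which is the asserted decomposition. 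If $\Phi$ is unital, then $\varphi_1(I)=\tau(I)=I_{\mathcal{C}}$ since the center-valued trace is unital, and $\varphi_2(I_{\mathcal{C}})=\varphi_2(\varphi_1(I))=\Phi(I)=I_{\mathscr{B}}$, so $\varphi_1$ and $\varphi_2$ are unital.

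The main obstacle is the claim that $\Phi^{(m,n)}$ is tracial. Traciality of $\Phi_{m,n}$ amounts to unitary invariance, $\Phi_{m,n}(UXU^*)=\Phi_{m,n}(X)$, and combined with the uniqueness of the symmetric multilinear lift (equivalently, with the polarization formula) this quickly gives that $\Phi^{(m,n)}$ is invariant under the \emph{simultaneous} conjugation $(A_i)\mapsto(UA_iU^*)$ of all coordinates by a single unitary $U$. What Theorem~\ref{main22} actually uses is invariance under \emph{independent} conjugations of the several coordinates, i.e. traciality of $\Phi^{(m,n)}$ in each slot separately, and simultaneous invariance alone does not suffice: the bilinear map $(A,B)\mapsto\mathrm{Tr}(AB)$ on $M_2(\mathbb{C})$ is simultaneously conjugation-invariant yet tracial in neither slot --- and, tellingly, it fails to be completely positive. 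So the complete positivity of the lift must be brought to bear. I expect the argument to pass, via Hiai's correspondence, to the associated completely positive \emph{linear} map $\rho$ on the maximal tensor product $\mathscr{M}^{\otimes_{\max}m}\otimes_{\max}\bar{\mathscr{M}}^{\otimes_{\max}n}$, to fix a minimal Stinespring dilation of $\rho$, and then to exploit the commuting-range structure of the dilating representation together with minimality to upgrade the invariance available on the ``diagonal'' $*$-subalgebra $\{X^{\otimes m}\otimes\bar X^{\otimes n}:X\in\mathscr{M}\}$ --- where the unitary invariance of $\Phi_{m,n}$ lives --- to slot-wise traciality of $\Phi^{(m,n)}$. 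Once this lemma is secured, the reassembly above is routine.
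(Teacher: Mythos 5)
You have reconstructed the paper's own strategy almost exactly: the Ando--Choi expansion $\Phi=\Phi(0)+\sum_{m+n\geq 1}\Phi_{m,n}$, the multilinear lifts $\Phi^{(m,n)}$ of \eqref{homomulti}, an appeal to Theorem~\ref{main22}, and a reduction to the center-valued trace $\tau$ of the finite summand. Your reassembly (proving directly that $\Phi(A)=\Phi(\tau(A))$ and taking $\varphi_1=\tau$, $\varphi_2=\Phi|_{\mathcal{Z}(\mathscr{M}_f)}$) is in fact cleaner than the paper's construction with the infinite direct sums $\mathscr{M}^{\oplus_{m=1}^{\infty}}\bigoplus\bar{\mathscr{M}}^{\oplus_{n=1}^{\infty}}$ and the projections $\Pi_{m,n}$, but it carries the same content. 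You have also put your finger on exactly the right spot: everything hinges on $\Phi^{(m,n)}$ being tracial \emph{in each slot separately}, and the paper asserts this without argument (the parenthetical ``Since $\Phi^{(m,n)}$ is a tracial positive multilinear map'' in the displayed computation, where independent unitaries $U_{1,j_1},\dots,U_{m,j_m}$ act in the different slots).

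The gap, however, is not closable: the lemma you defer to the end is false, and complete positivity of the lift does not rescue it. Take $\mathscr{M}=M_2(\mathbb{C})$, $\mathscr{B}=\mathbb{C}$, and $\Phi(A)=\det A=\tfrac12(\mathrm{Tr}(A)^2-\mathrm{Tr}(A^2))=\mathrm{Tr}(P_-(A\otimes A))$, with $P_-$ the projection onto the antisymmetric subspace of $\mathbb{C}^2\otimes\mathbb{C}^2$. This is a tracial ($\det(AB)=\det(BA)$), $(2,0)$-homogeneous, completely positive map of type 2: if $[A_{ij}]\geq 0$ then $[A_{ij}\otimes A_{ij}]\geq 0$ (a principal submatrix of $[A_{ij}]\otimes[A_{ij}]$), and $\mathrm{Tr}(P_-\,\cdot\,)$ is a positive functional. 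Its unique symmetric bilinear lift is $\Phi^{(2,0)}(A,B)=\tfrac12(\mathrm{Tr}(A)\mathrm{Tr}(B)-\mathrm{Tr}(AB))$, which is completely positive and simultaneously conjugation-invariant yet satisfies $\Phi^{(2,0)}(XY,B)-\Phi^{(2,0)}(YX,B)=-\tfrac12\mathrm{Tr}([X,Y]B)\neq 0$; and no other lift can be slot-tracial, since a bilinear map on the factor $M_2(\mathbb{C})$ that is tracial in each slot kills trace-zero arguments and so has diagonal $c\,\mathrm{Tr}(A)^2\neq\det A$. Worse, the example defeats the statement of Theorem~\ref{th2.11} itself, not merely this proof route: every tracial \emph{linear} $\varphi_1$ on $M_2(\mathbb{C})$ vanishes on commutators, hence on all trace-zero matrices, so any composition $\varphi_2\circ\varphi_1$ depends on $A$ only through $\mathrm{Tr}(A)$, whereas $\det(I_2)=1\neq 0=\det(\mathrm{diag}(2,0))$. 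So your instinct that the slot-wise traciality of $\Phi^{(m,n)}$ is ``the heart of the matter'' was correct, but the Stinespring/minimality argument you hope will supply it cannot exist; the theorem needs an additional hypothesis beyond traciality of the nonlinear map $\Phi$ (the linear case of Choi--Tsui is unaffected).
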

\begin{proof}
As mentioned in Subsection \ref{overviewnonlinear}, there are $(m,n)$-mixed homogeneous completely positive maps
 $\Phi_{m,n} : \mathscr{M}\to \mathscr{B}$ such that
\[ \Phi (A)= \Phi(0) + \sum _{\substack{m,n=0\\ m+n\geq 1}}^\infty \Phi_{m,n}(A) \qquad (A\in \mathscr{A}).\] In addition,
\[ \Phi_{m,n}(A)=\frac{1}{m! n!} \frac{\partial^{m+n}}{\partial^m z \partial^n \bar{z}} \Phi(zA)|_{z=0} \qquad (z\in \mathbb{C}).\]
Hence, if $\Phi$ is tracial, then $\Phi_{m,n}$'s are tracial ($m, n= 0, 1, \ldots$).
Moreover, if $m+n>0$, then for every $(m,n)$-mixed homogeneous completely positive map $\Phi_{m,n} : \mathscr{M}\to \mathscr{B}$, there is a completely positive $(m+n)$-linear map $\Phi^{(m,n)}: \mathscr{M}^{(m)}\bigoplus \bar{\mathscr{M}}^{(n)} \to \mathscr{B}$ such that $\Phi_{m,n}(A)= \Phi^{(m,n)}(A^{(m)} \oplus \bar{A}^{(n)})$, where $\bar{\mathscr{M}}$ is the $C^{*}$-algebra conjugate to $\mathscr{M}$. Note that $\mathscr{M}^{(0)}=\bar{\mathscr{M}}^{(0)}:=\{0\}$. Let $\mathscr{M}=\mathscr{M}_1 \oplus \mathscr{M}_2$, where $\mathscr{M}_1$ is a finite von Neumann algebra and $ {\mathscr{M}_2}$ is a properly infinite von Neumann algebra. Then it is easy to check that $\bar{\mathscr{M}}=\bar{\mathscr{M}}_1 \oplus \bar{ \mathscr{M}}_2$. A similar argument as in the proof of Theorem~\ref{main22} can be used to show that there is a tracial positive linear map $\psi: \mathscr{M}_1 \to \mathcal{Z}\left( \mathscr{M}_1\right)$ (resp., $\tilde{\psi}: \tilde{\mathscr{M}}_1 \to \mathcal{Z}(\tilde{\mathscr{M}}_1)$) such that
$\psi(D)=\mathcal{Z}(\mathscr{M}) \cap \left({\rm weak}^*-{\rm cl}\{{C}_{D}\}\right) $ (resp., $\tilde{\psi}(E)=\mathcal{Z}(\bar{\mathscr{M}}) \cap \left({\rm weak}^*-{\rm cl}\{{C}_{E}\}\right) $)
for $D\in \mathscr{M}_1 $ (resp., $E \in \bar{\mathscr{M}}_1 $), where ${C}_{D}$ (resp., ${C}_{E}$) are the convex hull of $\{ U^* D U : U\in \mathscr{M}_1\text{ is unitary}\}$ in $\mathscr{M}_1$ (resp., the convex hull of $\{ V^* E V : V\in \bar{\mathscr{M}}_1\text{ is unitary}\}$ in $\bar{\mathscr{M}}_1$).
 Now define the map
\[
\Gamma : \mathscr{M}^{\oplus_{m=1}^\infty} \bigoplus \bar{\mathscr{M}}^{\oplus_{n=1}^\infty} \to \mathcal{Z}\left( \mathscr{M}\right)^{\oplus_{m=1}^\infty}\bigoplus \mathcal{Z}\left( \bar{\mathscr{M}}\right)^{\oplus_{n=1}^\infty}
\]
by
{\small\begin{align*}
\Gamma\left( (A_1, A_2, \ldots ) \oplus (B_1, B_2, \ldots )\right)
& = \Gamma \left((E_1\oplus F_1, E_2\oplus F_2, \ldots ) \oplus (D_1\oplus G_1, E_2\oplus G_2, \ldots )\right) \\
 &= \left((\psi(D_1), \psi(D_2), \ldots ) \oplus (\bar{\psi}(E_1), \bar{\psi}( E_2), \ldots )\right),
\end{align*}}
where $D_i \in \mathscr{M}_1, F_i \in \mathscr{M}_2, E_i \in \bar{\mathscr{M}}_1$, and $G_i \in \bar{\mathscr{M}}_2, i=1, 2,\ldots $.
Clearly, $\Gamma$ is a tracial positive linear map with the commutative range that ensures $\Gamma$ to be a completely positive map. Let $\iota : \mathscr{M}\to \mathscr{M}^{\oplus_{m=1}^\infty}\bigoplus \bar{\mathscr{M}}^{\oplus_{n=1}^\infty}$ be the embedding defined by $\iota (A)=(A, A, \ldots) \oplus (\bar{A}, \bar{A}, \ldots )$, where $\bar{A}$ is the
element in $\bar{\mathscr{M}}$ corresponding to $A\in \mathscr{M}$. Then obviously $\iota$ is a completely positive map. Therefore, we have a completely positive linear map $\Gamma \circ \iota : \mathscr{M} \to
\mathcal{Z}\left( \mathscr{M}\right)^{\oplus_{m=1}^\infty}\bigoplus \mathcal{Z}\left( \bar{\mathscr{M}}\right)^{\oplus_{n=1}^\infty}$. Moreover, for each pair of nonnegative integers $m$ and $ n$ with $m+n>0$, let $\Pi_{m,n}:\mathscr{M}^{\oplus_{m=1}^\infty}\bigoplus \bar{\mathscr{M}}^{\oplus_{n=1}^\infty}\to \mathscr{M}^{(m)}\bigoplus \bar{\mathscr{M}}^{(n)} $ be the mixed projection defined by \[\Pi_{m,n} \left((A_1, A_2, \ldots ) \oplus (B_1, B_2, \ldots )\right)= (A_1, A_2, \ldots, A_m, B_1, B_2, \ldots, B_n).\]
 Then $\Pi_{m,n}$ is a completely positive linear map. Define
$$\varphi_1 : \mathscr{A}\to \mathcal{Z}\left( \mathscr{A}\right)^{\oplus_{m=1}^\infty}\bigoplus \mathcal{Z}\left( \bar{\mathscr{A}}\right)^{\oplus_{n=1}^\infty} \, \, \mbox{and}\,\,
 \varphi_2 : \mathcal{Z}\left( \mathscr{A}\right)^{\oplus_{m=1}^\infty}\bigoplus \mathcal{Z}\left( \bar{\mathscr{A}}\right)^{\oplus_{n=1}^\infty} \to \mathscr{B}$$
by
\[ \varphi_1:=\Gamma \circ \iota \qquad \text{and} \qquad \varphi_2 :=\Phi(0)+ \sum _{\substack{m,n=0\\ m+n\geq 1}}^\infty \left(\Phi^{(m,n)}|_{\mathcal{Z}(\mathscr{A})^{\oplus_{m}}\bigoplus \mathcal{Z}( \bar{\mathscr{A}})^{\oplus_{n} }} \circ \Pi_{m,n}\right).\]
Then $\varphi_1$ is a tracial completely positive linear map and $ \varphi_2 $ is a tracial completely positive map. It remains to prove $\varphi_2 \circ \varphi_1 = \Phi$.
Let $A\in \mathscr{A}$ and $A=M\oplus N$, where $M\in \mathscr{M}_1$ and $N\in \mathscr{M}_2$. Then we have
$$\iota(A)= \left((A, A, \ldots ) \oplus (\bar{A}, \bar{A}, \ldots )\right) = ( M\oplus M, M\oplus N, \ldots) \oplus (\bar{M}\oplus \bar{N}, \bar{M}\oplus \bar{N}, \ldots),$$
 so
\begin{align*}
\varphi_1(A) &= (\Gamma \circ \iota) ( M\oplus N, M\oplus N, \ldots) \oplus (\bar{M}\oplus \bar{N}, \bar{M}\oplus \bar{N}, \ldots)
\\ & = \left(\psi(M), \psi(M), \ldots \right) \oplus \left(\bar{\psi}(\bar{M}), \bar{\psi}( \bar{M}), \ldots \right),
\end{align*}
where $\psi(M)$ and $\bar{\psi}( \bar{M})$ belong to the weak$^*$-closed convex hull of $C_{M}$ and $C_{\bar{M}}$, respectively.
 Let
{\small \[\left( \sum_{j=1}^{p_1}\lambda_{1,j} U^*_{1,j}M U_{1,j}, \sum_{j=1}^{p_2}\lambda_{2,j} U^*_{2,j}M U_{2,j}, \ldots \right) \oplus \left(\sum_{j=1}^{q_1}\mu_{1,j} V^{*}_{1,j}\bar{M} V_{1,j}, \sum_{j=1}^{q_2}\mu_{2,j} V^{*}_{2,j}\bar{M} V_{2,j}, \ldots \right) \]}
be an element of $\in C_{M}^{\oplus_{m=1}^\infty}\bigoplus C_{\bar{M}}^{\oplus_{n=1}^\infty}$.
For fixed nonnegative integers $m$ and $ n$ with $m+n>0$, we have
\begin{footnotesize}
\begin{align*}
&\left(\Phi^{(m,n)}\circ \Pi_{m,n}\right) \left( \left( \sum_{j=1}^{p_1}\lambda_{1,j} U^*_{1,j}M U_{1,j}, \ldots \right) \oplus \left(\sum_{j=1}^{q_1}\mu_{1,j} V^{*}_{1,j}\bar{M} V_{1,j},\ldots \right) \right) \\
& = \Phi^{(m,n)}\left( \sum_{j=1}^{p_1}\lambda_{1,j} U^*_{1,j}M U_{1,j}, \ldots,\sum_{j=1}^{p_m}\lambda_{m,j} U^*_{m,j}M U_{m,j}, \sum_{j=1}^{q_1}\mu_{1,j} V^{*}_{1,j}\bar{M} V_{1,j},\ldots, \sum_{j=1}^{q_n}\mu_{n,j} V^{*}_{n,j}\bar{M} V_{n,j}\right)
\\ & = \Phi^{(m,n)}\left( M^{(m)}, \bar{M}^{(n)}\right)
 \tag{Since $\Phi^{(m,n)}$ is a tracial positive multilinear map}
\\ & = \Phi_{m,n}(M).
\end{align*}
\end{footnotesize}
As
 $\left(\Phi^{(m,n)}|_{\mathcal{Z}(\mathscr{A})^{\oplus_{m}}\bigoplus \mathcal{Z}( \bar{\mathscr{A}})^{\oplus_{n} }} \circ \Pi_{m,n}\right)$'s are separate weak$^*$-continuous for all $m, n =0, 1, \ldots$ (by Lemma \ref{arbexte}), we get
 \[ \left(\Phi^{(m,n)}|_{\mathcal{Z}(\mathscr{A})^{\oplus_{m}}\bigoplus \mathcal{Z}( \bar{\mathscr{A}})^{\oplus_{n} }} \circ \Pi_{m,n}\circ \varphi_1\right) (M)= \Phi_{m,n} (M)
 \]
 for each $M \in \mathscr{M}_1$ and nonnegative integers $m$ and $n$ with $m+n>0$.
 Hence, by applying claim ($\star$) in the proof of Theorem \ref{main22}, we ensure that $\varphi_2 \circ \varphi_1(A)= \Phi(A)$.

\end{proof}

 \section{Applications in Quantum Information Theory}
 An important topic in quantum information theory is to study the covariance and variance concerning with a positive map between $C^*$-algebras. To see more about this topic and its development, we refer the reader to \cite{DM2,matias, petz,yangi}.
In this direction, a basic problem, which is of particular importance in statistical and quantum mechanics, is the variance-covariance inequality. In noncommutative frameworks, it is equivalent to the positivity of the so-called variance-covariance matrix \begin{footnotesize}
$ \begin{bmatrix}
{\rm Var}(A) & {\rm Cov}(A,B)\\
{\rm Cov}(B,A) & {\rm Var}(B)
\end{bmatrix}
$\end{footnotesize}; see \cite{ARAM, bht, matias}.
 We intend to extend the variance and covariance for positive multimaps and give some noncommutative uncertainty relations for them. In this regard, we have at least two choices.
 In the first perspective, since a (not necessarily linear or multilinear) positive multimap $\Phi:\bigoplus_{i=1}^k \mathscr{A}_i\to \mathscr{B}$ between $C^*$-algebras can be considered as a positive map from the $C^*$-algebra $\bigoplus_{i=1}^k \mathscr{A}_i$ to the $C^*$-algebra $\mathscr{B}$,
 the covariance and variance of $\textbf{A},\textbf{B}\in\bigoplus_{i=1}^k \mathscr{A}_i$ with respect to $\Phi$ can be naturally defined by
 $ {\rm Cov}_\Phi (\textbf{A},\textbf{B})= \Phi(\textbf{A}^*\textbf{B})- \Phi( \textbf{A}^*)\Phi(\textbf{B})$ and
 ${\rm Var}_\Phi(\textbf{A})= {\rm Cov}_\Phi (\textbf{A},\textbf{A})$, respectively.
 Matias \cite{matias} showed that if $\Phi$ is a unital $3$-positive map, then the variance-covariance matrix
 \[ \begin{bmatrix}
{\rm Var}_\Phi(\textbf{A}) & {\rm Cov}_\Phi(\textbf{A},\textbf{B})\\
{\rm Cov}_\Phi(\textbf{B},\textbf{A}) & {\rm Var}_\Phi(\textbf{B})
\end{bmatrix}
\]
is positive for all $\textbf{ A, B }\in \bigoplus_{i=1}^k \mathscr{A}_i$.

If $\textbf{A}$ and $\textbf{B}$ are self-adjoint operators (observables in quantum information), then it is worth mentioning that in the case that the range of $\Phi$ is commutative, the Heisenberg and the Schr\"{o}dinger uncertainty relations are just a result of the variance-covariance inequality. Indeed,
{\small
 \begin{align*} 0 &\leq \begin{bmatrix}
{\rm Var}_\Phi(\textbf{A}) & {\rm Cov}_\Phi(\textbf{A},\textbf{B})\\
{\rm Cov}_\Phi(\textbf{B},\textbf{A}) & {\rm Var}_\Phi(\textbf{B})
\end{bmatrix} \\ &= \begin{bmatrix}
{\rm Var}_\Phi(\textbf{A}) & {\rm Re Cov}_\Phi(\textbf{A},\textbf{B}) + {i} {\rm Im Cov}_\Phi(\textbf{A},\textbf{B}) \\
{\rm Re Cov}_\Phi(\textbf{B},\textbf{A}) + {i} {\rm Im Cov}_\Phi(\textbf{B},\textbf{A})& {\rm Var}_\Phi(\textbf{B})
\end{bmatrix}
.\end{align*}}
If the range of $\Phi$ is commutative, then
{\small
\begin{align*}
 {\rm Im Cov}_\Phi(\textbf{A},\textbf{B}) = \dfrac{\Phi(\textbf{AB})-\Phi(\textbf{BA})}{2i}
\, \quad \mbox{and}\quad\,
 {\rm Im Cov}_\Phi(\textbf{B},\textbf{A})=\dfrac{\Phi(\textbf{BA})-\Phi(\textbf{AB})}{2i}.
\end{align*}}
Hence,
{\small
\[
 \begin{bmatrix}
{\rm Var}_\Phi(\textbf{A}) & {\rm Re Cov}_\Phi(\textbf{A},\textbf{B}) + \frac{1}{2} \left(\Phi(\textbf{AB})-\Phi(\textbf{BA})\right) \\
{\rm Re Cov}_\Phi(\textbf{B},\textbf{A}) + \frac{1}{2} \left(\Phi(\textbf{BA})-\Phi(\textbf{AB})\right) & {\rm Var}_\Phi(\textbf{B})
\end{bmatrix} \geq 0.
\]
}
It follows from the positivity of the above matrix and the commutativity of the range of $\Phi$ that
{\small
\begin{align}
 {\rm Var}_\Phi(\textbf{A}) {\rm Var}_\Phi(\textbf{B}) &\geq \left|{\rm Re Cov}_\Phi(\textbf{A},\textbf{B}) + \frac{1}{2} \left(\Phi(\textbf{AB})-\Phi(\textbf{BA})\right)\right|^2 \nonumber \\ &= |{\rm Re Cov}_\Phi(\textbf{A},\textbf{B})|^2 + \frac{1}{4} \left|\Phi(\textbf{AB})-\Phi(\textbf{BA})\right|^2.
 \end{align}}
If $\Phi$ is linear, then the above inequality turns into
{\small
\begin{align}\label{uncert1}
 {\rm Var}_\Phi(\textbf{A}) {\rm Var}_\Phi(\textbf{B}) &\geq |{\rm Re Cov}_\Phi(\textbf{A},\textbf{B})|^2 + \frac{1}{4} \left|\Phi\left([\textbf{A},\textbf{B}]\right)\right|^2,
 \end{align}}
where $[X,Y]=XY-YX$ is the commutator of operators $X$ and $Y$. Inequality \eqref{uncert1} is just a generalization for the classical Schr\"{o}dinger uncertainty relation; see \cite{Li, DM2, yangi}.
Moreover, some versions of uncertainty relation inequalities, including a generalized variance and covariance in noncommutative setting, can be found in \cite{DM2}.

In this section, we aim to give some generalizations of the above inequalities for positive multilinear maps and completely positive maps between $C^*$-algebras as some applications of Theorems \ref{main22} and \ref{th2.11}. Note that if $\Phi:\mathscr{A} \to \mathscr{B}$ is a unital $m$-positive map (resp., a tracial $m$-positive map) and $P$ is a density operator, that is, $P\in \mathscr{A}^+$ and $\Phi(P)=I$, then the map $\Psi:\mathscr{A} \to \mathscr{B}$ defined by $\Psi(X)=\Phi(P^\frac{1}{2}XP^\frac{1}{2})$ (resp., $\Psi(X)=\Phi(PX)$) is also a unital $m$-positive map. Thus, we explore properties of such positive maps $\Phi$ and then apply them for maps such as $\Psi$, which are frequently appeared in quantum mechanics. We simply denote the map $\Psi$ by $\Phi(P^\frac{1}{2}\square P^\frac{1}{2})$ (resp., $\Phi(P\square )$).

It is known that if $\Phi:\mathscr{A} \to \mathscr{B}$ is a non-necessarily linear $3$-positive map with $\Phi(0)=0$, then $\Phi$ is superadditive on $\mathscr{A}^+$; see \cite[Theorem 3.5]{DM3}. In particular, if $A\geq B\geq 0$, then
\begin{equation}\label{super}
\Phi(A-B)\leq\Phi(A)-\Phi(B).
\end{equation}
Moreover, if $\Phi:\mathscr{A} \to \mathscr{B}$ is a $2m$-positive map, then $\Phi_2$ is monotone on $M_m(\mathscr{A})$.
\begin{definition}\label{decom}
We say that a unital (tracial) positive map $\Phi:\mathscr{A} \to \mathscr{B}$ between unital $C^*$-algebras enjoys the property $\mathscr{D}_m$ ($1\leq m\leq \infty$), if there are a compact Hausdorff space $\Omega$, a unital (tracial) positive linear map $\psi:\mathscr{A}\to C(\Omega)$, and a unital (tracial) $m$-positive (in the case when $m=\infty$, a completely positive) map $\varphi : C(\Omega) \to \mathscr{B}$ such that $\Phi= \varphi \circ \psi$.
\end{definition}
 Obviously, if $\Phi$ enjoys the property $\mathscr{D}_m$, then it is $m$-positive. Moreover, if $\Phi : \bigoplus_{i=1}^k \mathscr{A}_i \to \mathscr{B}$ is a tracial positive multilinear map satisfying one of the four properties in Theorem~\ref{main22}, then $\Phi$ and $\Phi(\textbf{P}\square)$, when $\textbf{P}\in\bigoplus_{i=1}^k \mathscr{A}_i$ is a density operator, have the property $\mathscr{D}_\infty$. Furthermore, Theorem~\ref{th2.11} shows that if $\mathscr{A}$ is a von Neumann algebra and $\mathscr{B}$ is a unital $C^*$-algebra, then every tracial completely positive map $\Phi: \mathscr{A} \to \mathscr{B}$ has the property $\mathscr{D}_\infty$ and also holds for $\Phi(P\square)$, when $P\in \mathscr{A}$ is a density operator.
\begin{theorem}\label{hiesenberg}
 Let $\Phi : \mathscr{A} \to \mathscr{B}$ be a positive map between unital $C^*$-algebras with $\Phi(0)=0$, and let $\Phi$ enjoy the property $\mathscr{D}_m$ for some $m\geq 3$. If ${A}, {B}\in\mathscr{A} $ are self-adjoint, then the following statements hold:
 \begin{itemize}
 \item[(i)] The variance-covariance matrix is positive.
\item[(ii)] An uncertainty relation holds as follows:
{\small
\[ \begin{bmatrix}
{\rm Var}_{\Phi}( {A}) & \Phi\left( \frac{1}{2}[ {A}, {B}]\right) \\
 \Phi\left(\frac{1}{2}[ {B}, {A}]\right) & {\rm Var}_{\Phi}( {B})
\end{bmatrix} \geq 0.\]}
{\small
\begin{align*}
\mathrm{(iii)} \ \left\|{\rm Var}_{\Phi}( {A})\right\| {\rm Var}_{\Phi}( {B})
\geq \left|\Phi\left( \frac{1}{2}\left[ {A}, {B}\right]\right)\right|^2
\quad\mbox{and}\quad
 \left\|{\rm Var}_{\Phi}( {B})\right\|{\rm Var}_{\Phi}( {A})
 \geq \left|\Phi\left( \frac{1}{2}\left[ {A}, {B}\right]\right)\right|^2.
 \end{align*}}
In particular, if the range of $\Phi$ is commutative, then
{\small\begin{align}\label{ki2}
 {\rm Var}_{\Phi}( {A}) {\rm Var}_{\Phi}( {B}) \geq \left|\Phi\left(\frac{1}{2}\left[ {A}, {B}\right]\right)\right|^2.
\end{align}}
\item[(iv)] If $\Phi$ enjoys the property $\mathscr{D}_m$ for some $m\geq 4$, then
{\small
\[ \begin{bmatrix}
{\rm Var}_{\Phi}( {A}) & \Phi(\frac{1}{2}\{ {A}, {B}\}) -\Phi( {A})\Phi( {B}) \\
 \Phi(\frac{1}{2}\{ {A}, {B}\}) -\Phi( {B})\Phi( {A}) & {\rm Var}_{\Phi}( {B})
\end{bmatrix} \geq 0,\]}
where $\{A,B\}=AB+BA$.
 \end{itemize}
\end{theorem}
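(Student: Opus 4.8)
The plan is to exploit the decomposition $\Phi = \varphi \circ \psi$ guaranteed by property $\mathscr{D}_m$ and reduce the entire statement to the commutative case, where $\varphi$ has commutative domain $C(\Omega)$. First I would treat the commutative range scalar-valued heuristic as the model: if $\Phi$ itself has commutative range, all the ``uncertainty'' content comes from applying positivity of a $2\times 2$ operator matrix together with Lemma~\ref{epsilpo}. So the key reduction is: construct an appropriate positive matrix in $M_2(\mathscr{A})$ (or $M_3$, to get the superadditivity slack), push it through $\Phi_n$, and read off the inequality.

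For part (i) and (ii), I would start from the observation that for self-adjoint $A,B$ the $2\times 2$ matrix
\[
\begin{bmatrix} A^2 & AB \\ BA & B^2 \end{bmatrix}
= \begin{bmatrix} A \\ B \end{bmatrix}\begin{bmatrix} A & B\end{bmatrix} \geq 0,
\]
and similarly
\[
\begin{bmatrix} A & 1 \\ 1 & 1\end{bmatrix}\ \text{type blocks give}\quad
\begin{bmatrix} A^2 & A \\ A & 1\end{bmatrix}\geq 0 ,
\]
so that after applying the $3$-positive map $\Phi$ (using property $\mathscr{D}_3$) and Lemma~\ref{epsilpo}, one gets $\Phi(A^2)\geq \Phi(A)^2$ and hence ${\rm Var}_\Phi(A)\geq 0$. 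To get the covariance matrix positive I would write out the $2\times 2$ block $\begin{bmatrix}(A-\Phi(A))&(B-\Phi(B))\end{bmatrix}^*\begin{bmatrix}\cdots\end{bmatrix}$ — but since $\Phi$ is nonlinear here I cannot literally subtract $\Phi(A)$ inside the argument, so instead I would use the already-cited result of Matias \cite{matias} (valid for unital $3$-positive maps; property $\mathscr{D}_m$ with $m\geq 3$ after composing $\varphi\circ\psi$, noting $\psi$ is linear so $\Phi$ inherits $3$-positivity) to obtain positivity of the variance-covariance matrix directly. For (ii), splitting ${\rm Cov}_\Phi(A,B)$ into real and imaginary parts and using that in the commutative-domain picture ${\rm Im\,Cov}_\Phi(A,B) = \frac{1}{2i}(\Phi(AB)-\Phi(BA)) = \Phi(\tfrac12[A,B])/i$ (again using that $\varphi$ has commutative range so this simplification of the computation done in the quantum-information section applies), I would extract the stated matrix. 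Actually the cleanest route for (ii) is to build $\begin{bmatrix}\tfrac12\{A,B\}-\text{(something)} & \tfrac12[A,B]\\ \tfrac12[B,A] & \cdots\end{bmatrix}$ from the positive matrix $\begin{bmatrix} A^2 & AB\\ BA & B^2\end{bmatrix}$ by the congruence with $\mathrm{diag}(1,1)$, noting $AB = \tfrac12\{A,B\}+\tfrac12[A,B]$ and symmetrizing; then applying $3$-positivity of $\Phi$ and using superadditivity \eqref{super} to drop the $\tfrac12\{A,B\}$ diagonal contributions down to ${\rm Var}_\Phi$.

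For part (iii), the main tool is the standard $2\times 2$ positive-matrix inequality: if $\begin{bmatrix} X & Y\\ Y^* & Z\end{bmatrix}\geq 0$ in a $C^*$-algebra, then $|Y|^2 = Y^*Y \leq \|X\|\,Z$ (from $X\leq \|X\| I$ and Lemma~\ref{epsilpo}). Applying this to the matrix from (ii) with $X = {\rm Var}_\Phi(A)$, $Z={\rm Var}_\Phi(B)$, $Y = \Phi(\tfrac12[A,B])$ gives $\|{\rm Var}_\Phi(A)\|\,{\rm Var}_\Phi(B) \geq |\Phi(\tfrac12[A,B])|^2$, and symmetrically. When the range is commutative, $\|{\rm Var}_\Phi(A)\|\,{\rm Var}_\Phi(B)$ and ${\rm Var}_\Phi(A)\,{\rm Var}_\Phi(B)$ comparison uses that commuting positive elements satisfy $ST \geq$ nothing automatically — rather, in the commutative case one directly has positivity of the $2\times2$ matrix forcing $XZ \geq |Y|^2$ (determinant condition, valid pointwise on the spectrum of the commutative $C^*$-algebra). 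For part (iv), I would repeat the argument of (ii) but now starting from the $4\times 4$-level positivity (or a $2\times2$ block over $M_2$), using $4$-positivity to legitimately ``complete the square'' with the nonlinear terms $\Phi(A)\Phi(B)$: specifically the matrix $\begin{bmatrix}A&B\end{bmatrix}^*\begin{bmatrix}A&B\end{bmatrix}$ together with a rank-one correction involving $I$, mapped by $\Phi_2$, yields $\begin{bmatrix}{\rm Var}_\Phi(A) & \Phi(\tfrac12\{A,B\})-\Phi(A)\Phi(B)\\ \ast & {\rm Var}_\Phi(B)\end{bmatrix}\geq 0$ after subtracting the outer-product of the column $(\Phi(A),\Phi(B))^T$, which is exactly where $2$-positivity (hence $4$-positivity of $\Phi$, i.e.\ $2$-positivity of $\Phi_2$) is needed and where the Choi-type inequality \eqref{kadison} is applied entrywise.

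The main obstacle I anticipate is handling the \emph{nonlinearity} of $\Phi$ cleanly: one cannot form $A - \Phi(A)\cdot 1$ inside the argument of $\Phi$, so every place where the classical proof ``centers'' the observable must be replaced either by (a) invoking Matias's matrix positivity as a black box, or (b) using superadditivity \eqref{super} and the Choi inequality \eqref{kadison}/\eqref{kadison2} to produce the required slack after applying $\Phi$. Getting the bookkeeping right so that exactly $m\geq 3$ suffices for (i)--(iii) and $m\geq 4$ for (iv) — i.e.\ verifying that no step secretly needs a higher level of positivity — is the delicate part; in particular the step in (iv) that replaces $\Phi(\tfrac12\{A,B\})$'s diagonal analogue by ${\rm Var}_\Phi(A)$ must use $4$-positivity (to run \eqref{kadison} for $\Phi_2$ on $2\times2$ matrices) rather than just $3$-positivity, which is precisely the reason (iv) is stated separately.
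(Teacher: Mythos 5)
Your overall strategy --- factor $\Phi=\varphi\circ\psi$ through $C(\Omega)$, prove the inequalities for the linear factor $\psi$, and transfer them via $\varphi$ using superadditivity \eqref{super} and the Choi inequality \eqref{kadison} --- is exactly the paper's, and your treatments of (i) (Matias as a black box) and of the norm trick in (iii) (replace a diagonal entry by its norm and apply Lemma \ref{epsilpo}) match the paper's proof. However, the execution of (ii) and (iv) as you describe it has a genuine gap: in several places you apply to $\Phi$ itself manipulations that are only legitimate for $\psi$. Concretely, your ``cleanest route'' for (ii) starts from $\begin{bmatrix} A^2 & AB\\ BA & B^2\end{bmatrix}\geq 0$, applies $\Phi_2$, and then tries to split the off-diagonal entry as $\Phi(AB)=\Phi(\tfrac12\{A,B\})+\Phi(\tfrac12[A,B])$ and ``drop'' the anticommutator part. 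This fails twice over: $\Phi$ is not linear, so $\Phi(AB)$ does not decompose that way; and even for a linear map with noncommutative range, subtracting the self-adjoint ``real part'' matrix from a positive matrix does not preserve positivity (the determinant/Re--Im argument needs a commutative range). Likewise your identity ${\rm Im\,Cov}_\Phi(A,B)=\Phi(\tfrac12[A,B])/i$ presupposes that $\Phi$ is linear with commutative range --- you even attribute the commutative range to $\varphi$, which has commutative \emph{domain}, not range. All of these steps are valid only for $\psi$, and the paper performs them there, obtaining the matrix inequality \eqref{avali} at the $\psi$-level from \eqref{uncert1}.

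What is then missing from your write-up is the explicit bridge that lifts the $\psi$-level inequality to $\Phi$: the paper proves $\varphi\bigl({\rm Var}_\psi(X)\bigr)\leq {\rm Var}_\Phi(X)$ by combining \eqref{kadison} for $\varphi$ (to get $\varphi(\psi(X))^2\leq\varphi(\psi(X)^2)$) with superadditivity \eqref{super} (to get $\varphi(\psi(X^2)-\psi(X)^2)\leq\varphi(\psi(X^2))-\varphi(\psi(X)^2)$, using $\Phi(0)=0$ and $3$-positivity of $\varphi$), and then applies $\varphi_2$ to \eqref{avali} and enlarges the diagonal. You name both ingredients in your final paragraph but never assemble them into this inequality, and without it there is no route from $\psi$ to $\Phi$. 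The same issue recurs in (iv): the symmetrized second-moment matrix $\begin{bmatrix} A^2 & \tfrac12\{A,B\}\\ \tfrac12\{A,B\} & B^2\end{bmatrix}$ you implicitly need is \emph{not} positive in a noncommutative $\mathscr{A}$ (it is the average of $\begin{bmatrix}A\\B\end{bmatrix}\begin{bmatrix}A & B\end{bmatrix}$ with its transpose-pattern companion, which can fail to be positive); the paper instead derives the corresponding inequality inside $M_2(C(\Omega))$ from the variance--covariance matrix of $\psi$, where symmetrization is harmless, and only then applies $\varphi_2$, using $2$-monotonicity of $\varphi_2$ and \eqref{kadison} for $\varphi_2$ --- which is where $m\geq 4$ enters, as you correctly anticipated.
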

\begin{proof}
It follows from the assumption that $\Phi$ is a $3$-positive map. Hence, (i) immediately follows (see the descriptions at the beginning of the section). Let $\psi: \mathscr{A} \to C(\Omega)$ and $\varphi :C(\Omega) \to \mathscr{B}$ be a completely positive linear and an $m$-positive maps, respectively, decomposing $\Phi$ as in Definition \ref{decom}. Since $\psi$ is a completely positive linear map and the range of $\psi$ is commutative, it follows from inequality \eqref{uncert1} that
{\small
 \begin{equation} \begin{bmatrix}\label{avali}
{\rm Var}_{\psi}( {A}) &\psi\left( \frac{1}{2}[ {A}, {B}]\right) \\
\psi\left( \frac{1}{2}[ {B}, {A}]\right) & {\rm Var}_{\psi}( {B})
\end{bmatrix} \geq 0
 \end{equation}}
 and
 {\small\begin{equation}\label{dovomi}
 \begin{bmatrix}
{\rm Var}_{\psi}( {A}) & {\rm Re Cov}_{\psi}( {A}, {B}) \\
{\rm Re Cov}_{\psi}( {B}, {A}) & {\rm Var}_{\psi}( {B})
\end{bmatrix} \geq 0.
\end{equation}}
We first show that
\begin{equation}\label{varicomp}
\varphi \left({\rm Var}_{\psi}( {X})\right) \leq {\rm Var}_{\Phi}\left( {X} \right)
\end{equation}
 for all self-adjoint elements $ {X}\in \mathscr{A}$. Since $\varphi$ is a $3$-positive map, we have
 {\small\begin{align*}
 {\rm Var}_{\Phi}\left( {X}\right)&=\Phi( {X}^2)- \Phi( {X})^2= \varphi\left(\psi\left( {X}^2\right)\right) -\big(\varphi(\psi \left( {X}\right)) \big)^2 \\ &\geq \varphi\left(\psi\left( {X}^2\right)\right) -\varphi \left(\psi ( {X})^2\right) \tag{by using the Choi type inequality \eqref{kadison} for $\varphi$}
 \\ & \geq \varphi\left(\psi( {X}^2) - \psi ( {X})^2\right) \tag{since $\psi( {X}^2) \geq \psi ( {X})^2\geq 0$, by using inequality \eqref{super} for $\varphi$}\\
 &= \varphi\left({\rm Var}_{\psi}( {X})\right).
\end{align*}}
 Since $\varphi$ is $3$-positive, by using \eqref{avali} and \eqref{varicomp}, we get
{\small \begin{align}
 0\leq \varphi_2\left( \begin{bmatrix}
{\rm Var}_{\psi}( {A}) &\psi\left( \frac{1}{2}[ {A}, {B}]\right) \\
\psi\left( \frac{1}{2}[ {B}, {A}]\right) & {\rm Var}_{\psi}( {B})
\end{bmatrix}\right) &= \begin{bmatrix}
\varphi\left({\rm Var}_{\psi}( {A})\right) &\Phi\left( \frac{1}{2}[ {A}, {B}]\right) \\
\Phi\left( \frac{1}{2}[ {B}, {A}]\right) & \varphi\left({\rm Var}_{\psi}( {B})\right)
\end{bmatrix} \nonumber\\ &\leq \begin{bmatrix}
{\rm Var}_{\Phi}( {A})& \Phi\left( \frac{1}{2}[ {A}, {B}]\right) \\
\Phi\left( \frac{1}{2}[ {B}, {A}]\right) & {\rm Var}_{\Phi}( {B}),
\end{bmatrix} \tag{by \eqref{varicomp}}
\end{align}}
which proves (ii). Next, note that the positivity of
{\small \begin{equation}\label{kia6}
\begin{bmatrix}
{\rm Var}_{\Phi, P}( {A})& \Phi\left( \frac{1}{2}[ {A}, {B}]\right) \\
\Phi\left( \frac{1}{2}[ {B}, {A}]\right) & {\rm Var}_{\Phi}( {B})
\end{bmatrix}
 \end{equation}}
 ensures that both matrices
 {\small \begin{equation}
 \begin{bmatrix}
{\rm Var}_{\Phi}( {A})& \Phi\left( \frac{1}{2}[ {A}, {B}]\right) \\
\Phi\left( \frac{1}{2}[ {B}, {A}]\right) & \|{\rm Var}_{\Phi}( {B})\|
\end{bmatrix} \qquad \text{and}\qquad \begin{bmatrix}
\|{\rm Var}_{\Phi, P}( {A})\|& \Phi\left( \frac{1}{2}[ {A}, {B}]\right) \\
\Phi\left( \frac{1}{2}[{B}, {A}]\right) & {\rm Var}_{\Phi}( {B})
\end{bmatrix}
 \end{equation}}
are positive. Applying Lemma \ref{epsilpo} and noting that $|\Phi\left( \frac{1}{2}[{B}, {A}]\right)|^2=|\Phi\left( \frac{1}{2}[{A}, {B}]\right)|^2$, we deduce (iii).
Moreover, If the range of $\Phi$ is commutative, then the positivity of matrix \eqref{kia6} immediately implies \eqref{ki2}.

 To see (iv), first note that the positivity of the matrix \eqref{dovomi} implies that
{\small
 \begin{equation}\label{sevomi}
 \begin{bmatrix}
\psi( {A}^2) & \psi\left(\frac{1}{2}\{ {A}, {B}\}\right) \\
\psi\left(\frac{1}{2}\{ {A}, {B}\}\right) & \psi( {B}^2)
\end{bmatrix} \geq \begin{bmatrix} \psi( {A})^2 & \psi( {A})\psi( {B}) \\
\psi( {B})\psi( {A}) & \psi( {B})^2
\end{bmatrix}.
\end{equation}}
To the right of the above inequality, an easy computation shows that
{\small
\[\begin{bmatrix} \psi( {A})^2 & \psi( {A})\psi( {B}) \\
\psi( {B})\psi( {A}) & \psi( {B})^2
\end{bmatrix}= \begin{bmatrix} \psi( {A}) \\
 \psi( {B})
\end{bmatrix} \begin{bmatrix} \psi( {A}) \\
 \psi( {B})
\end{bmatrix}^*,\]}
which ensures that both of matrices in inequality \eqref{sevomi} are positive. Since $\varphi$ is $4$-positive,
 $\varphi_2$ is $2$-monotone on $M_2(\mathscr{A})_+$. Hence,
{\small \begin{align*}
\begin{bmatrix}
\Phi( {A}^2) & \Phi\left(\frac{1}{2}\{ {A}, {B}\}\right) \\
\Phi\left(\frac{1}{2}\{ {A}, {B}\}\right) & \Phi( {B}^2)
\end{bmatrix}&=
\varphi_2\left(\begin{bmatrix}
\psi( {A}^2) & \frac{1}{2}\psi(\{ {A}, {B}\}) \\
\frac{1}{2}\psi(\{ {A}, {B}\}) & \psi( {B}^2)
\end{bmatrix}\right) \\ &\geq \varphi_2\left( \begin{bmatrix} \psi( {A})^2 & \psi( {A})\psi( {B}) \\
\psi( {B})\psi( {A}) & \psi( {B})^2
\end{bmatrix}\right) \\ \tag{Since $\varphi_2$ is monotone on positive elements} \\ &=\varphi_2\left( \begin{bmatrix} \psi( {A}) \\
 \psi( {B})
\end{bmatrix} \begin{bmatrix} \psi( {A}) \\
 \psi( {B})
\end{bmatrix}^*\right)
\\ & \geq
\varphi_2\left( \begin{bmatrix} \psi( {A}) \\
 \psi( {B})
\end{bmatrix}\right) \varphi_2\left( \begin{bmatrix} \psi( {A}) \\
 \psi( {B})
\end{bmatrix}^*\right)
 \\ \tag{by applying the Choi type inequality \eqref{kadison} for $\varphi_2$}\\ &=
 \begin{bmatrix} \Phi( {A}) \\
 \Phi( {B})
\end{bmatrix} \begin{bmatrix} \Phi( {A}) \\
 \Phi( {B})
\end{bmatrix}^*
\\ &= \begin{bmatrix} \Phi( {A})^2 & \Phi( {A})\Phi( {B}) \\
\Phi( {B})\Phi( {A}) & \Phi( {B})^2
\end{bmatrix},
\end{align*}}
which gives (iii).
\end{proof}

\subsection{Some Applications in Skew Information}
The skew entropy and the problem of its convexity are extensively studied in the entropy theory. The skew entropy can be considered as a kind of the degree for non-commutativity between a quantum state (density operator) and an observable (self-adjoint operator).
 If $A$ and $B$ are self-adjoint elements and $\varrho$ is a density operator, then a one-parameter correlation and the Wigner--Yanase skew information (is known as the Wigner--Yanase--Dyson skew information) for $A$ and $B$ are defined by
 \begin{eqnarray*}
{\rm Corr}_{\varrho}^{\alpha}(A,B):={\rm Tr}(\varrho AB)-{\rm Tr}(\varrho^{1-\alpha} A\varrho^{ \alpha}B) \mbox{\quad and \quad} {\rm I}_{\varrho}^{\alpha}(A):={\rm Corr}_{\varrho}^{\alpha}(A,A),
\end{eqnarray*}
 respectively, where $\alpha \in [0,1]$; see \cite{yangi}. It is known that
 \begin{eqnarray}\label{skew1}
\left| {\rm Re(Corr}_\varrho^{\alpha} (A,B))\right|^2 \leq {\rm I}_{\varrho}^\alpha (A){\rm I}_{\varrho}^\alpha (B).
\end{eqnarray}
Two continuous real-valued functions $f$ and $g$ defined on a set $D$ are called of same monotonic if $(f(x)-f(y)) (g(x)-g(y)) \geq 0$ for every $x,y \in D$. The authors of \cite{DM2} gave a generalization for the Wigner--Yanase--Dyson skew information as follows. Let $\Phi : \mathscr{A}\to \mathscr{B}$ be a tracial positive linear map between von Neumann algebras, and let $\varrho\in \mathscr{A}$ be a density operator. For each pair of same monotonic functions $f$ and $g$, which are continuous real-valued functions defining on an interval containing the spectrum of $\varrho$, the generalized correlation and the generalized Wigner--Yanase--Dyson skew information of self-adjoint elements $A$ and $B$ are defined by
{\small\begin{align*}
{\rm Corr}_{\varrho,\Phi}^{f,g}(A,B)&:= \Phi\left(f(\varrho) g(\varrho) AB \right)-\Phi\left(f(\varrho) A g(\varrho) B \right) \quad \text{and} \quad {\rm I}_{\varrho,\Phi}^{f,g}(A):={\rm Corr}_{\varrho,\Phi}^{f,g}(A,A)\,,
\end{align*}}
respectively. Moreover, a generalization of \eqref{skew1} in the noncommutative framework is given by
{\small\begin{equation}
\begin{bmatrix}\label{cauchcorr}
{\rm I}_{\varrho,\Phi}^{f,g}(A) & {\rm Re} \left({\rm Corr}_{\varrho,\Phi}^{f,g}(A,B)\right) \\ {\rm Re} \left({\rm Corr}_{\varrho,\Phi}^{f,g}(A,B)\right) & {\rm I}_{\varrho,\Phi}^{f,g}(B)
\end{bmatrix}\geq 0.
\end{equation}}

Here, we want to relax the linearity condition. We extend \eqref{cauchcorr} for tracial positive maps.
\begin{theorem}
 Let $\Phi : \mathscr{M} \to \mathscr{N}$ be a tracial positive map between von Neumann algebras with $\Phi(0)=0$ and let $\Phi$ enjoy the property $\mathscr{D}_m$ for some $m\geq 4$. If $\varrho$ is a density operator and ${A}, {B}\in \mathscr{M}$ are self-adjoint, then
 \[ \begin{bmatrix}
{\rm I}_{\varrho,\Phi}^{f,g}(A) & \star\\ \star & {\rm I}_{\varrho,\Phi}^{f,g}(B)
\end{bmatrix}\geq 0,
 \]
 for each pair of same monotonic functions $f$ and $g$ defined on an interval containing the spectrum of $\varrho$, where
{\small
 $$\star=\Phi\left(\frac{1}{2}f(\varrho) g(\varrho) \{A,B\} \right) - \Phi\left(\frac{f(\varrho) A g(\varrho) B +f(\varrho) B g(\varrho) A}{2} \right).$$}
 \end{theorem}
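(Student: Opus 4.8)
The plan is to pass to the linear case through the $\mathscr{D}_m$-decomposition of $\Phi$ and then transport the resulting matrix inequality across the nonlinear factor, using the monotonicity that the hypothesis $m\ge4$ guarantees. Write $\Phi=\varphi\circ\psi$ as in Definition~\ref{decom}, where $\psi:\mathscr{M}\to C(\Omega)$ is a tracial positive linear map and $\varphi:C(\Omega)\to\mathscr{N}$ is a tracial $m$-positive map; since the range of $\psi$ is commutative, $\psi$ is completely positive, and $\varphi(0)=\varphi(\psi(0))=\Phi(0)=0$. Introduce the two $2\times2$ matrices over $C(\Omega)$
\[
M_1=\begin{bmatrix} \psi(f(\varrho)g(\varrho)A^2) & \tfrac12\psi(f(\varrho)g(\varrho)\{A,B\}) \\ \tfrac12\psi(f(\varrho)g(\varrho)\{A,B\}) & \psi(f(\varrho)g(\varrho)B^2)\end{bmatrix},
\]
\[
M_2=\begin{bmatrix} \psi(f(\varrho)Ag(\varrho)A) & \tfrac12\psi(f(\varrho)Ag(\varrho)B+f(\varrho)Bg(\varrho)A) \\ \tfrac12\psi(f(\varrho)Ag(\varrho)B+f(\varrho)Bg(\varrho)A) & \psi(f(\varrho)Bg(\varrho)B)\end{bmatrix};
\]
traciality of $\psi$ together with $f(\varrho)g(\varrho)=g(\varrho)f(\varrho)$ shows the off-diagonal entries are self-adjoint, so $M_1,M_2$ are self-adjoint. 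Applying $\varphi$ entrywise and using $\varphi\circ\psi=\Phi$, one reads off that
\[
\varphi_2(M_1)-\varphi_2(M_2)=\begin{bmatrix} {\rm I}_{\varrho,\Phi}^{f,g}(A) & \star \\ \star & {\rm I}_{\varrho,\Phi}^{f,g}(B)\end{bmatrix},
\]
so it suffices to establish $M_1\ge M_2\ge0$ and then push this through $\varphi_2$.

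The relation $M_1\ge M_2$ is precisely the linear skew-information inequality \eqref{cauchcorr} applied to $\psi$: since $\psi$ is a tracial positive linear map (if one insists on a von Neumann codomain, compose with the canonical embedding $C(\Omega)\hookrightarrow C(\Omega)^{**}$, which changes neither traciality nor positivity), \eqref{cauchcorr} gives positivity of the matrix with diagonal entries ${\rm I}_{\varrho,\psi}^{f,g}(A),{\rm I}_{\varrho,\psi}^{f,g}(B)$ and off-diagonal entry ${\rm Re}\,{\rm Corr}_{\varrho,\psi}^{f,g}(A,B)$; a short computation, again using traciality and the commutativity of $f(\varrho)$ and $g(\varrho)$, identifies this matrix with $M_1-M_2$.

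The positivity $M_2\ge0$ is the heart of the matter. Set $P=f(\varrho)^{1/2}Ag(\varrho)^{1/2}$ and $Q=f(\varrho)^{1/2}Bg(\varrho)^{1/2}$, which is legitimate because $f(\varrho)$ and $g(\varrho)$ are positive (as in all the standard instances of skew information; in the general real-valued case one must argue differently at this point). Then $R^*R\ge0$ for the row $R=[\,P\ \ Q\,]$, and applying the positive map $\psi_2$ gives a positive $2\times2$ matrix over $C(\Omega)$ whose entries, after invoking traciality and $f(\varrho)g(\varrho)=g(\varrho)f(\varrho)$, are $\psi(f(\varrho)Ag(\varrho)A)$ and $\psi(f(\varrho)Bg(\varrho)B)$ on the diagonal and the adjoint pair $\psi(f(\varrho)Bg(\varrho)A)$, $\psi(f(\varrho)Ag(\varrho)B)$ off the diagonal. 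Replacing this off-diagonal pair by its real part produces exactly $M_2$, and this operation preserves positivity of a $2\times2$ matrix over the commutative $C^*$-algebra $C(\Omega)$ (pointwise, the relevant $2\times2$ determinant condition only improves when an entry is replaced by its real part). Hence $M_2\ge0$, and combined with the previous step, $M_1\ge M_2\ge0$.

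Finally, since $m\ge4$ the map $\varphi$ is $4$-positive, hence $2$-monotone (a $2n$-positive map is monotone on the positive $n\times n$ matrices, as recalled above); applying this to $M_1\ge M_2\ge0$ yields $\varphi_2(M_1)\ge\varphi_2(M_2)\ge0$, i.e.\ the asserted inequality by the first paragraph. The main obstacle is the positivity of $M_2$: it is what permits the nonlinear, merely $m$-positive factor $\varphi$ to act on a genuinely positive matrix so that monotonicity applies, and it is exactly where the hypothesis $m\ge4$—ensuring $2$-monotonicity of $\varphi$ rather than only superadditivity—is used; a secondary delicate point is the reliance on $f(\varrho),g(\varrho)\ge0$ for the factorization defining $P$ and $Q$.
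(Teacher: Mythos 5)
Your proposal is correct and follows essentially the same route as the paper: decompose $\Phi=\varphi\circ\psi$ via $\mathscr{D}_m$, apply the linear inequality \eqref{cauchcorr} to $\psi$ to get $M_1\ge M_2$, prove $M_2\ge 0$ by factoring through $f(\varrho)^{1/2}Ag(\varrho)^{1/2}$ and symmetrizing (your ``real part'' step is exactly the paper's $\frac12(Y+Y^{T})$, justified by commutativity of the range of $\psi$), and finish with the $2$-monotonicity of $\varphi_2$ coming from $4$-positivity. Your remark that the factorization silently uses $f(\varrho),g(\varrho)\ge 0$ applies equally to the paper's own argument.
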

\begin{proof}
 Let $\psi: \mathscr{M} \to C(\Omega)$ and $\varphi :C(\Omega) \to \mathscr{N}$ be tracial completely positive linear and tracial $m$-positive maps decomposing $\Phi$ as in Definition \ref{decom}, respectively. Since $\psi$ is a tracial positive linear map, by applying inequality \eqref{cauchcorr} for $\psi$, we get
{\small \begin{align}\label{coor1}
\nonumber & \begin{bmatrix}
\psi\left(f(\varrho) g(\varrho) A^2 \right) & \frac{1}{2}\psi\left(f(\varrho) g(\varrho) \{A,B\} \right) \\ \frac{1}{2}\psi\left(f(\varrho) g(\varrho) \{A,B\} \right) & \psi\left(f(\varrho) g(\varrho) B^2 \right)
\end{bmatrix}\\ &\geq \begin{bmatrix}
\psi\left(f(\varrho) A g(\varrho) A \right) & \frac{1}{2}\left(\psi\left(f(\varrho) A g(\varrho) B \right) + \psi\left(f(\varrho) B g(\varrho) A \right) \right) \\ \frac{1}{2} \left( \psi\left(f(\varrho) B g(\varrho) A \right) + \psi\left(f(\varrho) A g(\varrho) B \right) \right) & \psi\left(f(\varrho) B g(\varrho) B\right)
\end{bmatrix}=: {X}.
 \end{align}}
 It is easy to see that the matrix ${X}$ to the right side of inequality \eqref{coor1} is positive. Indeed,
{\small \begin{align*}
0 &\leq \psi_2 \left(\begin{bmatrix}
f(\varrho)^\frac{1}{2} A g(\varrho)^\frac{1}{2}\\ f(\varrho)^\frac{1}{2} B g(\varrho)^\frac{1}{2}
\end{bmatrix} \begin{bmatrix}
f(\varrho)^\frac{1}{2} A g(\varrho)^\frac{1}{2}\\ f(\varrho)^\frac{1}{2} B g(\varrho)^\frac{1}{2}
\end{bmatrix}^* \right)\\ \tag{since $\psi$ is completely positive}\\ &= \psi_2 \left( \begin{bmatrix}
f(\varrho)^\frac{1}{2} A g(\varrho) A f(\varrho)^\frac{1}{2} & f(\varrho)^\frac{1}{2} A g(\varrho) B f(\varrho)^\frac{1}{2} \\ f(\varrho)^\frac{1}{2} B g(\varrho) A f(\varrho)^\frac{1}{2} & f(\varrho)^\frac{1}{2} B g(\varrho) B f(\varrho)^\frac{1}{2}
\end{bmatrix} \right)
 \\ &=\begin{bmatrix}\psi\left(f(\varrho) A g(\varrho) A \right) & \psi\left(f(\varrho) A g(\varrho) B \right) \\ \psi\left(f(\varrho) B g(\varrho) A \right) & \psi\left(f(\varrho) B g(\varrho) B\right)
\end{bmatrix}=:{Y}.
\end{align*}}
In the last equality, we employ the fact that $\psi$ is tracial.
From the positivity of the matrix ${Y}$ and the commutativity of the range of $\psi$, we infer that ${Y}^T\geq 0$, where $Y^T$ stands for the transpose of $Y$. Hence, we have
\[ {X}= \frac{1}{2} {Y} +\frac{1}{2} {Y}^T \geq 0.\]
Now, since $\varphi$ is $4$-positive (which ensures $\varphi_2$ is monotone on positive elements), by using \eqref{coor1}, we obtain
{\small \begin{align*}
& \begin{bmatrix}
 \Phi\left(f(\varrho) g(\varrho) A^2 \right) & \Phi\left(\frac{1}{2}f(\varrho) g(\varrho) \{A,B\} \right) \\ \Phi\left(\frac{1}{2}f(\varrho) g(\varrho) \{A,B\} \right) & \Phi\left(f(\varrho) g(\varrho) B^2 \right)
\end{bmatrix}\\ & \qquad =\varphi_2\left(\begin{bmatrix}
\psi\left(f(\varrho) g(\varrho) A^2 \right) & \psi\left(\frac{1}{2}f(\varrho) g(\varrho) \{A,B\} \right) \\ \psi\left(\frac{1}{2}f(\varrho) g(\varrho) \{A,B\} \right) & \psi\left(f(\varrho) g(\varrho) B^2 \right)
\end{bmatrix}\right) \\ &\qquad \geq \varphi_2\left( \begin{bmatrix}
\psi\left(f(\varrho) A g(\varrho) A \right) & \psi\left(\frac{f(\varrho) A g(\varrho) B +f(\varrho) B g(\varrho) A}{2} \right) \\\psi\left(\frac{f(\varrho) A g(\varrho) B +f(\varrho) B g(\varrho) A}{2} \right) & \psi\left(f(\varrho) B g(\varrho) B\right)
\end{bmatrix} \right) \\ &\qquad = \begin{bmatrix}
\Phi\left(f(\varrho) A g(\varrho) A \right) & \Phi\left(\frac{f(\varrho) A g(\varrho) B +f(\varrho) B g(\varrho) A}{2} \right) \\ \Phi\left(\frac{f(\varrho) A g(\varrho) B +f(\varrho) B g(\varrho) A}{2} \right) & \Phi\left(f(\varrho) B g(\varrho) B\right)
\end{bmatrix}.
 \end{align*}}
\end{proof}
 \subsection{Another view to uncertainty relations }
 In the second view, we have to consider two important points. Firstly, note that if the Hilbert spaces $\mathscr{H}_1, \ldots, \mathscr{H}_k$ are the state spaces, then the composite system is described by the tensor product Hilbert space $\mathscr{H}_1\bigotimes \cdots \bigotimes \mathscr{H}_k$; see \cite{petz}.
 Hence, if $A_1 \in \mathbb{B}(\mathscr{H}_1), \ldots, A_k\in \mathbb{B}(\mathscr{H}_k)$, then $\tilde{\textbf{A}}=A_1\otimes \cdots \otimes A_k \in \mathbb{B}(\mathscr{H}_1) \bigotimes \cdots \bigotimes \mathbb{B}(\mathscr{H}_k)$ is an observable in the composite system. On the other hand, every completely positive multilinear map $\Phi : \bigoplus_{i}^k \mathbb{B}(\mathscr{H}_i) \to \mathscr{B}$ can be extended to a positive linear map $\tilde{\Phi} : \bigotimes_{i}^k \mathbb{B}(\mathscr{H}_i) \to \mathscr{B}$ with respect to projective $C^*$-tensor product; see \cite[Corollary 1.3]{HIAI} (We remark that every completely bounded multilinear map of type 1 $\Psi : \bigoplus_{i}^k \mathbb{B}(\mathscr{H}_i) \to \mathscr{B}$ can be extended to a unique completely bounded linear map $\hat{\Psi} : \bigotimes_{i}^k \mathbb{B}(\mathscr{H}_i) \to \mathscr{B}$ with respect to the Haagerup $C^*$-norm; see \cite[Proposition 3.1]{smith}). Secondly, in the classical Heisenberg uncertainty relation
 \begin{align}\label{uncert2}
 {\rm Var}_{\tilde{\Phi}}(\tilde{\textbf{A}}) {\rm Var}_{\tilde{\Phi}}(\tilde{\textbf{B}}) &\geq \dfrac{1}{4} \left|\tilde{\Phi}\left([\tilde{\textbf{A}},\tilde{\textbf{B}}]\right)\right|^2,
 \end{align}
 if $\tilde{\textbf{A}}$ and $\tilde{\textbf{B}}$ commute, then inequality \eqref{uncert2} is trivial. In particular, if we take
 $$ \tilde{\textbf{A}}=(I_1\otimes \cdots\otimes I_i\otimes A\otimes I_{i+1}\otimes \cdots\otimes I_k) \text{\ and\ } \tilde{\textbf{B}}=(I_1\otimes \cdots\otimes I_j\otimes B\otimes I_{j+1}\otimes \cdots\otimes I_k)\quad (i\neq j),$$ as some observables in the composite system, then $\tilde{\textbf{A}}$ and $\tilde{\textbf{B}}$ commute, and the right side of inequality \eqref{uncert2} is zero. Therefore, the linear extension of completely positive multilinear maps to tensor product spaces does not provide sufficient information about the uncertainty of observables such $\tilde{\textbf{A}}$ and $\tilde{\textbf{B}}$ in composite systems. Hence, to present an uncertainty type inequality that is not trivial for such $\tilde{\textbf{A}}$ and $\tilde{\textbf{B}}$ in composite systems, we need to define a variance and a covariance that include the components of composite systems.\\
Let $ \Phi: \bigoplus_{i=1}^k \mathscr{A}_i \to \mathscr{B}$ be a unital multimap. If $1\leq i,j \leq k$, then for simplicity of notation, we define the map $\Phi_{(i,j)} : \mathscr{A}_i \bigoplus \mathscr{A}_j \to \mathscr{B}$ by
\[\Phi_{(i,j)}(A,B) = \Phi( (I_1, \ldots, I_{i-1}, A,I_{i+1},\ldots, I_k) (I_1, \ldots, I_{j-1}, B, I_{j+1},\ldots, I_k)).\] Moreover, we define the map $\Phi_{(i)} : \mathscr{A}_i \to \mathscr{B}$ by
\[\Phi_{(i)}(A)= \Phi(I_1,\ldots, I_{i-1}, A, I_{i+1}, \ldots, I_k).\]
It is clear that if $\Phi$ is a positive multilinear map, then $\Phi_{(i,j)}$ is a positive bilinear map and $\Phi_{(i)}$ is a positive linear map. \\
It is known that if $\textbf{x}= \begin{bmatrix}
x_1, & x_2, & \ldots, & x_n
\end{bmatrix}$ and $\textbf{y}= \begin{bmatrix}
y_1, & y_2, & \ldots, & y_n
\end{bmatrix}$ are two random vectors in a classical probability space, then the covariance of $\textbf{x}$ and $\textbf{y}$ is defined by the matrix
\[ {\rm Cov}(\textbf{x},\textbf{y})=\begin{bmatrix}
{\rm Cov}(x_i,y_j)
\end{bmatrix}_{i,j=1}^n.\]
From the above definition, we take the idea of the following definition.
 \begin{definition}
 Let $\Phi: \bigoplus_{i=1}^k \mathscr{A}_i \to \mathscr{B}$ be a positive unital multimap and let $\textbf{A}=(A_1,\ldots, A_k), \textbf{B}=(B_1,\ldots, B_k) \in \bigoplus_{i=1}^k \mathscr{A}_i$. Then the partial covariance of $\textbf{A}$ and $\textbf{B}$ and the partial variance of $\textbf{A}$ with respect to $\Phi$ are defined as
 \begin{align}\label{covm}
 \nonumber {\rm ^pC ov}_\Phi (\textbf{A},\textbf{B})&=\begin{bmatrix} \Phi_{(i,j)}(A_i^*, B_j)- \Phi_{(i)}( A_i^*)\Phi_{(j)} (B_j)
 \end{bmatrix}_{i,j=1}^n, \\
 {\rm ^pVar}_\Phi(\textbf{A})&= {\rm ^pCov}_\Phi (\textbf{A},\textbf{A}).
 \end{align}
 \end{definition}

We aim to prove the partial variance-covariance inequality for positive multimaps.
 We denote the partial variance-covariance matrix of two self-adjoint operators $\textbf{A}$ and $\textbf{B}$ concerning a positive map $\Phi$ by $ ^p\mathcal{V}\mathcal{C}_\Phi(\textbf{A},\textbf{B})$.\\
\begin{theorem}\label{varcovar}
Let $ \Phi: \bigoplus_{i=1}^k \mathscr{A}_i \to \mathscr{B}$ be a positive multimap between unital $C^*$-algebras with $\Phi(\textbf{0})=\textbf{0}$, and let $\textbf{A}=(A_1,\ldots, A_k), \textbf{B}=(B_1,\ldots, B_k) \in \bigoplus_{i=1}^k \mathscr{A}_i$.

\begin{itemize}
 \item[(i)] If $\Phi$ is $(2k+1)$-positive, then
\[ ^p\mathcal{V}\mathcal{C}_\Phi(\textbf{A},\textbf{B})=\begin{bmatrix}
{\rm ^pVar}_\Phi (\textbf{A}) & {\rm ^pCov}_\Phi (\textbf{A},\textbf{B}) \\
{\rm ^pCov}_\Phi (\textbf{B},\textbf{A}) & {\rm ^pVar}_\Phi (\textbf{B})
\end{bmatrix} \geq 0.\]

\item[(ii)] If $\Phi$ is $(k+1)$-positive, then ${\rm ^pVar}_\Phi(\textbf{A}) \geq 0$ for every $\textbf{A}\in \bigoplus_{i=1}^k \mathscr{A}_i$. In particular, if $\Phi$ is a positive multilinear map, then ${\rm ^pVar}_\Phi(\textbf{A}) \geq 0$ for every normal element $\textbf{A}\in \bigoplus_{i=1}^k \mathscr{A}_i$.
\end{itemize}
\end{theorem}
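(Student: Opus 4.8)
The plan is to recognize the partial variance--covariance matrix as an ordinary (Matias-type) variance--covariance matrix of a finite family of elements of the $C^*$-algebra $\mathscr{C}:=\bigoplus_{i=1}^{k}\mathscr{A}_{i}$, and then to run the standard argument for such matrices: feed a positive Gram-type operator matrix through the appropriate amplification of $\Phi$ and take a Schur complement against the corner. First I would introduce, for $1\le i\le k$ and $C_{i}\in\mathscr{A}_{i}$, the embedded element $\widehat{C}_{i}:=(I_{1},\dots,I_{i-1},C_{i},I_{i+1},\dots,I_{k})\in\mathscr{C}$. Tracking which identity occupies which slot, $\widehat{C}_{i}^{*}\widehat{D}_{j}=(I,\dots,C_{i}^{*}D_{j},\dots,I)$ when $i=j$ and $\widehat{C}_{i}^{*}\widehat{D}_{j}=(I,\dots,C_{i}^{*},\dots,D_{j},\dots,I)$ when $i\ne j$, so that $\Phi(\widehat{C}_{i}^{*}\widehat{D}_{j})=\Phi_{(i,j)}(C_{i}^{*},D_{j})$ and $\Phi(\widehat{C}_{i})=\Phi_{(i)}(C_{i})$. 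Thus, regarding $\Phi$ as a (not necessarily linear) positive map on the single $C^*$-algebra $\mathscr{C}$, the matrix ${\rm ^pCov}_\Phi(\textbf{A},\textbf{B})$ is exactly the covariance matrix of the families $\{\widehat{A}_{i}\}_{i=1}^{k}$ and $\{\widehat{B}_{j}\}_{j=1}^{k}$ with respect to $\Phi$, and ${\rm ^pVar}_\Phi(\textbf{A})$ is the corresponding variance matrix. Throughout, $\Phi$ being unital gives $\Phi(\textbf{I})=I_{\mathscr{B}}$.

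For part (i) I would form the row $\textbf{W}:=\begin{bmatrix}\widehat{A}_{1} & \cdots & \widehat{A}_{k} & \widehat{B}_{1} & \cdots & \widehat{B}_{k} & \textbf{I}\end{bmatrix}\in M_{1,2k+1}(\mathscr{C})$. Then $\textbf{W}^{*}\textbf{W}\ge 0$ in $M_{2k+1}(\mathscr{C})$, so $(2k+1)$-positivity of $\Phi$ yields $\Phi_{2k+1}(\textbf{W}^{*}\textbf{W})\ge 0$ in $M_{2k+1}(\mathscr{B})$. By the identifications above, and using $\widehat{C}_{i}\textbf{I}=\widehat{C}_{i}$, $\textbf{I}^{*}\textbf{I}=\textbf{I}$ and $\Phi(\textbf{I})=I_{\mathscr{B}}$, this positive matrix is $\begin{bmatrix}M & v\\ v^{*} & I_{\mathscr{B}}\end{bmatrix}$, where $M$ is the $2k\times 2k$ matrix whose four $k\times k$ blocks are $\big[\Phi_{(i,j)}(A_{i}^{*},A_{j})\big]$, $\big[\Phi_{(i,j)}(A_{i}^{*},B_{j})\big]$, $\big[\Phi_{(i,j)}(B_{i}^{*},A_{j})\big]$, $\big[\Phi_{(i,j)}(B_{i}^{*},B_{j})\big]$, and $v$ is the $2k$-column obtained by stacking $\big[\Phi_{(i)}(A_{i}^{*})\big]$ over $\big[\Phi_{(i)}(B_{i}^{*})\big]$; that the last block-row equals $v^{*}$ is automatic, a positive matrix being self-adjoint. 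Since the corner $I_{\mathscr{B}}$ is invertible, Lemma~\ref{epsilpo} gives $M\ge v(I_{\mathscr{B}}+\varepsilon I)^{-1}v^{*}=(1+\varepsilon)^{-1}vv^{*}$ for all $\varepsilon>0$, and letting $\varepsilon\to 0$ yields $M\ge vv^{*}$. Since $M-vv^{*}$ is precisely $^p\mathcal{V}\mathcal{C}_\Phi(\textbf{A},\textbf{B})$ — here one uses $\Phi_{(i)}(A_{i}^{*})^{*}=\Phi_{(i)}(A_{i})$, again a consequence of self-adjointness — part (i) follows.

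Part (ii) proceeds in the same fashion. If $\Phi$ is $(k+1)$-positive, the shorter row $\textbf{W}:=\begin{bmatrix}\widehat{A}_{1} & \cdots & \widehat{A}_{k} & \textbf{I}\end{bmatrix}\in M_{1,k+1}(\mathscr{C})$ and the amplification $\Phi_{k+1}$, followed by the Schur complement against $\Phi(\textbf{I})=I_{\mathscr{B}}$, give $\big[\Phi_{(i,j)}(A_{i}^{*},A_{j})\big]\ge\big[\Phi_{(i)}(A_{i}^{*})\Phi_{(j)}(A_{j})\big]$, that is ${\rm ^pVar}_\Phi(\textbf{A})\ge 0$, for every $\textbf{A}$. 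For the last assertion, a positive multilinear map need not even be $2$-positive, so I would instead restrict $\Phi$ to $\mathscr{D}:=\bigoplus_{l=1}^{k}C^*(A_{l},I_{l})\subseteq\mathscr{C}$: when $\textbf{A}$ is normal, each $A_{l}$ is normal, hence each $C^*(A_{l},I_{l})$ is commutative, so $\Phi|_{\mathscr{D}}$ is a unital multilinear map whose domain is a direct sum of commutative unital $C^*$-algebras, and therefore completely positive by \cite[Proposition 12]{shon}, in particular $(k+1)$-positive. Since $\widehat{A}_{i}$, $\textbf{I}$ and all products $\widehat{A}_{i}^{*}\widehat{A}_{j}$ lie in $\mathscr{D}$, applying the previous step to $\Phi|_{\mathscr{D}}$ gives ${\rm ^pVar}_\Phi(\textbf{A})\ge 0$.

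I expect the core Schur-complement step to be entirely routine; the two places that call for care are, first, the bookkeeping matching the entries of $\Phi_{2k+1}(\textbf{W}^{*}\textbf{W})$ with the data $\Phi_{(i,j)}$ and $\Phi_{(i)}$ — where the self-adjointness of the amplified positive matrix is exactly what furnishes the ``$v^{*}$'' block-row together with the adjoint relations needed to recognize the subtracted term $vv^{*}$ — and, second, in the multilinear case, checking that $\mathscr{D}$ contains every element occurring in the computation and that $\Phi|_{\mathscr{D}}$ really is a unital multilinear map with commutative domain components, so that \cite[Proposition 12]{shon} applies; this is the only point where normality of $\textbf{A}$ enters.
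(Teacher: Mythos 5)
Your proof is correct and follows essentially the same route as the paper: a Gram-type positive $(2k+1)\times(2k+1)$ (resp.\ $(k+1)\times(k+1)$) matrix built from the embedded elements and $\textbf{I}$, the amplification of $\Phi$, and a Schur complement via Lemma~\ref{epsilpo} against the corner $\Phi(\textbf{I})$ (the paper pads with $k\times k$ blocks $Z$, $W$ having zero columns and a direct summand $\Phi_{k-1}(I)$, but this is only cosmetic bookkeeping). Your one genuine deviation is in the normal case of (ii), where you restrict $\Phi$ to $\bigoplus_{l}C^*(A_l,I_l)$ rather than to $C^*(\textbf{A},\textbf{I})$ as the paper does; this is in fact the more careful choice, since the embedded elements $(I_1,\ldots,A_i,\ldots,I_k)$ used in the Gram matrix need not lie in $C^*(\textbf{A},\textbf{I})$.
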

 \begin{proof}
 Let
 {\small \[{Z}= \begin{bmatrix}
 (A_1^*,I_2,\ldots,I_k) & 0 & \cdots & 0 \\ (I_1,A_2^*,\ldots,I_k) & 0 & \cdots & 0 \\ \vdots &\vdots & \cdots & 0 \\ (I_1, I_2,\ldots,A_k^*) & 0 & \cdots & 0
 \end{bmatrix}_{k\times k},\quad
 {W}= \begin{bmatrix}
 (B_1^*,I_2,\ldots,I_k) & 0 & \cdots & 0 \\ (I_1,B_2^*,\ldots,I_k) & 0 & \cdots & 0 \\ \vdots &\vdots & \cdots & 0 \\ (I_1, I_2,\ldots,B_k^*) & 0 & \cdots & 0
 \end{bmatrix}_{k\times k} \]}
 and
{\small
 \[\mathfrak{z}= \begin{bmatrix}
 (A_1^*,I_2,\ldots,I_k) \\ (I_1,A_2^*,\ldots,I_k) \\ \vdots \\ (I_1, I_2,\ldots,A_k^*)
 \end{bmatrix}_{k\times 1},\quad
 \mathfrak{w}= \begin{bmatrix}
 (B_1^*,I_2,\ldots,I_k) \\ (I_1,B_2^*,\ldots,I_k) \\ \vdots \\ (I_1, I_2,\ldots,B_k^*)
 \end{bmatrix}_{k\times 1}. \]}
 Then we have
 {\small
\begin{align}\label{add1}
\nonumber \begin{bmatrix} Z & & & \textbf{0}_{k\times 1}\\ W & & & \textbf{0}_{k\times 1} \\\textbf{ I} & 0 & \cdots & 0
 \end{bmatrix}_{(2k+1)\times(2k+1)}& \times \begin{bmatrix} Z^* &W^* &\textbf{I} \\
 & & 0
 \\ & & \vdots\\
\textbf{0}_{1\times k} & \textbf{0}_{1\times k} & 0
 \end{bmatrix}_{(2k+1)\times(2k+1)}
\\ &= \begin{bmatrix}
 ZZ^* & ZW^* & \mathfrak{z} \\ WZ^* & WW^* &
 \mathfrak{w} \\ \mathfrak{z}^* & \mathfrak{w}^* & \textbf{I}
 \end{bmatrix}_{(2k+1)\times(2k+1)}
 \\ \nonumber & \geq 0.
\end{align}}
If we put
 \[ {I}_{n\times n}=
 \begin{bmatrix}
\textbf{I} & 0 & \cdots & 0 \\
0 &\textbf{I} & \cdots & 0
\\ \vdots & 0 & \ddots &0
 \\ 0& 0& \cdots &\textbf{ I}
 \end{bmatrix}_{n\times n},
 \]
 then, by using the $(2k+1)$-positivity of $\Phi$ and according to the fact that $\Phi_{k-1}(\mathfrak{I}_{k-1})\geq 0$, we arrive at
 \begin{align} \label{ki4}
 \begin{bmatrix}
 \Phi_k(ZZ^*) & \Phi_k(ZW^*) & \Phi\left( \mathfrak{z} \right) \\
 \Phi_k(WZ^*) & \Phi_k(WW^*) & \Phi\left( \mathfrak{w} \right) \\
 \Phi\left(\mathfrak{z}^*\right)& \Phi\left( \mathfrak{w}^*\right) & \Phi(\mathbf{I})
 \end{bmatrix} \oplus \Phi_{k-1}({I}_{(k-1)\times (k-1)})
\geq 0,
\end{align}
in which by $\Phi\left( \mathfrak{z} \right)$ and $\Phi\left( \mathfrak{w} \right)$ we mean the column matrices
 $$
\Phi\left( \mathfrak{z} \right)=\begin{bmatrix}
 \Phi(A_1^*,I_2,\ldots,I_k)\\
 \vdots\\
 \Phi(I_1,I_2,\ldots,A_k^*)
 \end{bmatrix}_{k\times1},
 \quad\qquad
 \Phi\left( \mathfrak{w} \right)=\begin{bmatrix}
 \Phi(B_1^*,I_2,\ldots,I_k)\\
 \vdots\\
 \Phi(I_1,I_2,\ldots,B_k^*)
 \end{bmatrix}_{k\times1},$$
and $\Phi\left( \mathfrak{z}^* \right):=\Phi\left( \mathfrak{z} \right)^*$. Now note that the positive matrix appeared in \eqref{ki4} is just
 \[ \begin{bmatrix}
 \Phi_k(ZZ^*) & \Phi_k(ZW^*) & \Phi_k\left( Z \right) \\ \Phi_k(WZ^*) & \Phi_k(WW^*) & \Phi_k\left(W \right) \\ \Phi_k\left(Z^*\right)& \Phi_k\left(W^*\right) & \Phi_k({I}_{k\times k})
 \end{bmatrix}. \]
Applying Lemma \ref{epsilpo}, we reach
\begin{align}\label{cauchy}
\nonumber \begin{bmatrix}
 \Phi_k(ZZ^*) & \Phi_k(ZW^*) \\ \Phi_k(WZ^*) & \Phi_k(WW^*) \end{bmatrix}
 &\geq \begin{bmatrix}
 \Phi_k(Z)
 \\ \Phi_k(W)
 \end{bmatrix} \Phi_k({I}_{k\times k})^{-1}\begin{bmatrix}\Phi_k(Z^*) & \Phi_k(W^*)
 \end{bmatrix} \\ &= \begin{bmatrix} \Phi_k(Z) \Phi_k(Z^*) & \Phi_k(Z)\Phi_k(W^*)\\
 \Phi_k(W) \Phi_k(Z^*) & \Phi_k(W) \Phi_k(W^*)
 \end{bmatrix}.
\end{align}
Finally, the required inequality follows from \eqref{cauchy} and the fact that
\begin{align*} \begin{bmatrix}
{\rm ^pVar}_\Phi (\textbf{A}) & {\rm ^pCov}_\Phi (\textbf{A},\textbf{B}) \\
{\rm ^pCov}_\Phi (\textbf{B},\textbf{A}) & {\rm ^pVar}_\Phi (\textbf{B})
\end{bmatrix}&=\begin{bmatrix}
 \Phi_k(ZZ^*) & \Phi_k(ZW^*) \\ \Phi_k(WZ^*) & \Phi_k(WW^*) \end{bmatrix}\\ & \ \ \ - \begin{bmatrix} \Phi_k(Z) \Phi_k(Z^*) & \Phi_k(Z)\Phi_k(W^*)\\
 \Phi_k(W) \Phi_k(Z^*) & \Phi_k(W) \Phi_k(W^*)
 \end{bmatrix},\end{align*}
which proves $(\mbox{i})$.

To achieve $(\mbox{ii})$, note that if $\textbf{A}=(A_1,\ldots, A_k)$ is a normal element, then the restriction of the multilinear map $\Phi : \bigoplus_{i=1}^k \mathscr{A}_i \to \mathscr{B} $ on $C^*(\textbf{A},\textbf{I})$, is a completely positive map (so $(2k+1)$-positive). Therefore, the assertion directly follows from the positivity of $^p\mathcal{V}\mathcal{C}_\Phi(\textbf{A},\textbf{B})$. Hence, we only need to prove the first part of $(\mbox{ii})$. Let $\textbf{A}=(A_1,\ldots, A_k), \textbf{B}=(0,\ldots,0) \in \bigoplus_{i=1}^k \mathscr{A}_i$.
Then with the notations as in the proof of part $(\mbox{i})$, the matrix \eqref{add1}, after deleting the zero rows and the zero columns, turns into the positive matrix
\[ \begin{bmatrix}
 ZZ^* & \mathfrak{z} \\ \mathfrak{z}^* & \textbf{I}
 \end{bmatrix}_{(k+1)\times(k+1)}.\]
 Using the $(k+1)$-positivity of $\Phi$ and applying a similar argument as in the proof of part~$(\mbox{i})$, we reach
 \[ \begin{bmatrix}
 \Phi_k(ZZ^*) & \Phi_k\left( Z \right) \\ \Phi_k\left(Z^*\right) & \Phi_k({I}_{k\times k})
 \end{bmatrix}\geq 0.\]
Applying Lemma \ref{epsilpo}, we obtain that
\[ 0\leq \Phi_k(ZZ^*) - \Phi_k\left( Z \right) \Phi_k\left(Z^*\right)= {\rm ^pVar}_\Phi(\textbf{A}).\]
 \end{proof}
 \begin{remark}
Note that in Theorem \ref{varcovar}, if we assume that $\Phi$ is $2k$-positive, which is indeed a rather strong condition, then the positivity of the partial variance is concluded. In fact, under the notations of the proof of Theorem \ref{varcovar}, ${\rm ^pVar}_\Phi(\textbf{A}) \geq 0$ if and only if the Choi type inequality $\Phi_{k}( Z Z^*) \geq \Phi_{k}( Z) \Phi_{k}( Z^*)$ holds. Hence, if $\Phi$ is $2k$-positive, then $\Phi_k$ is $2$-positive; therefore Choi's inequality \eqref{kadison} ensures that the assertion holds true.

 Similarly, by assuming $\Phi$ to be $4k$-positive, then the positivity of matrix $^p\mathcal{V}\mathcal{C}_\Phi(\textbf{A},\textbf{B})$ is equivalent to the validity $\Phi_{2k}( {H} {H}^*) \geq \Phi_{2k}( {H}) \Phi_{2k}( {H})^*$, which is just the Choi type inequality \eqref{kadison} for $\Phi_{2k}$, where
$ {H}:=\begin{bmatrix}
 Z \\
 W
 \end{bmatrix}_{2k \times 2k}$ and $Z$ and $W$ are $k\times k$ matrices appeared in the proof of Theorem \ref{varcovar}.
 \end{remark}

 We present an uncertainty type inequality that is not trivial when we consider the commuting observables corresponding to $$\textbf{A}=(I_1, \ldots, I_i, A, I_{i+1}, \ldots, I_k), \quad \textbf{B}=(I_1, \ldots, I_j, B, I_{j+1}, \ldots, I_k)\quad (i\neq j)$$ in composite systems.
 \begin{theorem}
Let $ \Phi: \bigoplus_{i=1}^k \mathscr{A}_i \to \mathscr{B}$ be a unital positive map between unital $C^*$-algebras satisfying the property $\mathscr{D}_\infty$ and let
{\small
$$ \textbf{A}=(I_1, \ldots, I_i, A, I_{i+1}, \ldots, I_k)\quad\mbox{and}\quad\textbf{B}=(I_1, \ldots, I_j, B, I_{j+1}, \ldots, I_k)$$}
be self-adjoint elements in $\bigoplus_{i=1}^k \mathscr{A}_i$ for some $i$ and $j$ with $1\leq i\neq j \leq k$. Then
{\small\begin{align*}
 \begin{bmatrix}
{\rm Var}_\Phi(\textbf{{A}}) & 0 &\Phi( \frac{1}{2}[{\textbf{A}}, {\textbf{C}}]) & 0\\
0 & {\rm Var}_\Phi({\textbf{B}}) &0 &\Phi( \frac{1}{2}[{\textbf{B}},{\textbf{D}}])
 \\
\Phi( \frac{1}{2}[{\textbf{C}},{\textbf{A}}]) & 0 & {\rm Var}_\Phi({\textbf{C}}) & 0 \\
0 &\Phi( \frac{1}{2}[{\textbf{D}}, {\textbf{B}}]) & 0 & {\rm Var}_\Phi({\textbf{D}})
\end{bmatrix} \geq 0
\end{align*}}
for all self-adjoint elements $ \textbf{C}=(I_1, \ldots, I_i, C, I_{i+1}, \ldots, I_k)$ and $\textbf{D}=(I_1, \ldots, I_j, D,\\ I_{j+1}, \ldots, I_k)$.
In particular, if $\Phi$ is a positive multilinear map satisfying the property $\mathscr{D}_\infty$, then the above inequality turns into
{\small\begin{align*}
 \begin{bmatrix}
{\rm Var}_\Phi(\textbf{{A}}) & 0 & \frac{1}{2}\Phi([{\textbf{A}}, {\textbf{C}}]) & 0\\
0 & {\rm Var}_\Phi({\textbf{B}}) &0 &\frac{1}{2}\Phi([{\textbf{B}},{\textbf{D}}])
 \\
\frac{1}{2}\Phi([{\textbf{C}},{\textbf{A}}]) & 0 & {\rm Var}_\Phi({\textbf{C}}) & 0 \\
0 & \frac{1}{2}\Phi([{\textbf{D}}, {\textbf{B}}]) & 0 & {\rm Var}_\Phi({\textbf{D}})
\end{bmatrix} \geq 0.
\end{align*}}
\end{theorem}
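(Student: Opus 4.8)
The plan is to recognise the $4\times 4$ matrix in the statement as a permutation of an orthogonal direct sum $M_1\oplus M_2$, where $M_1$ and $M_2$ are the $2\times 2$ uncertainty matrices furnished by Theorem~\ref{hiesenberg}(ii) for the pairs $(\textbf{A},\textbf{C})$ and $(\textbf{B},\textbf{D})$. The role of the hypothesis $i\neq j$ is precisely that $\textbf{A},\textbf{C}$ (both supported in the $i$-th coordinate) commute with $\textbf{B},\textbf{D}$ (both supported in the $j$-th coordinate), so every ``cross'' commutator is $0$ and no off-block entry survives; this is what makes the matrix block-diagonal after a relabelling of the coordinates.

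For the general statement I would regard $\Phi$ as a positive map on the single unital $C^*$-algebra $\bigoplus_{i=1}^{k}\mathscr{A}_i$ and note that property $\mathscr{D}_\infty$ entails $\mathscr{D}_m$ for every finite $m$. Applying Theorem~\ref{hiesenberg}(ii) (with $m=\infty$) first to $(\textbf{A},\textbf{C})$ and then to $(\textbf{B},\textbf{D})$ gives
\[
M_1=\begin{bmatrix}{\rm Var}_{\Phi}(\textbf{A}) & \Phi\!\left(\tfrac{1}{2}[\textbf{A},\textbf{C}]\right)\\ \Phi\!\left(\tfrac{1}{2}[\textbf{C},\textbf{A}]\right) & {\rm Var}_{\Phi}(\textbf{C})\end{bmatrix}\geq 0,\qquad M_2=\begin{bmatrix}{\rm Var}_{\Phi}(\textbf{B}) & \Phi\!\left(\tfrac{1}{2}[\textbf{B},\textbf{D}]\right)\\ \Phi\!\left(\tfrac{1}{2}[\textbf{D},\textbf{B}]\right) & {\rm Var}_{\Phi}(\textbf{D})\end{bmatrix}\geq 0,
\]
whence $M_1\oplus M_2\geq 0$. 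Conjugating $M_1\oplus M_2$ (whose coordinates are ordered $\textbf{A},\textbf{C},\textbf{B},\textbf{D}$) by the permutation matrix reordering them as $\textbf{A},\textbf{B},\textbf{C},\textbf{D}$ produces, by a direct check, exactly the matrix of the statement; since conjugation by a (scalar) unitary preserves positivity, the first assertion follows. Here one uses $\Phi(0)=0$ when invoking Theorem~\ref{hiesenberg}; this is automatic in the multilinear case below and should be read into the standing hypotheses otherwise.

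For the multilinear refinement I would pass to the linear slice maps $\Phi_{(i)}\colon\mathscr{A}_i\to\mathscr{B}$ and $\Phi_{(j)}\colon\mathscr{A}_j\to\mathscr{B}$ introduced earlier in the paper. Multilinearity gives $\Phi_{(i)}(0)=0$, ${\rm Var}_{\Phi}(\textbf{A})={\rm Var}_{\Phi_{(i)}}(A)$, and $\tfrac{1}{2}\Phi([\textbf{A},\textbf{C}])=\tfrac{1}{2}\bigl(\Phi(\textbf{A}\textbf{C})-\Phi(\textbf{C}\textbf{A})\bigr)=\Phi_{(i)}\!\left(\tfrac{1}{2}[A,C]\right)$ (and symmetrically for $j,B,D$), so the $2\times 2$ blocks of the second matrix are again Theorem~\ref{hiesenberg}(ii) matrices, this time for $\Phi_{(i)}$ and $\Phi_{(j)}$. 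It then remains to check that the slice maps inherit $\mathscr{D}_\infty$: from a decomposition $\Phi=\varphi_2\circ\varphi_1$ with $\varphi_1$ unital positive linear carrying the $l$-th summand into $C(\Omega_l)$ and $\varphi_2$ unital completely positive (Theorem~\ref{main22}), one gets $\Phi_{(i)}=(\varphi_2)_{(i)}\circ\varphi_1^{(i)}$, where $\varphi_1^{(i)}\colon\mathscr{A}_i\to C(\Omega_i)$ is unital positive linear and $(\varphi_2)_{(i)}\colon C(\Omega_i)\to\mathscr{B}$, a slice of the completely positive multilinear $\varphi_2$ at the unit, is still unital completely positive (test $n$-positivity against all-ones matrices). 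The argument of the preceding paragraph then applies verbatim to $\Phi_{(i)},\Phi_{(j)}$, and a permutation of rows and columns finishes the proof.

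The whole thing is bookkeeping once Theorem~\ref{hiesenberg}(ii) is in hand; the step deserving care is the multilinear reduction — verifying that the slice maps retain $\mathscr{D}_\infty$ and tracking the normalising constant, which is $\tfrac{1}{2}$ here (rather than the $\tfrac{1}{2^{k}}$ occurring when $\textbf{A},\textbf{B}$ carry nontrivial entries in every coordinate) precisely because each commutator $[\textbf{A},\textbf{C}]$, $[\textbf{B},\textbf{D}]$ lives in a single coordinate.
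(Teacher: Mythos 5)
Your proposal is correct, and it takes a genuinely different --- and much shorter --- route than the paper. You observe that the $4\times 4$ matrix is supported on the partition $\{1,3\}\cup\{2,4\}$ of its index set, hence is conjugate by a scalar permutation matrix to $M_1\oplus M_2$, where each block is precisely the $2\times 2$ uncertainty matrix of Theorem~\ref{hiesenberg}(ii) applied to $\Phi$ (viewed as a positive map on the single $C^*$-algebra $\bigoplus_{i=1}^k\mathscr{A}_i$) and the pairs $(\textbf{A},\textbf{C})$ and $(\textbf{B},\textbf{D})$; since $X\oplus Y\geq 0$ iff $X\geq 0$ and $Y\geq 0$, and permutation conjugation preserves positivity, the first assertion follows. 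The paper instead builds the partial variance--covariance matrix of $\textbf{E}=\textbf{A}\textbf{B}$ and $\textbf{F}=\textbf{C}\textbf{D}$ via Theorem~\ref{varcovar}, eliminates the unwanted off-diagonal entries by Schur-multiplying with the rank-one $\pm 1$ matrix ${H}$ and by a sign conjugation, and then pushes the resulting $\psi$-level matrix through $\varphi_4$ using inequality \eqref{varicomp2}. Your route renders all of that machinery unnecessary and also exposes that the hypothesis $i\neq j$ plays no role in the positivity statement itself (only in its interpretation as compatible observables). Two caveats, neither fatal. First, Theorem~\ref{hiesenberg} requires $\Phi(0)=0$, which the present statement omits; you flag this correctly, and the paper's own proof needs it just as much (inequality \eqref{varicomp2} rests on superadditivity of the outer factor), so it is a shared implicit hypothesis rather than a defect of your argument. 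Second, for nonlinear $\Phi$ the entry $\Phi\left(\frac{1}{2}[\textbf{A},\textbf{C}]\right)$ is ambiguous: your direct-sum argument yields the literal commutator $(0,\ldots,\frac{1}{2}[A,C],\ldots,0)$ of $\bigoplus_{i=1}^k\mathscr{A}_i$, whereas the paper's computation yields the slice element $(I_1,\ldots,\frac{1}{2}[A,C],\ldots,I_k)$; these agree only in the (multi)linear setting, so you prove the statement as literally written while the paper proves the slice variant. Relatedly, your treatment of the multilinear refinement is heavier than needed: once the first matrix is positive, the second is the same matrix rewritten using linearity of the slice $\Phi_{(i)}$, so you need not verify that the slices inherit $\mathscr{D}_\infty$ --- which is just as well, since Definition~\ref{decom} alone does not guarantee that the decomposition respects the direct summands (the explicit decomposition of Theorem~\ref{main22} does, but that theorem assumes $\Phi$ tracial and multilinear).
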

\begin{proof}
Let $\psi: \bigoplus_{i=1}^k \mathscr{A}_i \to C(\mathcal{X})$ and $\varphi :C(\mathcal{X}) \to \mathscr{B}$ be the completely positive linear and completely positive maps decomposing $\Phi$ as mentioned in Definition \ref{decom}.
Set $\textbf{E}=\textbf{A} \textbf{B}$ and $\textbf{F}=\textbf{C} \textbf{D}$.
After deleting the zero rows and the zero columns of the matrix $^p\mathcal{V}\mathcal{C}_\psi(\textbf{E},\textbf{F})$, it remains the following principal submatrix of $^p\mathcal{V}\mathcal{C}_\psi(\textbf{E},\textbf{F})$
\[
\begin{bmatrix}
{X} & {Y} \\ {Y}^* & {Z}
\end{bmatrix}_{4\times 4}=:{S},
\]
whence
{\small
\[ {X}= \begin{bmatrix}
\psi_{(i)}(A^2) - \psi_{(i)}(A)^2 & \psi_{(i,j)}(A, B) - \psi_{(i)}(A) \psi_{(j)}(B)\\
 \psi_{(i,j)}(A, B) - \psi_{(j)}(B) \psi_{(i)}(A) &\psi_{(j)}(B^2) - \psi_{(j)}(B)^2
\end{bmatrix}, \]

\[ {Y}=\begin{bmatrix}
\psi_{(i)}(AC)- \psi_{(i)}(A)\psi_{(i)}(C) & \psi_{(i,j)}(A, D) - \psi_{(i)}(A) \psi_{(j)}(D) \\
\psi_{(i,j)}(C, B) - \psi_{(j)}(B) \psi_{(i)}(C) & \psi_{(j)}(BD)- \psi_{(j)}(B)\psi_{(j)}(D)
\end{bmatrix},\]}
 and
{\small
 \[ {Z}=\begin{bmatrix}
 \psi_{(i)}(C^2) - \psi_{(i)}(C)^2 & \psi_{(i,j)}(C, D) - \psi_{(i)}(C) \psi_{(j)}(D) \\ \psi_{(i,j)}(C,D)- \psi_{(j)}(D)\psi_{(i)}(C) & \psi_{(i)}(D^2) - \psi_{(i)}(D)^2
\end{bmatrix}. \]}
Since $\psi$ is completely positive, Theorem \ref{varcovar} implies that $^p\mathcal{V}\mathcal{C}_\psi(\textbf{E},\textbf{F}) \geq 0$, which ensures the positivity of ${S}$.
Put
{\small
\[{H}= \begin{bmatrix}
1 & -1 & 1 & -1 \\
-1 & 1 & -1 & 1 \\
1 & -1 & 1 & -1 \\
-1 & 1 & -1 & 1 \\
\end{bmatrix}= \begin{bmatrix}
1 \\ -1 \\ 1 \\ -1
\end{bmatrix}\begin{bmatrix}
1 & -1 & 1 & -1
\end{bmatrix}.\]}
Then ${H}$ is a positive matrix. Since the range of $\psi$ is commutative, we have
 \[
 {H} \circ {S} \geq 0,
 \]
 where ``$\circ$'' denotes the Schur product.
Both of two matrices ${H} \circ {S}$ and ${S}$ are positive. Therefore,
\[0\leq {H} \circ {S}+ {S}=\begin{bmatrix}
 {X}' & {Y}' \\ {Y}'^* & {Z}'
\end{bmatrix}_{4\times 4},\]
where
{\small
\[ {X}'= \begin{bmatrix}
2(\psi_{(i)}(A^2) - \psi_{(i)}(A)^2) & 0\\
0 & 2(\psi_{(j)}(B^2) - \psi_{(j)}(B)^2)
\end{bmatrix}, \]

\[ {Y}'=\begin{bmatrix}
2(\psi_{(i)}(AC)- \psi_{(i)}(A)\psi_{(i)}(C)) & 0 \\
0 & 2(\psi_{(j)}(BD)- \psi_{(j)}(B)\psi_{(j)}(D))
\end{bmatrix},\]}
 and
{\small
 \[ {Z}'=\begin{bmatrix}
2( \psi_{(i)}(C^2) - \psi_{(i)}(C)^2) & 0 \\ 0 & 2(\psi_{(i)}(D^2) - \psi_{(i)}(D)^2)
\end{bmatrix}. \]}
From the commutativity of the range of $\psi$ and the fact that ${Y}'$ is a diagonal matrix, we deduce that $\begin{bmatrix}
 {X}' & {Y}'^*\\ {Y}'& {Z}'
\end{bmatrix}_{4\times 4}\geq 0$. Furthermore,
 \[ \begin{bmatrix}
 {X}' & - {Y}'^*\\ - {Y}' &{Z}'
\end{bmatrix}_{4\times 4}= \begin{bmatrix}
1 \\ 1 \\ -1 \\ -1
\end{bmatrix}\begin{bmatrix}
 {X}' & {Y}'^*\\ {Y}' & {Z}'
\end{bmatrix}_{4\times 4}\begin{bmatrix}
1 & 1 & -1 & -1
\end{bmatrix}\geq 0.\]
Hence,
\begin{footnotesize}\begin{align}
 0 &\leq\begin{bmatrix}\label{matrixenddd}
 {X}' & {Y}' \\ {Y}'^* & {Z}'
\end{bmatrix}_{4\times 4} + \begin{bmatrix}
 {X}' & - {Y}'^*\\ - {Y}' & \mathfrak{Z}'
\end{bmatrix}_{4\times 4}
\\ \nonumber &= \begin{bmatrix}
4(\psi_{(i)}(A^2) - \psi_{(i)}(A)^2) & 0 & 2\psi_{(i)}([A,C]) &0\\
0 & 2(\psi_{(j)}(B^2) - \psi_{(j)}(B)^2) & 0 & 2\psi_{(j)}([B,D])\\
2\psi_{(i)}([C,A]) &0 & 4( \psi_{(i)}(C^2) - \psi_{(i)}(C)^2) & 0 \\
0 & 2\psi_{(j)}([D,B]) & 0 & 4(\psi_{(i)}(D^2) - \psi_{(i)}(D)^2)
\end{bmatrix} \\ \tag{since $\psi$ is linear and its range is commutative}
 \\ & =
 \begin{bmatrix}\label{matrixend}
4 {\rm Var}_\psi(\textbf{{A}}) & 0 & 2\psi([{\textbf{A}}, {\textbf{C}}]) & 0\\
0 & 4{\rm Var}_\psi({\textbf{B}}) &0 &2\psi([{\textbf{B}},{\textbf{D}}])
 \\
2\psi([{\textbf{C}},{\textbf{A}}]) & 0 & 4{\rm Var}_\psi({\textbf{C}}) & 0 \\
0 & 2 \psi([{\textbf{D}}, {\textbf{B}}]) & 0 & 4 {\rm Var}_\psi({\textbf{D}})
\end{bmatrix}.
\end{align}\end{footnotesize}
Utilizing a similar argument as in the proof of inequality \eqref{varicomp}, it can be easily shown that
\begin{equation}\label{varicomp2}
\varphi \left({\rm Var}_{\psi}( \textbf{X})\right) \leq {\rm Var}_{\Phi}\left(\textbf{X}\right)
\end{equation}
for every $\textbf{X}\in \bigoplus_{i=1}^k \mathscr{A}_i$.
Hence, by applying the positivity of matrix \eqref{matrixend} and using the complete positivity of $\varphi$, we see that
{\small\begin{align*}
0 &\leq \varphi_4\left( \begin{bmatrix}
{\rm Var}_\psi(\textbf{{A}}) & 0 & \psi(\frac{1}{2}[{\textbf{A}}, {\textbf{C}}]) & 0\\
0 & {\rm Var}_\psi({\textbf{B}}) &0 &\psi(\frac{1}{2}[{\textbf{B}},{\textbf{D}}])
 \\
\psi(\frac{1}{2}[{\textbf{C}},{\textbf{A}}]) & 0 & {\rm Var}_\psi({\textbf{C}}) & 0 \\
0 & \psi(\frac{1}{2}[{\textbf{D}}, {\textbf{B}}]) & 0 & {\rm Var}_\psi({\textbf{D}})
\end{bmatrix}\right)\\ &=
\begin{bmatrix}
 \varphi\left({\rm Var}_\psi(\textbf{{A}})\right) & 0 & \Phi \left(\frac{1}{2}[{\textbf{A}}, {\textbf{C}}]\right) & 0\\
0 & \varphi\left({\rm Var}_\psi({\textbf{B}})\right) &0 &\Phi \left(\frac{1}{2}[{\textbf{B}},{\textbf{D}}]\right)
 \\ \Phi\left(\frac{1}{2}[{\textbf{C}},{\textbf{A}}]\right) & 0 & \varphi\left({\rm Var}_\psi(\textbf{{C}})\right) & 0 \\
0 & \Phi\left(\frac{1}{2}[{\textbf{D}}, {\textbf{B}}]\right) & 0 & \varphi\left({\rm Var}_\psi(\textbf{{D}})\right)
\end{bmatrix}
\\ & \leq \begin{bmatrix}
 {\rm Var}_\Phi(\textbf{{A}}) & 0 & \Phi \left(\frac{1}{2}[{\textbf{A}}, {\textbf{C}}]\right) & 0\\
0 & {\rm Var}_\Phi({\textbf{B}}) &0 &\Phi \left(\frac{1}{2}[{\textbf{B}},{\textbf{D}}]\right)
 \\ \Phi\left(\frac{1}{2}[{\textbf{C}},{\textbf{A}}]\right) & 0 & {\rm Var}_\Phi(\textbf{{C}}) & 0 \\
0 & \Phi\left(\frac{1}{2}[{\textbf{D}}, {\textbf{B}}]\right) & 0 & {\rm Var}_\Phi(\textbf{{D}})
\end{bmatrix}, \tag{by inequality $\eqref{varicomp2}$}
\end{align*}}
which proves the first inequality. If $\Phi$ is a multilinear map, then the maps $\Phi_{(i)}$ and $\Phi_{(j)}$ are linear, and the positivity of matrix \eqref{matrixenddd} implies that
\begin{footnotesize}
\[
 \begin{bmatrix}\label{matrixend0}
\psi_{(i)}(A^2) - \psi_{(i)}(A)^2 & 0 & \psi_{(i)}(\frac{AC-CA}{2}) &0\\
0 & \psi_{(j)}(B^2) - \psi_{(j)}(B)^2 & 0 & \psi_{(j)}\left(\frac{BD-DB}{2}\right)\\
\psi_{(i)}(\frac{CA-AC}{2}) &0 & \psi_{(i)}(C^2) - \psi_{(i)}(C)^2 & 0 \\
0 & \psi_{(j)}(\frac{DB-BD}{2}) & 0 & \psi_{(i)}(D^2) - \psi_{(i)}(D)^2
\end{bmatrix} \geq 0.
\]\end{footnotesize}
Put $\textbf{X}=( I_1, \ldots, I_{s-1}, {X}, I_s, \ldots, I_k)$. Since $\varphi_2$ is multilinear, we have
{\small\begin{align*}
\varphi_2\circ \psi_{(s)}(\frac{X}{2})&=\varphi_2\circ \psi( I_1, \ldots, I_{s-1}, \frac{X}{2}, I_s, \ldots, I_k)\\
&= \frac{1}{2} \Phi( I_1, \ldots, I_{s-1}, {X}, I_s, \ldots, I_k)=\frac{1}{2} \Phi(\textbf{X}).
\end{align*}}
Now, by the same reasoning as in the first part, we reach the second assertion.
\end{proof}
The positivity of the second matrix in the above theorem immediately gives the following corollary.
\begin{corollary}\label{enddd}
Let $ \Phi: \bigoplus_{i=1}^k \mathscr{A}_i \to \mathscr{B}$ be a multilinear map between $C^*$-algebras satisfying the property $\mathscr{D}_\infty$. Let
$$ \textbf{A}=(I_1, \ldots, I_i, A, I_{i+1}, \ldots, I_k),\qquad \textbf{C}=(I_1, \ldots, I_i, C, I_{i+1}, \ldots, I_k)$$
and
$$\textbf{B}=(I_1, \ldots, I_j, B, I_{j+1}, \ldots, I_k),\qquad \textbf{D}=(I_1, \ldots, I_j, D, I_{j+1}, \ldots, I_k)$$
be self-adjoint operators in $\bigoplus_{i=1}^k \mathscr{A}_i$ with $1\leq i\neq j \leq k$. Then
\begin{enumerate}
\item
 {\small\[ \|{\rm Var}_\Phi(\textbf{{A}})\| \|{\rm Var}_\Phi({\textbf{B}})\| \|{\rm Var}_\Phi({\textbf{C}})\| \|{\rm Var}_\Phi({\textbf{D}})\| \geq \frac{1}{16} \left\|\Phi([{\textbf{A}}, {\textbf{C}}])\right\|^2 \left\|\Phi([{\textbf{B}},{\textbf{D}}])\right\|^2,\]}
 \item if the range of $\Phi$ is commutative, then
{\small
\[ {\rm Var}_\Phi(\textbf{{A}}) {\rm Var}_\Phi({\textbf{B}}) {\rm Var}_\Phi({\textbf{C}}) {\rm Var}_\Phi({\textbf{D}}) \geq \frac{1}{16} \left|\Phi([{\textbf{A}}, {\textbf{C}}])\right|^2 \left|\Phi([{\textbf{B}},{\textbf{D}}])\right|^2.\]}
\end{enumerate}
\end{corollary}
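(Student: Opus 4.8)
The plan is to read off both inequalities from the positivity of the second $4\times 4$ operator matrix produced by the preceding theorem, which I denote by $\mathcal{M}$. Since the only nonzero entries of $\mathcal{M}$ lie in the positions indexed by $\{1,3\}$ and by $\{2,4\}$, the two $2\times 2$ principal submatrices
\[
\mathcal{M}_1 := \begin{bmatrix} {\rm Var}_\Phi(\textbf{A}) & \frac{1}{2}\Phi([\textbf{A},\textbf{C}]) \\ \frac{1}{2}\Phi([\textbf{C},\textbf{A}]) & {\rm Var}_\Phi(\textbf{C}) \end{bmatrix}, \qquad \mathcal{M}_2 := \begin{bmatrix} {\rm Var}_\Phi(\textbf{B}) & \frac{1}{2}\Phi([\textbf{B},\textbf{D}]) \\ \frac{1}{2}\Phi([\textbf{D},\textbf{B}]) & {\rm Var}_\Phi(\textbf{D}) \end{bmatrix}
\]
are positive in $M_2(\mathscr{B})$. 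Being positive, $\mathcal{M}_1$ and $\mathcal{M}_2$ are self-adjoint, so $\Phi([\textbf{C},\textbf{A}]) = \Phi([\textbf{A},\textbf{C}])^*$ and $\Phi([\textbf{D},\textbf{B}]) = \Phi([\textbf{B},\textbf{D}])^*$; in particular $\Phi([\textbf{C},\textbf{A}])\Phi([\textbf{A},\textbf{C}]) = |\Phi([\textbf{A},\textbf{C}])|^2$ has norm $\|\Phi([\textbf{A},\textbf{C}])\|^2$, and similarly for $\textbf{B}$ and $\textbf{D}$.

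For part (1), I would first add the positive matrix $(\|{\rm Var}_\Phi(\textbf{A})\|\,I - {\rm Var}_\Phi(\textbf{A}))\oplus 0$ to $\mathcal{M}_1$ so as to replace its $(1,1)$-entry by $\|{\rm Var}_\Phi(\textbf{A})\|\,I$, and then apply Lemma \ref{epsilpo} to the resulting positive matrix; this yields ${\rm Var}_\Phi(\textbf{C}) \geq \frac{1}{4(\|{\rm Var}_\Phi(\textbf{A})\|+\varepsilon)}\,|\Phi([\textbf{A},\textbf{C}])|^2$ for every $\varepsilon>0$. Taking norms and letting $\varepsilon\downarrow 0$ gives $\|{\rm Var}_\Phi(\textbf{A})\|\,\|{\rm Var}_\Phi(\textbf{C})\| \geq \frac{1}{4}\|\Phi([\textbf{A},\textbf{C}])\|^2$ whenever $\|{\rm Var}_\Phi(\textbf{A})\|\neq 0$; if $\|{\rm Var}_\Phi(\textbf{A})\| = 0$ then ${\rm Var}_\Phi(\textbf{A}) = 0$ and Lemma \ref{epsilpo} applied to $\mathcal{M}_1$ forces $\Phi([\textbf{A},\textbf{C}]) = 0$, so the inequality is trivial. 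Running the same argument with $\mathcal{M}_2$ gives $\|{\rm Var}_\Phi(\textbf{B})\|\,\|{\rm Var}_\Phi(\textbf{D})\| \geq \frac{1}{4}\|\Phi([\textbf{B},\textbf{D}])\|^2$, and multiplying the two scalar inequalities gives (1).

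For part (2), when the range of $\Phi$ is commutative I would identify it with some $C(\mathcal{X})$ and use that a $2\times2$ matrix over a commutative $C^*$-algebra with positive diagonal entries is positive exactly when its (pointwise) determinant is positive; applied to $\mathcal{M}_1$ and $\mathcal{M}_2$ this gives ${\rm Var}_\Phi(\textbf{A})\,{\rm Var}_\Phi(\textbf{C}) \geq \frac{1}{4}|\Phi([\textbf{A},\textbf{C}])|^2$ and ${\rm Var}_\Phi(\textbf{B})\,{\rm Var}_\Phi(\textbf{D}) \geq \frac{1}{4}|\Phi([\textbf{B},\textbf{D}])|^2$, with all six elements lying in the commutative range. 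Multiplying these two operator inequalities is legitimate because for commuting positive elements $a\geq b\geq 0$ and $c\geq d\geq 0$ one has $ac-bd = (a-b)c + b(c-d)\geq 0$, and this gives (2). The whole argument is essentially bookkeeping once the $4\times4$ positivity is available; the only points that need a little care are the passage from an operator inequality to a scalar one in (1) via the $\varepsilon$-regularized form of Lemma \ref{epsilpo} together with the separate treatment of a vanishing variance, and the observation, coming from $\mathcal{M}_1$ and $\mathcal{M}_2$ being Hermitian, that $\Phi([\textbf{C},\textbf{A}]) = \Phi([\textbf{A},\textbf{C}])^*$.
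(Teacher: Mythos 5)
Your proposal is correct and follows exactly the route the paper intends: the paper's entire proof is the remark that the corollary is immediate from the positivity of the second $4\times 4$ matrix in the preceding theorem, and you have simply (and correctly) filled in the details by compressing to the $\{1,3\}$ and $\{2,4\}$ principal $2\times 2$ blocks, using Lemma \ref{epsilpo} with the norm bound on the $(1,1)$ entry for part (1), and the pointwise determinant criterion in $M_2(C(\mathcal{X}))$ together with multiplication of commuting positive inequalities for part (2). The only care points you flag (the $\varepsilon$-regularization, the vanishing-variance case, and $\Phi([\textbf{C},\textbf{A}])=\Phi([\textbf{A},\textbf{C}])^*$ from Hermiticity) are handled correctly.
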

It follows from \cite[Theorem 3.1]{shar} that if $\Phi: \mathscr{A}\to \mathscr{B}$ is a positive linear map between $C^*$-algebras, then there is an upper bound for the variance as the following form:
\[ {\rm Var}_\Phi(X) \leq \inf_{\alpha \in \mathbb{C}} \|{X}-\alpha I\|^2 \qquad (X\in \mathscr{A}). \]
Considering the map $\Psi_{(s)}$, it is easy to check that for every $\textbf{X}=( I_1, \ldots,\\ I_{s-1}, {X}, I_{s+1}, \ldots, I_k)\in \bigoplus_{i=1}^k \mathscr{A}_i$ and any unital positive multilinear map $\Psi: \bigoplus_{i=1}^k \mathscr{A}_i\to \mathscr{B}$ between unital $C^*$-algebras, it holds that
\[ {\rm Var}_\Psi(\textbf{X}) \leq \inf_{\alpha \in \mathbb{C}} \|{X}-\alpha I_s\|^2. \]
In virtue of \cite[Theorem 4 and Corollary 1]{der}, if $X$ is a normal element, then $\inf_{\alpha \in \mathbb{C}} \|X-\alpha I\|$ is equal to the radius of the smallest disk containing the spectrum of $X$. In particular, if $X$ is a positive element or $X$ is a positive contraction, then $\inf_{\alpha \in \mathbb{C}} \|X-\alpha I\|\leq \|X\| $ or $\inf_{\alpha \in \mathbb{C}} \|X-\alpha I\|\leq \frac{1}{2}$, respectively. \\
Thus, from Corollary \ref{enddd} and the correspondence between completely positive multilinear maps and completely positive linear maps on tensor product spaces \cite[Corollary 1.3]{HIAI}, the following conclusion can be expressed for an uncertainty relation of two commuting observables.
\begin{corollary}
Let $ \Phi: \mathscr{A} \bigotimes \mathscr{B} \to \mathscr{C}$ be a completely positive linear map between $C^*$-algebras with the commutative range and let $ A\otimes I_\mathscr{B}$ and $I_\mathscr{A}\otimes B $ be compatible observables in the composite system $ \mathscr{A} \bigotimes \mathscr{B}$ (with respect to the projective tensor product). Then the following statements hold:
\begin{enumerate}
\item For all self-adjoint elements $ C\in \mathscr{A}$ and $D\in\mathscr{B}$ and all $\alpha, \beta \in \mathbb{C}$ with nonzero $ \|C-\alpha I_\mathscr{A}\|$ and $ \|D-\beta I_\mathscr{B}\| $, it holds that
{\small
\[ {\rm Var}_\Phi(A\otimes I_\mathscr{B}) {\rm Var}_\Phi(I_\mathscr{A}\otimes B) \geq \frac{1}{16} \frac{ \left|\Phi([A, C]) \Phi([B, D])\right|^2}{(\|C-\alpha I_\mathscr{A}\| \|D-\beta I_\mathscr{B}\|)^2 }.\]}
\item If $C$ and $D$ are positive contractions, then the above inequality turns into
{\small
\[ {\rm Var}_\Phi(A\otimes I_\mathscr{B}) {\rm Var}_\Phi(I_\mathscr{A}\otimes B) \geq \left|\Phi([A, C])\right|^2 \left|\Phi([B, D])\right|^2.\]}
\end{enumerate}
\end{corollary}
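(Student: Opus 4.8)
The plan is to read the inequality off the four-observable uncertainty relation in Corollary~\ref{enddd} after transporting $\Phi$ to a multilinear map by Hiai's tensor-product correspondence, and then to trade two of the four variances on the left for the norms on the right via the bound ${\rm Var}_\Phi(X)\le\inf_{\gamma\in\mathbb{C}}\|X-\gamma I\|^{2}$ of \cite[Theorem 3.1]{shar}. First I would use \cite[Corollary 1.3]{HIAI} to pass from the completely positive linear map $\Phi:\mathscr{A}\bigotimes\mathscr{B}\to\mathscr{C}$ (projective $C^*$-tensor product) to the completely positive bilinear map $\Psi:\mathscr{A}\bigoplus\mathscr{B}\to\mathscr{C}$, $\Psi(X,Y)=\Phi(X\otimes Y)$, whose range coincides with the (commutative) range of $\Phi$; thus $\Psi$ is within the scope of Corollary~\ref{enddd}, the property $\mathscr{D}_\infty$ being available because a positive map with commutative range factors as the corestriction onto the commutative $C^*$-subalgebra $C(\Omega)\subseteq\mathscr{C}$ generated by its range followed by the completely positive inclusion $C(\Omega)\hookrightarrow\mathscr{C}$. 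Applying Corollary~\ref{enddd}(2) to $\Psi$ in the case $k=2$ with slots $i=1$, $j=2$ and the tuples $\textbf{A}=(A,I_\mathscr{B})$, $\textbf{C}=(C,I_\mathscr{B})$, $\textbf{B}=(I_\mathscr{A},B)$, $\textbf{D}=(I_\mathscr{A},D)$, and noting that $\Psi(X,I_\mathscr{B})=\Phi(X\otimes I_\mathscr{B})$ and $\Psi(I_\mathscr{A},Y)=\Phi(I_\mathscr{A}\otimes Y)$ force ${\rm Var}_\Psi(\textbf{A})={\rm Var}_\Phi(A\otimes I_\mathscr{B})$ (and cyclically for $\textbf{B},\textbf{C},\textbf{D}$) and that the relevant off-diagonal entries are $\Phi([A\otimes I_\mathscr{B},C\otimes I_\mathscr{B}])=\Phi([A,C]\otimes I_\mathscr{B})$ and $\Phi(I_\mathscr{A}\otimes[B,D])$, one obtains
\[
{\rm Var}_\Phi(A\otimes I_\mathscr{B})\,{\rm Var}_\Phi(I_\mathscr{A}\otimes B)\,{\rm Var}_\Phi(C\otimes I_\mathscr{B})\,{\rm Var}_\Phi(I_\mathscr{A}\otimes D)\ \ge\ \tfrac1{16}\,\bigl|\Phi([A,C]\otimes I_\mathscr{B})\bigr|^{2}\bigl|\Phi(I_\mathscr{A}\otimes[B,D])\bigr|^{2}.
\]

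Next, since $\Phi$ is a positive linear map, \cite[Theorem 3.1]{shar} gives ${\rm Var}_\Phi(C\otimes I_\mathscr{B})\le\inf_{\gamma}\|C\otimes I_\mathscr{B}-\gamma(I_\mathscr{A}\otimes I_\mathscr{B})\|^{2}=\inf_{\gamma}\|C-\gamma I_\mathscr{A}\|^{2}\le\|C-\alpha I_\mathscr{A}\|^{2}$, and similarly ${\rm Var}_\Phi(I_\mathscr{A}\otimes D)\le\|D-\beta I_\mathscr{B}\|^{2}$. All the operators in the preceding display lie in the commutative algebra $C(\Omega)$, where multiplication is order preserving and $|fg|^{2}=|f|^{2}|g|^{2}$; hence, replacing the product ${\rm Var}_\Phi(C\otimes I_\mathscr{B})\,{\rm Var}_\Phi(I_\mathscr{A}\otimes D)$ by the scalar $\|C-\alpha I_\mathscr{A}\|^{2}\|D-\beta I_\mathscr{B}\|^{2}$ and dividing by this nonzero scalar yields statement (1).

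For statement (2), let $C,D$ be positive contractions. By \cite[Theorem 4 and Corollary 1]{der}, $\inf_{\gamma}\|C-\gamma I_\mathscr{A}\|$ and $\inf_{\gamma}\|D-\gamma I_\mathscr{B}\|$ equal the radii of the smallest disks containing the spectra of $C$ and $D$, which lie in $[0,1]$, so these infima are at most $\tfrac12$; if one of them is $0$ then $C$ or $D$ is a scalar, so $[A,C]=0$ or $[B,D]=0$, the right-hand side of (1) vanishes, and (2) is trivial; otherwise I choose $\alpha,\beta$ attaining the infima, so that $\|C-\alpha I_\mathscr{A}\|^{2}\|D-\beta I_\mathscr{B}\|^{2}\le\tfrac1{16}$, making the constant $\tfrac1{16}\big/\bigl(\|C-\alpha I_\mathscr{A}\|^{2}\|D-\beta I_\mathscr{B}\|^{2}\bigr)$ in (1) at least $1$, which gives exactly (2). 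The only point requiring genuine care is the first step, namely verifying that the bilinear map $\Psi$ produced by the Hiai correspondence really falls under the hypotheses of Corollary~\ref{enddd} (in particular that commutative range supplies the property $\mathscr{D}_\infty$ in the multilinear sense) and that the dictionary ${\rm Var}_\Psi(\cdot)\leftrightarrow{\rm Var}_\Phi(\cdot)$, $\Phi([\textbf{A},\textbf{C}])\leftrightarrow\Phi([A,C]\otimes I_\mathscr{B})$ is the correct one; from there everything is the elementary bookkeeping above.
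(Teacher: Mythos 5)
Your proof is correct and takes essentially the same route as the paper: the paper also obtains this corollary by feeding the Hiai correspondence \cite[Corollary 1.3]{HIAI} between completely positive linear maps on the projective tensor product and completely positive bilinear maps into Corollary \ref{enddd}, and then trading the two extra variances for the bound ${\rm Var}_\Phi(X)\le\inf_{\gamma}\|X-\gamma I\|^{2}$ of \cite[Theorem 3.1]{shar}, with the Stampfli estimate $\inf_{\gamma}\|X-\gamma I\|\le\tfrac12$ for positive contractions from \cite{der} giving part (2). Your extra remarks (the factorization through the commutative subalgebra generated by the range to secure $\mathscr{D}_\infty$, and the degenerate case where the infimum vanishes) merely fill in details the paper leaves implicit.
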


\medskip

\section{Declarations}

\noindent \textit{Ethical Approval.} Ethical approval is not applicable to this article as the research is not related to human and/or animal studies.\\

\noindent \textit{Authors' contributions.} The initial idea of the work was given by M. S. Moslehian. All authors have participated in investigating and obtaining the results. Then, the first draft of the manuscript was written by A. Dadkhah and the final version was prepared by M. Kian. All authors commented on the final versions of the manuscript. The manuscript was edited and approved for submission by all authors.\\

\noindent \textit{Funding.} Funding information is not applicable yet.\\

\noindent \textit{Conflict of Interest Statement.} On behalf of all authors, the corresponding author states that there is no conflict of interest.\\

\noindent\textit{Data Availability Statement.} Data sharing not applicable to this article as no datasets were generated or analysed during the current study.

\medskip

\bibliographystyle{amsplain}

\end{document}